\definecolor{shadecolor}{gray}{0.875}
\definecolor{dblue}{rgb}{0,0,.6}
\newcommand{\mathds}[1]{{\mathbb #1}}
\numberwithin{equation}{section}
\begin{document}
%
%
%
\theoremstyle{definition}
\newtheorem{Definition}{Definition}[section]
\newtheorem*{Definitionx}{Definition}
\newtheorem{Convention}{Definition}[section]
\newtheorem{Construction}[Definition]{Construction}
\newtheorem{Example}[Definition]{Example}
\newtheorem{Exercise}[Definition]{Exercise}
\newtheorem{Examples}[Definition]{Examples}
\newtheorem{Remark}[Definition]{Remark}
\newtheorem*{Remarkx}{Remark}
\newtheorem{Remarks}[Definition]{Remarks}
\newtheorem{Caution}[Definition]{Caution}
\newtheorem{Conjecture}[Definition]{Conjecture}
\newtheorem*{Conjecturex}{Conjecture}
\newtheorem{Question}[Definition]{Question}
\newtheorem*{Questionx}{Question}
\newtheorem*{Acknowledgements}{Acknowledgements}
\newtheorem*{Notation}{Notation}
\newtheorem*{Organization}{Organization}
\newtheorem*{Disclaimer}{Disclaimer}
\theoremstyle{plain}
\newtheorem{Theorem}[Definition]{Theorem}
\newtheorem*{Theoremx}{Theorem}

\newtheorem{Proposition}[Definition]{Proposition}
\newtheorem*{Propositionx}{Proposition}
\newtheorem{Lemma}[Definition]{Lemma}
\newtheorem{Corollary}[Definition]{Corollary}
\newtheorem*{Corollaryx}{Corollary}
\newtheorem{Fact}[Definition]{Fact}
\newtheorem{Facts}[Definition]{Facts}
\newtheorem{Claim}[Definition]{Claim}
\newtheoremstyle{voiditstyle}{3pt}{3pt}{\itshape}{\parindent}%
{\bfseries}{.}{ }{\thmnote{#3}}%
\theoremstyle{voiditstyle}
\newtheorem*{VoidItalic}{}
\newtheoremstyle{voidromstyle}{3pt}{3pt}{\rm}{\parindent}%
{\bfseries}{.}{ }{\thmnote{#3}}%
\theoremstyle{voidromstyle}
\newtheorem*{VoidRoman}{}

\newenvironment{specialproof}[1][\proofname]{\noindent\textit{#1.} }{\qed\medskip}
\newcommand{\blowup}{\rule[-3mm]{0mm}{0mm}}
\newcommand{\cal}{\mathcal}
\newcommand{\Aff}{{\mathds{A}}}
\newcommand{\BB}{{\mathds{B}}}
\newcommand{\BC}{{\mathds{C}}}
\newcommand{\CC}{{\mathds{C}}}
\newcommand{\EE}{{\mathds{E}}}
\newcommand{\FF}{{\mathds{F}}}
\newcommand{\GG}{{\mathds{G}}}
\newcommand{\HH}{{\mathds{H}}}
\newcommand{\NN}{{\mathds{N}}}
\newcommand{\ZZ}{{\mathds{Z}}}
\newcommand{\PP}{{\mathds{P}}}
\newcommand{\QQ}{{\mathds{Q}}}
\newcommand{\RR}{{\mathds{R}}}
\newcommand{\BA}{{\mathds{A}}}
\newcommand{\D}{{\mathcal D}}
\newcommand{\LL}{{\mathcal L}}
\newcommand{\VV}{{\mathcal V}}
\newcommand{\WW}{{\mathcal W}}
\newcommand{\Liea}{{\mathfrak a}}
\newcommand{\Lieb}{{\mathfrak b}}
\newcommand{\Lieg}{{\mathfrak g}}
\newcommand{\Liem}{{\mathfrak m}}
\newcommand{\ideala}{{\mathfrak a}}
\newcommand{\idealb}{{\mathfrak b}}
\newcommand{\idealg}{{\mathfrak g}}
\newcommand{\idealm}{{\mathfrak m}}
\newcommand{\idealp}{{\mathfrak p}}
\newcommand{\idealq}{{\mathfrak q}}
\newcommand{\idealI}{{\cal I}}
\newcommand{\lin}{\sim}
\newcommand{\num}{\equiv}
\newcommand{\dual}{\ast}
\newcommand{\iso}{\cong}
\newcommand{\homeo}{\approx}
\newcommand{\mm}{{\mathfrak m}}
\newcommand{\pp}{{\mathfrak p}}
\newcommand{\qq}{{\mathfrak q}}
\newcommand{\rr}{{\mathfrak r}}
\newcommand{\pP}{{\mathfrak P}}
\newcommand{\qQ}{{\mathfrak Q}}
\newcommand{\rR}{{\mathfrak R}}
\newcommand{\OO}{{\cal O}}
\newcommand{\CO}{{\mathcal O}}
\newcommand{\numero}{{n$^{\rm o}\:$}}
\newcommand{\mf}[1]{\mathfrak{#1}}
\newcommand{\mc}[1]{\mathcal{#1}}
\newcommand{\into}{{\hookrightarrow}}
\newcommand{\onto}{{\twoheadrightarrow}}
\newcommand{\Spec}{{\rm Spec}\:}
\newcommand{\BigSpec}{{\rm\bf Spec}\:}
\newcommand{\Spf}{{\rm Spf}\:}
\newcommand{\Proj}{{\rm Proj}\:}
\newcommand{\Pic}{{\rm Pic }}
\newcommand{\MW}{{\rm MW }}
\newcommand{\Br}{{\rm Br}}
\newcommand{\NS}{{\rm NS}}
\newcommand{\Sym}{{\mathfrak S}}
\newcommand{\Aut}{{\rm Aut}}
\newcommand{\Autp}{{\rm Aut}^p}
\newcommand{\ord}{{\rm ord}}
\newcommand{\coker}{{\rm coker}\,}
\newcommand{\divisor}{{\rm div}}
\newcommand{\Def}{{\rm Def}}
\newcommand{\rank}{\mathop{\mathrm{rank}}\nolimits}
\newcommand{\Ext}{\mathop{\mathrm{Ext}}\nolimits}
\newcommand{\EXT}{\mathop{\mathscr{E}{\kern -2pt {xt}}}\nolimits}
\newcommand{\Hom}{\mathop{\mathrm{Hom}}\nolimits}
\newcommand{\Bk}{\mathop{\mathrm{Bk}}\nolimits}
\newcommand{\HOM}{\mathop{\mathscr{H}{\kern -3pt {om}}}\nolimits}
\newcommand{\calA}{\mathscr{A}}
\newcommand{\calC}{\mathscr{C}}
\newcommand{\calH}{\mathscr{H}}
\newcommand{\calL}{\mathscr{L}}
\newcommand{\calM}{\mathscr{M}}
\newcommand{\calN}{\mathscr{N}}
\newcommand{\calX}{\mathscr{X}}
\newcommand{\calK}{\mathscr{K}}
\newcommand{\calD}{\mathscr{D}}
\newcommand{\calY}{\mathscr{Y}}
\newcommand{\calF}{\mathscr{F}}
\newcommand{\calE}{\mathscr{E}}
\newcommand{\CN}{\mathcal{N}}
\newcommand{\CCC}{\mathcal{C}}
\newcommand{\CM}{\mathcal{M}}
\newcommand{\CQ}{\mathcal{Q}}

\newcommand{\chari}{\mathop{\mathrm{char}}\nolimits}
\newcommand{\ch}{\mathop{\mathrm{ch}}\nolimits}
\newcommand{\CH}{\mathop{\mathrm{CH}}\nolimits}
\newcommand{\supp}{\mathop{\mathrm{supp}}\nolimits}
\newcommand{\codim}{\mathop{\mathrm{codim}}\nolimits}
\newcommand{\td}{\mathop{\mathrm{td}}\nolimits}
\newcommand{\Span}{\mathop{\mathrm{Span}}\nolimits}
\newcommand{\Gal}{\mathop{\mathrm{Gal}}\nolimits}
\newcommand{\sym}{\mathop{\mathrm{Sym}}\nolimits}
\newcommand{\GL}{\mathop{\mathrm{GL}}\nolimits}
\newcommand{\SL}{\mathop{\mathrm{SL}}\nolimits}
\newcommand{\IM}{\mathop{\mathrm{Im}}\nolimits}
\newcommand{\piet}{{\pi_1^{\rm \acute{e}t}}}
\newcommand{\Het}[1]{{H_{\rm \acute{e}t}^{{#1}}}}
\newcommand{\Hfl}[1]{{H_{\rm fl}^{{#1}}}}
\newcommand{\Hcris}[1]{{H_{\rm cris}^{{#1}}}}
\newcommand{\HdR}[1]{{H_{\rm dR}^{{#1}}}}
\newcommand{\hdR}[1]{{h_{\rm dR}^{{#1}}}}
\newcommand{\loc}{{\rm loc}}
\newcommand{\et}{{\rm \acute{e}t}}
\newcommand{\defin}[1]{{\bf #1}}

\ifthenelse{\equal{1}{1}}{
\ifthenelse{\equal{2}{2}}{
\newcommand{\blue}[1]{{\color{blue}#1}}
\newcommand{\green}[1]{{\color{green}#1}}
\newcommand{\red}[1]{{\color{red}#1}}
\newcommand{\cyan}[1]{{\color{cyan}#1}}
\newcommand{\magenta}[1]{{\color{magenta}#1}}
\newcommand{\yellow}[1]{{\color{yellow}#1}} 
}{
\newcommand{\blue}[1]{#1}
\newcommand{\green}[1]{#1}
\newcommand{\red}[1]{#1}
\newcommand{\cyan}[1]{#1}
\newcommand{\magenta}[1]{#1}
\newcommand{\yellow}[1]{#1} 
}
}{
\newcommand{\blue}[1]{}
\newcommand{\green}[1]{}
\newcommand{\red}[1]{}
\newcommand{\cyan}[1]{}
\newcommand{\magenta}[1]{}
\newcommand{\yellow}[1]{} 
}

\renewcommand{\HH}{{\rm{H}}}

\title{Trigonal Canonically Fibered Surfaces}

\date{May 11, 2025}

\thanks{Xi Chen and Nathan Grieve are each partially supported by their respective Discovery Grants from the Natural Sciences and Engineering Research Council of Canada}

\author{Houari Benammar Ammar}
\address{D\'{e}partement de math\'{e}matiques, Universit\'{e} du Qu\'{e}bec \`a Montr\'{e}al,
Local PK-5151, 201 Avenue du Pr\'{e}sident-Kennedy,
Montr\'{e}al, QC, H2X 3Y7, Canada}
\email{benammar\_ammar.houari@courrier.uqam.ca}

\author{Xi Chen}
\address{Department of Mathematical and Statistical Sciences\\
University of Alberta\\
Edmonton, Alberta T6G 2G1, Canada}
\email{xichen@math.ualberta.ca}

\author{Nathan Grieve}
\address{Department of Mathematics \& Statistics,
Acadia University, Huggins Science Hall, Room 130
12 University Avenue
Wolfville, Nova Scotia, B4P 2R6
Canada; 
School of Mathematics and Statistics, 4302 Herzberg Laboratories, Carleton University, 1125 Colonel By Drive, Ottawa, ON, K1S 5B6, Canada; 
D\'{e}partement de math\'{e}matiques, Universit\'{e} du Qu\'{e}bec \`a Montr\'{e}al, Local PK-5151, 201 Avenue du Pr\'{e}sident-Kennedy, Montr\'{e}al, QC, H2X 3Y7, Canada;
Department of Pure Mathematics, University of Waterloo, 200 University Avenue West, Waterloo, ON, N2L 3G1, Canada}
\email{nathan.m.grieve@gmail.com}


\begin{abstract}
We fix some gaps of a proof of Xiao's conjecture on canonically fibered surfaces of relative genus $5$ by the second author. Our argument simplifies the original proof and gives a much better bound on the geometric genus of the surface. Also we apply the same argument to canonically fibered surfaces of relative genus $3$ and $4$ to obtain some Noether-type inequalities for these surfaces.
\end{abstract}

\maketitle
\tableofcontents

\section{Introduction}

A {\em canonically fibered surface} is a rational map $f: X\dashrightarrow C$ satisfying
\begin{enumerate}
\item[C1.]
$X$ and $C$ are irreducible, smooth and projective over $\BC$ of dimension
$\dim X = 2$ and $\dim C = 1$, respectively,
\item[C2.]
$f$ has connected fibers, $X$ is of general type and
\item[C3.]
$|K_X| = f^* \D + {\mathcal N}$ for a base point free (bpf) linear series $\D$ on $C$ of $\dim \D \ge 1$ and ${\mathcal N}$ the fixed part of $|K_X|$. 
\end{enumerate}

Usually, we resolve $f$ and further assume that

\begin{enumerate}
\item[C4.]
$f$ is regular and
\item[C5.]
$K_X$ is relatively nef over $C$ and we write $K_X$ as the sum of its moving part and fixed part
\begin{equation}\label{TRIGCFSE000}
K_X = f^* D + N
\end{equation}
in $\Pic(X)$, where $D\in \Pic(C)$ is an ample and base point free (bpf) divisor on $C$ of $\deg D = d$ and
$N$ is an effective divisor on $X$.
\end{enumerate}

Using the Miyaoka-Yau inequality, A. Beauville proved that the general fibers of $f$ are curves of genus $\le 5$
when the geometric genus $p_g(X) = h^0(K_X)$ of $X$ is sufficiently large \cite{Beauville:1979}. G. Xiao further conjectured \cite[Problem 6]{problem}:

\begin{Conjecture}[Xiao]\label{CONJXIAO}
Let $f: X\dashrightarrow C$ be a canonically fibered surface satisfying C1-C3.
Then $X_p = f^{-1}(p)$ is a curve of genus $\le 4$ for
a general point $p\in C$ when $p_g(X) \gg 1$.
\end{Conjecture}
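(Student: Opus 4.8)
The plan is to prove the conjecture by combining Beauville's bound with a case analysis of the relative canonical geometry. By Beauville's theorem the general fiber $F = X_p$ has genus $g \le 5$ once $p_g \gg 1$, so it suffices to show that a canonically fibered surface whose general fiber has genus exactly $5$ must have bounded $p_g$. Assume for contradiction that $g = 5$ and $p_g \gg 1$. First I would set up the relative canonical sheaf $\omega_{X/C} = K_X - f^* K_C = f^*(D - K_C) + N$ and the rank-$5$ bundle $E = f_* \omega_{X/C}$ on $C$. Since $f_* \mathcal{O}_X(K_X) = E \otimes K_C$, one has
\[ p_g = h^0(C, E \otimes K_C), \]
so, up to $h^1$ corrections, $\deg E$ grows linearly with $p_g$; in particular $p_g \gg 1$ forces $\deg E \gg 1$. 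On the other hand, from $K_X = f^* D + N$ one computes
\[ K_X^2 = 4d(g-1) + N^2, \qquad d = \deg D, \]
and the Miyaoka--Yau inequality gives the upper bound $K_X^2 \le 9\chi(\mathcal{O}_X)$. The naive slope inequality is not strong enough here (for $g = 5$ it leaves $p_g$ unconstrained), so the real task is to produce a sharper lower bound for $K_X^2$, equivalently to control the negativity of $N^2$, using the special geometry of a genus-$5$ fiber.

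The engine is the relative canonical embedding $X \dashrightarrow \PP(E)$, which restricts on a general fiber to the canonical map $F \to \PP^4$. I would stratify by the Clifford index (gonality) of $F$. If $F$ is Petri-general --- neither hyperelliptic, nor trigonal, nor a plane quintic --- then its canonical model is the complete intersection of three quadrics, and the relative version produces a rank-$3$ bundle of relative quadrics
\[ \mathcal{K} = \ker\bigl(\sym^2 E \longrightarrow f_* \omega_{X/C}^{\otimes 2}\bigr) \]
cutting out the relative canonical model. Applying adjunction on this relative complete intersection expresses $\omega_{X/C}$ in terms of the relative hyperplane class and the twists of the three quadrics; comparing with the canonical-fibration hypothesis (that the moving part of $|K_X|$ is exactly $f^*D$) pins down the splitting type of $E$ and yields a sharpened lower bound on $K_X^2$. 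Feeding this into Miyaoka--Yau bounds $p_g$ by a constant, contradicting $p_g \gg 1$.

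The decisive and hardest case --- and the one the title advertises --- is when $F$ is trigonal (with the hyperelliptic and plane-quintic fibers handled by similar but easier variants). Here the canonical curve is no longer cut out by quadrics: it lies on a two-dimensional rational normal scroll, and the three relative quadrics vanish on a relative scroll $W \to C$ rather than on $X$ itself. The relative $g^1_3$ assembles into a $\PP^1$-bundle $W \to C$ together with a generically finite degree-$3$ rational map $X \dashrightarrow W$. I would then invoke the structure theory of triple covers: the Tschirnhausen (trace-zero) bundle and branch data express $\chi(\mathcal{O}_X)$, $K_X^2$ and $p_g(X)$ in terms of the ruled surface $W$ and a few line bundles on it, after which a direct estimate --- or Miyaoka--Yau applied to $X$ --- forces $p_g$ to be bounded. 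The main obstacle lies precisely here: the relative scroll may degenerate over special points of $C$, the triple cover can acquire bad singularities, and keeping every estimate effective (so as to obtain the improved explicit bound on $p_g$ promised in the abstract) is delicate. This is where the original argument had gaps, and the bulk of the work --- including the careful analysis of the relative scroll and of the trigonal triple-cover data --- is devoted to making this case rigorous. Assembling the bounds from all strata gives a uniform constant $P_0$ with $p_g \le P_0$ whenever $g = 5$; hence $p_g \gg 1$ forces $g \le 4$, which is the conjecture.
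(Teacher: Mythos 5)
Your outline correctly reproduces the reduction (Beauville gives $g\le 5$, the hyperelliptic genus-$5$ case is Sun's theorem, and what remains is to bound $p_g$ for non-hyperelliptic genus-$5$ fibrations, with trigonal fibers the hard case), and it correctly identifies the geometry of a genus-$5$ canonical curve: a complete intersection of three quadrics in the Petri-general case, a curve on a rational normal scroll in the trigonal case. (The plane-quintic stratum you list is vacuous: plane quintics have genus $6$.) But at the decisive quantitative step the proposal has no content. In each stratum you assert that the structure of the relative canonical model ``yields a sharpened lower bound on $K_X^2$'' which, fed into Miyaoka--Yau $K_X^2\le 9\chi(\OO_X)$, bounds $p_g$; no such bound is derived, and this closing move is precisely the one that fails at $g=5$ --- it is the reason Beauville's argument stops at genus $5$ rather than $4$. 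Pinning down ``the splitting type of $E$'' does not by itself control the negativity of the fixed part $N$ of $|K_X|$, which is what must actually be estimated.

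The paper's engine is different and is absent from your proposal. One first shows $N=8\Gamma+V$ with $\Gamma$ a \emph{section} of $f$, and the whole proof is a two-sided squeeze on $\Gamma^2$: a lower bound $\Gamma^2\ge -\tfrac{4}{33}d+\tfrac{67}{33}\deg K_C$ from the \emph{logarithmic} Bogomolov--Miyaoka--Yau inequality applied to the pair $(X,\Gamma)$ (not to $X$ alone), and an upper bound $\Gamma^2\le \tfrac1a\left(-d+\tfrac12\deg K_C\right)$ for $a=4$ or $5$ from the regularity of $X\to\PP(f_*\OO_X(a\Gamma))^\vee$ together with the semipositivity of $f_*\omega_f$. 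Comparing the two forces $d\le -\tfrac{49}{2}\deg K_C$, hence $g(C)=0$ and $p_g\le 50$. The technical core --- proving that every fiber of the pseudo-canonical model $Y=\tau(X)\subset\PP(f_*\OO_X(8\Gamma))^\vee$ is an integral curve of arithmetic genus $5$ and degree $8$ of type A1, A2 or A3, and then, via Harris's theorem on theta characteristics, that $f_*\OO_X(a\Gamma)$ has rank $2$ and the induced map to a ruled surface is everywhere regular --- is exactly what your ``delicate estimates'' placeholder would have to become. Your alternative plan for the trigonal case via the Tschirnhausen bundle of the relative triple cover $X\dashrightarrow W$ is a genuinely different idea, but as stated it founders on the very points you flag: the map is only rational, the scroll degenerates to a cone over special points, and no effective inequality is extracted. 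As it stands the proposal is a roadmap, not a proof.
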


Given Beauville's result, to prove Xiao's conjecture, it suffices to
show that canonically fibered surfaces whose general fibers are curves of genus $5$ have bounded geometric genus. So we further assume that

\begin{enumerate}
\item[C6.]
$X_p = f^{-1}(p)$ is a smooth projective
curve of genus $5$ for $p\in C$ general.
\end{enumerate}

A proof of Conjecture \ref{CONJXIAO} was claimed by the second author in \cite{CHEN20171033}. However, there are gaps in the proof. The main issue is that the general fibers of $f$ might be trigonal. If that happens, the canonical embedding of $X_p$ is not an intersection of three quadrics, a hypothesis wrongly assumed in \cite{CHEN20171033}. The purpose of this paper is to correct this mistake by mainly proving Xiao's conjecture when the general fibers of $f$ are trigonal. In addition, we will give a simplified proof of Conjecture \ref{CONJXIAO} when the general fibers of $f$ are nontrigonal. Actually this paper is independent of \cite{CHEN20171033}, although we borrow many techniques from that paper.

Our main theorem is 

\begin{Theorem}\label{TRIGCFSTHMMAIN}
Let $f: X \to C$ be a canonically fibered surface satisfying C1-C6. If the general fibers of $f$ are non-hyperelliptic, then $g(C) = 0$ and $p_g(X) \le 50$.
\end{Theorem}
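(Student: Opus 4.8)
The plan is to reduce the theorem to the bound $d=\deg D\le 49$ together with the vanishing $g(C)=0$, and then to extract that bound from the interaction between the canonical-fibration hypothesis and the projective geometry of the genus-$5$ fibres. Throughout I write $F$ for a general fibre and $E:=f_*\omega_{X/C}$.

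\textbf{Step 1 (reduction to a bound on $d$).} First I would record that, since the moving part of $|K_X|$ is $f^*D$ and $f$ has connected fibres,
\[
p_g(X)=h^0(X,K_X)=h^0(X,f^*D)=h^0(C,D)=d-g(C)+1+h^1(C,D).
\]
For $p_g\gg1$ one has $d\gg0$, hence $h^1(C,D)=0$ and $p_g(X)=d-g(C)+1$. Using $(f^*D)^2=0$ and $N\cdot F=\deg(N|_F)=2\cdot5-2=8$ I would then compute
\[
K_X\cdot F=8,\qquad K_X^2=N^2+16d,\qquad K_{X/C}^2=N^2+16\bigl(d-2g(C)+2\bigr),
\]
so that proving the theorem amounts to the two assertions $g(C)=0$ and $d\le 49$, whence $p_g=d+1\le 50$.

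\textbf{Step 2 ($g(C)=0$).} Here $E$ is nef (Fujita) of rank $5$ with $\deg E=\chi(\mathcal O_X)+4-4g(C)$. The hypothesis that the moving part of $|K_X|=|K_{X/C}+f^*K_C|$ is the pullback $f^*D$ means the evaluation map $H^0(X,K_X)\otimes\mathcal O_C\to E\otimes\omega_C$ has rank-one image; saturating it I obtain a sub-line bundle $A\subset E$ with $\deg A\ge d-2g(C)+2$ and $h^0(A\otimes\omega_C)=p_g$. Since $E$ is nef, so is the rank-$4$ quotient $E/A$, giving $\deg(E/A)\ge 0$; substituting the degrees above and using $q(X)\ge g(C)$ yields $3g(C)+q(X)\le 4$, hence $g(C)\le 1$. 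The borderline $g(C)=1$ forces $E/A$ to be numerically flat, which I would exclude by the resulting (near-)isotriviality, concluding $g(C)=0$ and $d=p_g-1$. On $C\cong\PP^1$ this writes $E=\bigoplus_{i=1}^5\mathcal O(a_i)$ with $a_1\ge\cdots\ge a_5\ge0$, $a_1=\deg A$ dominant and $a_2,\dots,a_5$ bounded.

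\textbf{Step 3 (relative quadrics and the trigonal/non-trigonal split).} Next I would study the relative multiplication map $\mu:\sym^2 E\to f_*\omega_{X/C}^{\otimes 2}$, which for non-hyperelliptic fibres is generically surjective with kernel $\mathcal K$ of rank $3$ (Max Noether), so each fibre's canonical curve lies on three quadrics. Petri's theorem then splits the argument. In the \emph{non-trigonal} case the fibre is the complete intersection of the three quadrics, so $X$ is birational to $\tilde Q_1\cap\tilde Q_2\cap\tilde Q_3$ with $\tilde Q_i\in|2\xi+c_i f|$ in $\PP(E)\xrightarrow{\pi}\PP^1$, $\xi=\mathcal O_{\PP(E)}(1)$, and intersection theory on $\PP(E)$ expresses $K_{X/C}^2$ and $N^2$ through $\deg E$, $d$ and $\sum c_i$. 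In the \emph{trigonal} case the three quadrics cut out a relative rational normal scroll, the $g^1_3$ equips $E$ with a distinguished rank-$2$ piece, and the fibre is a cubic-type section of the scroll. In both cases the decisive input is that $N$ is effective with $N|_F$ the canonical divisor cut out by $A$, which — given the very unbalanced splitting of $E$ — bounds the twists $c_i$ (respectively the scroll and cubic data), and hence bounds $d$.

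\textbf{Main obstacle and conclusion.} I expect the trigonal case to be the crux, and it is precisely the configuration overlooked in \cite{CHEN20171033}: because a trigonal genus-$5$ curve is \emph{not} an intersection of quadrics, one cannot present $X$ inside $\PP(E)$ as a complete intersection and must instead control the relative scroll together with its degenerations over the finitely many points of $C$ where the scroll type or the $g^1_3$ jumps, as well as the cubic equation of the fibre on it. Once the twist and scroll data are bounded in each case, I would combine the resulting expression for $K_X^2$ with $g(C)=0$, $d=p_g-1$ and the effectivity of $N$, optimising the numerical inequality to obtain $p_g(X)\le 50$.
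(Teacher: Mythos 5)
Your reduction in Step 1 and the slope argument giving $g(C)\le 1$ are fine, and you correctly identify the trigonal case as the crux, but the proposal is missing the actual mechanism that bounds $d$, and the mechanism you gesture at in Step 3 does not close. The paper's proof rests on trapping the self-intersection of the distinguished section $\Gamma$ (Xiao's decomposition $N=8\Gamma+V$ with $\Gamma$ a single section of $f$, which you never invoke) between two bounds with incompatible slopes in $d$: a lower bound $\Gamma^2\ge -\frac{4}{33}d+\frac{67}{33}\deg K_C$ obtained from the logarithmic Bogomolov--Miyaoka--Yau inequality applied to the pair $(X,\Gamma)$, and an upper bound $\Gamma^2\le\frac{1}{a}\left(-d+\frac{\deg K_C}{2}\right)$ for some $a\in\{4,5\}$, obtained by showing that $f_*\OO_X(a\Gamma)$ has rank $2$ and that the induced map to the ruled surface $\PP(f_*\OO_X(a\Gamma))^\vee$ is regular, so that $a\Gamma$ is the pullback of a section whose self-intersection is controlled by the Harder--Narasimhan slopes of $f_*\omega_X$. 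Since $\frac{1}{a}\ge\frac{1}{5}>\frac{4}{33}$, these two bounds force $d\le-\frac{49}{2}\deg K_C$, hence $g(C)=0$ and $p_g\le 50$. Note that $a\le 5$ is essential: the analogous bound with $a=8$, which comes for free from the canonical map, has slope $\frac{1}{8}<\frac{4}{33}$ and yields no contradiction.

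Your substitute for this --- that the effectivity of $N$ together with the unbalanced splitting of $E$ bounds the twists $c_i$ (or the scroll data) and hence $d$ --- is not a proof, and I do not believe it is true as stated: those soft conditions are satisfied by every canonically fibered surface, and intersection theory on $\PP(E)$ alone, without a global input of Miyaoka--Yau type, produces no contradiction for large $d$. Moreover, the genuinely hard part of the paper, for which your plan has no counterpart, is establishing the regularity of the degree-$a$ map for $a\in\{4,5\}$: this requires proving that every fibre of the pseudo-canonical model $Y\subset\PP(f_*\OO_X(8\Gamma))^\vee$ is an integral curve of arithmetic genus $5$ and degree $8$ of type A1, A2 or A3 (reducedness in the trigonal/cubic-cone case being the most delicate point), followed by a pointwise analysis of hyperplanes osculating $Y_p$ at $q=Y_p\cap\Lambda$ and an appeal to Harris's parity theorem on theta characteristics to control the rank of $f_*\OO_X(4\Gamma)$. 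Finally, your exclusion of $g(C)=1$ by near-isotriviality is unsubstantiated; in the paper the elliptic base is eliminated by the same numerical inequality, since $\deg K_C=0$ would force $d\le 0$.
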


X.T. Sun proved Xiao's conjecture when the general fibers of $f: X\to C$ are hyperelliptic \cite[Corollary 1]{Sun1994}: If $f: X \to C$ is a canonically fibered surface satisfying C1-C6 and the general fibers of $f$ are hyperelliptic, then $g(C)= 0$ and $p_g(X) \le 52$.
A combination of Sun's and our results settles Xiao's Conjecture \ref{CONJXIAO} with an upper bound $p_g(X) \le 52$ on the geometric genus of a canonically fibered surface of relative genus $5$.

We will also study canonically fibered surfaces $f:X \to C$ of relative genus $3$ and $4$. For these surfaces, we are trying to find a constant $c$ such that
$$
K_X^2 \ge c p_g(X) + O(1)
$$
This is another question asked by G. Xiao in \cite[Problem 6]{problem} (see also \cite{Sun1994}). We obtain

\begin{Theorem}\label{TRIGCFSTHMG36}
Let $f: X\to C$ be a canonically fibered surface satisfying C1-C5 with $g = g(X_t) = 3$ or $4$ for $t\in C$ general.
If $f$ is a non-hyperelliptic fibration, then
\begin{equation}\label{TRIGCFSE408}
K_X^2 \ge \begin{cases}
\dfrac{16}3 d + \dfrac{10}3 \deg K_C & \text{for } g=3\\ \vspace{-6pt}\\
\dfrac{22}3 d + \dfrac{14}3 \deg K_C & \text{for } g=4 \text{ and }
d \ge -\dfrac{7}{2}\deg K_C
\end{cases} 
\end{equation}
\end{Theorem}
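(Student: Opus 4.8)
The plan is to reduce the stated inequalities to a single lower bound on the self-intersection of the fixed part $N$, and then to produce that bound from the relative (tri)canonical geometry of $f$. Writing $K_X = f^* D + N$ and using $(f^* D)^2 = 0$ together with $N|_{X_t} \in |K_{X_t}|$ for a general fiber $X_t$ (which holds since $K_X|_{X_t} = K_{X_t}$ while $f^* D|_{X_t} = \OO_{X_t}$), one finds $f^* D \cdot N = (2g-2)d$ and hence
\[ K_X^2 = 2(2g-2)\, d + N^2 . \]
So the inequalities of Theorem \ref{TRIGCFSTHMG36} are equivalent to $N^2 \ge -\tfrac{8}{3} d + \tfrac{10}{3}\deg K_C$ when $g=3$, and to $N^2 \ge \tfrac{14}{3}(\deg K_C - d)$ when $g=4$ (under $d \ge -\tfrac72 \deg K_C$); equivalently, they are lower bounds on $K_{X/C}^2 = K_X^2 - 2(2g-2)\deg K_C$. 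Since the required bound on $N^2$ is negative and decreases in $d$, effectivity of $N$ is by itself useless, and the content must come from the fibers.

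Next I would set up the relative canonical picture. Let $\mathcal E = f_* \omega_{X/C}$, a rank-$g$ bundle on $C$. As the general fiber is non-hyperelliptic of genus $3$ or $4$, it is canonically embedded: for $g=3$ as a plane quartic in $\PP^2 = \PP(\mathcal E_t)$, and for $g=4$ as a complete intersection of a quadric and a cubic in $\PP^3$. Relativizing, $X$ maps to $\PP(\mathcal E)$ over $C$, and its image is controlled by the kernels of the multiplication maps $\sym^m \mathcal E \to f_*\omega_{X/C}^{\otimes m}$, which encode the relative defining equations. For $g=4$ the degree-two kernel is a line bundle, i.e. a relative quadric scroll containing the relative canonical image, and $X$ sits on it as a relative cubic; for $g=3$ the first relation is the relative quartic. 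Crucially, both non-hyperelliptic genus-$3$ and genus-$4$ curves are trigonal, so the containing variety is a relative rational normal scroll — the very object exploited in Theorem \ref{TRIGCFSTHMMAIN}.

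The mechanism forcing the sharp constants is that the canonically-fibered hypothesis makes $\mathcal E$ strongly unstable. Indeed, the moving part of $|K_X|$ being $f^* D$ yields a line subbundle $\mathcal A := \OO_C(D - K_C) \hookrightarrow \mathcal E$ of degree $d - \deg K_C$, adjoint to $f^* \mathcal A \hookrightarrow \omega_{X/C}$ with cokernel divisor exactly $N$, so that $\omega_{X/C} = f^* \mathcal A + N$ and $H^0(\omega_C \otimes \mathcal A) = H^0(K_X)$. For $d \gg 0$ this subbundle has very large slope. I would feed this destabilizing subbundle into the relative-equation sequences above, via a Harder--Narasimhan analysis of $\mathcal E$ and of $f_*\omega_{X/C}^{\otimes 2}$ in the spirit of Xiao's filtration method, to bound $\deg f_*\omega_{X/C}^{\otimes 2}$, and hence $N^2$, from below. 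The essential point is that the trigonal scroll — not the Cornalba--Harris slope inequality, which is too weak here by a substantial factor — is what delivers the coefficients $\tfrac{16}{3}$ and $\tfrac{22}{3}$; the threshold $d \ge -\tfrac72 \deg K_C$ for $g=4$ is precisely the regime in which $\mathcal A$ dominates strongly enough for the quadric-bundle estimate to close.

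The main obstacle I anticipate is the positivity bookkeeping for the relative equations once $\mathcal E$ is unstable: one must locate $\mathcal A$ with respect to the scroll and control the degrees of the quadric line bundle, of the relative cubic bundle, and of the vertical part of $N$ tightly enough to recover the \emph{exact} coefficients, all while handling degenerate fibers and verifying that the multiplication maps attain their expected generic ranks. This is exactly where a crude slope estimate fails and the trigonal scroll structure shared with Theorem \ref{TRIGCFSTHMMAIN} becomes indispensable.
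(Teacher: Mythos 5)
Your reduction $K_X^2 = 2(2g-2)d + N^2$ is correct arithmetic, but from that point on the proposal is a research plan rather than a proof, and the plan as stated would not produce the claimed constants. The paper's argument has two concrete pillars that you do not supply. First, it splits $N = \sum \mu_i\Gamma_i + V$ into horizontal components and proves the elementary bound \eqref{TRIGCFSE404} by applying adjunction to each $\Gamma_i$ and using relative nefness of $K_X$; this already gives \eqref{TRIGCFSE408} for every configuration of the horizontal part \emph{except} the extremal ones where a single section $\Gamma$ carries multiplicity $2g-2$ (and, for $g=3$, the configuration $\Gamma_1 + 3\Gamma_2$). Second, in those extremal cases the needed input is the upper bound $\Gamma^2 \le \tfrac{1}{a}\bigl(-d + \tfrac{1}{2}\deg K_C\bigr)$ for a suitable $a\in\{3,4\}$, obtained from the regularity of the degree-$a$ map $\alpha: X\to \PP(f_*\OO_X(a\Gamma))^\vee$ onto a ruled surface together with $\mu_{\max}$ of $f_*\omega_X\otimes\OO_C(-D)$. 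Your proposal never isolates the section $\Gamma$, never formulates a bound on $\Gamma^2$, and never addresses why $\alpha$ is regular on \emph{every} fiber --- which is where essentially all the work lies: one must classify the degenerate fibers of the relative canonical image (integrality, exclusion of non-reduced fibers supported on a conic or rational normal curve via Lemmas \ref{TRIGCFSLEMDOUBLE} and \ref{TRIGCFSLEM001}, and, for $g=4$, Harris's parity theorem for theta characteristics to decide between $a=3$ and $a=4$). Saying that one would ``feed the destabilizing subbundle into the relative-equation sequences'' via a Harder--Narasimhan analysis does not identify any of this, and you yourself concede that slope-type estimates fall short of the constants $\tfrac{16}{3}$ and $\tfrac{22}{3}$.

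Two further specific errors. For $g=3$ the canonical image of a fiber is a plane quartic in $\PP^2$; there is no containing rational normal scroll, so the ``relative scroll'' framework you propose simply does not apply in that case --- the trigonal structure enters only through projection from the point $Y_p\cap\Lambda$, and proving that this projection is everywhere defined is Proposition \ref{TRIGCFSPROP001} / \ref{TRIGCFSPROP004}. And the threshold $d\ge -\tfrac{7}{2}\deg K_C$ has nothing to do with a regime where your subbundle $\mathcal A$ ``dominates'': it arises purely from comparing the two lower bounds $\tfrac{15}{2}d+\tfrac{21}{4}\deg K_C$ (extremal configuration $N = 6\Gamma + V$) and $\tfrac{22}{3}d+\tfrac{14}{3}\deg K_C$ (all other configurations) and asking when the former exceeds the latter.
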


\section{Outline of the Proof of Theorem \ref{TRIGCFSTHMMAIN}}

\subsection{Lower bound for $\Gamma^2$}
 
Let us collect all of the known results on canonically fibered surfaces $f:X\to C$ satisfying C1-C6. Most of the following results are due to G. Xiao \cite{Xiao1985}, with some improvements that are known to experts. We are not claiming any originality on these results.

Suppose that $f: X\to C$ is a canonically fibered surface satisfying C1-C6. Then

\begin{itemize}
\item $C$ is either rational or elliptic.
\item We have
\begin{equation}\label{TRIGCFSE002}
\begin{aligned}
f_* \omega_X =& f_* \omega_f \otimes \omega_C = L \oplus M\\
\text{where } & h^0(L) = h^0(K_X),\ M\otimes \omega_C^{-1} \text{ is semi-positive and}\\ & h^1(M) = h^1(\OO_X).
\end{aligned}
\end{equation}
Here $\omega_X = K_X$ and $\omega_C$ are the dualizing sheaves of $X$ and $C$ and $\omega_f$ is the relative dualizing sheaf of $f$.
\item We have $N = 8 \Gamma + V$ in \eqref{TRIGCFSE000}. That is,
\begin{equation}\label{TRIGCFSE001}
K_X = f^* D + 8 \Gamma + V
\end{equation}
where $V$ is effective, $f_* V = 0$ and $\Gamma$ is a section of $f$ satisfying 
\begin{equation}\label{TRIGCFSE530}
\begin{aligned}
\boxed{-\frac{4}{33} d + \frac{67}{33} \deg K_C} &\le \deg K_C - K_X \Gamma = \Gamma^2 = -\frac{d}9 + \frac{\deg K_C}9 -
\frac{\Gamma V}9
\\
&\le -\frac{d}{9} + \frac{\deg K_C}{9}
\end{aligned}
\end{equation}
for $d = \deg D$.
\end{itemize}

Note that \eqref{TRIGCFSE530} is an improvement of \cite[Theorem 2.3, p. 1036]{CHEN20171033}. The improvement comes from the application of the logarithmic Bogomolov--Miyaoka--Yau inequality \cite{LOGMY} to $(X,\Gamma)$ instead of $X$ itself. Here is the argument.

\begin{proof}[Proof of \eqref{TRIGCFSE530}]
By adjunction,
$$
\begin{aligned}
\deg K_C = \deg K_\Gamma &= (K_X + \Gamma) \Gamma \\
&= (f^* D + 9\Gamma + V) \Gamma = d + 9\Gamma^2 + \Gamma V
\end{aligned}
$$
Hence
\begin{equation}\label{TRIGCFSE650}
\Gamma^2 = -\frac{d}9 + \frac{\deg K_C}9 - \frac{\Gamma V}9
\end{equation}
and
\begin{equation}\label{TRIGCFSE653}
K_X \Gamma = \frac{d}9 + \frac{8\deg K_C}9 + \frac{\Gamma V}9
\end{equation}
Applying the logarithmic Bogomolov--Miyaoka--Yau inequality \cite{LOGMY} to $(X,\Gamma)$, we obtain
$$
\begin{aligned}
(K_X + \Gamma)^2 \le 3 e(X\backslash \Gamma) &= 3e(X) - 3e(\Gamma)\\
&= 36\chi(\OO_X) - 3K_X^2 + 3\deg K_C
\end{aligned}
$$
where $e(X \backslash)$ is the topological Euler characteristic of $X \backslash$ and where $e(\Gamma)$ is the topological Euler characteristic of $\Gamma$.
Hence
\begin{equation}\label{TRIGCFSE651}
\begin{aligned}
(K_X + \Gamma)^2 + 3K_X^2 &\le 36\chi(\OO_X) + 3\deg K_C\\
&= 36d + 36 - 15 \deg K_C - 36 h^1(\OO_X)
\end{aligned}
\end{equation}
For the left hand side of \eqref{TRIGCFSE651}, we have
\begin{equation}\label{TRIGCFSE652}
\begin{aligned}
&\quad (K_X + \Gamma)^2 + 3K_X^2 = K_X(4K_X + \Gamma) + (K_X+\Gamma)\Gamma\\
&=K_X (4 f^* D + 33\Gamma + 4V) + \deg K_C\\
&= K_X (4f^*D + 33\Gamma) + 4 K_X V + \deg K_C\\
&\ge K_X (4f^*D + 33\Gamma) + \deg K_C = 32d + 33 K_X \Gamma + \deg K_C\\
&= \dfrac{107}3 d  + \dfrac{91}3\deg K_C + \dfrac{11}3 \Gamma V
\end{aligned}
\end{equation}
where we replace $K_X \Gamma$ by the right hand side of \eqref{TRIGCFSE653} at the last step.

The combination of \eqref{TRIGCFSE651} and \eqref{TRIGCFSE652} yields
\begin{equation}\label{TRIGCFSE654}
\begin{aligned}
\Gamma V &\le \dfrac{d}{11} + \dfrac{108}{11}(1-h^1(\OO_X)) - \dfrac{136}{11} \deg K_C
\\
&\le \dfrac{d}{11} + \dfrac{108}{11}(1-h^1(\OO_C)) - \dfrac{136}{11} \deg K_C \le \dfrac{d}{11} - \dfrac{190}{11} \deg K_C
\end{aligned}
\end{equation}
and \eqref{TRIGCFSE530} follows.
\end{proof}

The lower bound of $\Gamma^2$ in \eqref{TRIGCFSE530} is crucial to the proof of our main theorem.

\subsection{Pseudo-canonical model}\label{TRIGCFSSUBSECPCM}

Let $f: X \to C$ be a canonically fibered surface satisfying C1-C6. We have a rational map
\begin{equation}\label{TRIGCFSE552}
\begin{tikzcd}
\tau: X \arrow[dashed]{r} & W = \PP (f_* \LL)^\vee
= \Proj (S^\bullet f_*\LL) = \Proj\left(\bigoplus \operatorname{Sym}^\bullet f_* \LL\right),
\end{tikzcd} 
\end{equation}
where $\LL = \CO_X(8\Gamma)$ with $\Gamma$ given in \eqref{TRIGCFSE001}. The proper transform $Y = \tau(X)$ of $X$ under $\tau$, is called a {\em pseudo relative canonical model} of $X/C$ in \cite{CHEN20171033}.

The study of $\tau$ and $Y$ form the core of the argument in \cite{CHEN20171033}. However, due to the missing case of trigonal fibrations, the statements and proofs of the structure theorems of $Y$ \cite[Theorem 4.2 and 4.7]{CHEN20171033} are wrong. We correct them in the following theorem:

\begin{Theorem}\label{TRIGCFSTHMPCM}
Let $f: X \to C$ be a canonically fibered surface satisfying C1-C6, $\tau$ be the rational map given by \eqref{TRIGCFSE552} and $Y = \tau(X)$.
If the general fibers of $f$ is not hyperelliptic, then
\begin{itemize}
\item $\tau$ is regular;
\item the general fibers of $Y/C$ are smooth;
\item for every $p\in C$, the fiber $Y_p$ of $Y$ over $p$ is an integral curve of arithmetic genus $p_a(Y_p) = 5$ and degree $8$ in $W_p\cong \PP^4$ and one of the following:
\begin{enumerate}
\item[{\bf A1}.] a complete intersection of three quadrics in $W_p$,
\item[{\bf A2}.] a curve lying on a rational normal scroll, or
\item[{\bf A3}.] a curve lying on a cone over a rational normal curve in $\PP^3$.
\end{enumerate}
\end{itemize}
\end{Theorem}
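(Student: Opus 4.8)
The plan is to realize $\tau$ fiberwise as the relative canonical map and then to invoke the classical structure theory of canonical curves of genus $5$, upgraded to integral Gorenstein fibers. First I would show that $f_*\LL$ is locally free of rank $5$. Restricting \eqref{TRIGCFSE001} to a fiber and using $f^*D|_{X_p} = 0$ together with $K_X|_{X_p} = \omega_{X_p}$ (adjunction, since $X_p^2 = 0$) gives $\LL|_{X_p} = \CO_{X_p}(8\Gamma)|_{X_p} \cong \omega_{X_p}(-V|_{X_p})$. For general $p$ the vertical divisor $V$ misses $X_p$, so $\LL|_{X_p}\cong \omega_{X_p}$ is the canonical bundle of a non-hyperelliptic genus $5$ curve, whence $h^0(\LL|_{X_p}) = 5$ and $h^1 = 0$. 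By cohomology and base change over the smooth curve $C$, the sheaf $f_*\LL$ is then locally free of rank $5$, so $W = \PP(f_*\LL)^\vee$ is a $\PP^4$-bundle over $C$ and the evaluation $f^*f_*\LL \to \LL$ defines $\tau$.

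Next I would prove that $\tau$ is regular, i.e.\ that $\LL$ is relatively base point free, which reduces to showing that $|\LL|_{X_p}|$ has no base points for every $p$. On the moving part of $X_p$ this is the canonical system of an integral Gorenstein non-hyperelliptic curve, which is base point free and in fact very ample by the Noether--Petri theorem (valid for integral Gorenstein curves); the components of $X_p$ meeting $V$ are contracted by $\tau$. With $\tau$ regular, $Y = \tau(X)$ is an integral surface and $Y\to C$ is flat, so the Hilbert polynomial of $Y_p\subset W_p\cong\PP^4$ is independent of $p$. A general $Y_p$ is the canonical image of a smooth non-hyperelliptic genus $5$ curve, hence a smooth, non-degenerate, projectively normal curve of degree $2g-2 = 8$ and arithmetic genus $5$; by flatness every $Y_p$ has $\deg Y_p = 8$ and $p_a(Y_p) = 5$. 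Each $Y_p$ is then integral: irreducibility follows since $Y_p$ is the image of the irreducible $X_p$ under the map $\tau$, finite on the moving part, while reducedness and the correct genus follow from constancy of the Hilbert polynomial together with $\omega_{Y_p}\cong\CO_{Y_p}(1)$, so that $Y_p$ is a canonically embedded integral Gorenstein curve.

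Finally I would classify $Y_p$ via the net of quadrics through it. Quadratic normality (Max Noether) gives $h^0(W_p,\CO(2)) - h^0(Y_p,\omega_{Y_p}^{\otimes 2}) = 15 - (3g-3) = 15 - 12 = 3$, so $Y_p$ lies on exactly a net of quadrics. By the Enriques--Babbage--Petri theorem, in its form for integral Gorenstein curves, the intersection of this net equals $Y_p$ itself unless $Y_p$ is trigonal, the plane-quintic exception being impossible in genus $5$. If the intersection is $Y_p$, then since $\deg Y_p = 8 = 2^3$ the three quadrics form a regular sequence and $Y_p$ is their complete intersection, giving case A1. If $Y_p$ is trigonal, the net cuts out a surface of minimal degree $3$ in $\PP^4$, i.e.\ a rational normal scroll $S(a,b)$ with $a+b = g-2 = 3$; the Maroni dichotomy $(a,b) = (1,2)$ versus $(a,b) = (0,3)$ yields the smooth scroll of case A2 and the cone over a rational normal cubic in $\PP^3$ of case A3, respectively, with $Y_p$ lying on this scroll.

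I expect the main obstacle to be the special fibers. Establishing regularity of $\tau$ over them---equivalently ruling out base points of the relative canonical system on degenerate fibers---together with showing that no fiber degenerates into a reducible or non-reduced curve (in particular that no $X_p$ acquires a hyperelliptic canonical map, whose image would have degree $4$ rather than $8$), is the delicate point. This is precisely where I anticipate needing the control on the vertical divisor $V$ supplied by the bound on $\Gamma V$ in \eqref{TRIGCFSE654}, equivalently the lower bound for $\Gamma^2$ in \eqref{TRIGCFSE530}, to guarantee that $\tau$ contracts exactly the $V$-part and produces an integral canonical curve from every fiber.
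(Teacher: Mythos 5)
Your skeleton (fiberwise canonical map, net of quadrics, trigonal trichotomy via Enriques--Babbage--Petri and the Maroni invariant) matches the paper's, but the two steps you treat as routine are exactly the ones that carry the weight of the proof, and the justifications you offer for them do not go through. For regularity of $\tau$ on special fibers: you invoke Noether--Petri for ``the moving part of $X_p$,'' but a special fiber $X_p$ need not be integral, the linear system defining $\tau$ along $X_p$ is the $5$-dimensional subspace $f_*\LL\otimes\OO_p\subset H^0(\LL|_{X_p})$ rather than the full canonical system of an integral Gorenstein curve, and the components contracted by $\tau$ are those disjoint from the section $\Gamma$ (on which $\OO_X(8\Gamma)$ has degree $0$), not those meeting $V$. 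The paper never proves relative base-point-freeness directly; instead it takes $Y$ to be the proper transform, fixes $\deg Y_p=8$ by flatness, proves via the degeneration Lemma~\ref{TRIGCFSLEMGENUS} that the image $J_p$ of the unique component $B_p$ meeting $\Gamma$ satisfies $p_a(J_p)\ge 5$ when $\tau|_{B_p}$ is birational, and then uses the net of quadrics together with Lemmas~\ref{TRIGCFSLEM3QUADRICS} and~\ref{TRIGCFSLEMCUBIC} to force $\deg J_p=8$; regularity follows only then, because an indeterminacy point would contribute extra components pushing $\deg Y_p$ above $8$.

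The more serious gap is reducedness of $Y_p$. You assert that constancy of the Hilbert polynomial together with $\omega_{Y_p}\cong\OO_{Y_p}(1)$ rules out non-reduced fibers; it does not. The paper computes that when $\tau$ maps $B_p$ two-to-one onto a rational normal quartic $J_p$ lying on a cubic cone (type A3), the double structure $2J_p$ has Hilbert polynomial $8n-4$, \emph{exactly} equal to $\chi(\OO_{Y_p}(n))$, so flatness is powerless here. Excluding this possibility is what the authors describe as the technical core: it requires Lemma~\ref{TRIGCFSLEMDOUBLE} (no smooth hypersurface section can pull back to $8\Gamma$ under a $2$-to-$1$ map away from the vertex), a local normal-form analysis at the vertex of the cone in the non-trigonal case, and Proposition~\ref{TRIGCFSPROP003} together with the explicit local computations of Lemmas~\ref{TRIGCFSLEM000}--\ref{TRIGCFSLEM002} in the trigonal case. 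Your proposal contains no substitute for any of this. Finally, the control you anticipate getting from the bound \eqref{TRIGCFSE654} on $\Gamma V$ (equivalently \eqref{TRIGCFSE530}) is misplaced: that inequality enters only later, in deriving the bound on $p_g$ from Corollary~\ref{TRIGCFSCORPCM}; what actually constrains the special fibers in the proof of this theorem is the identity $\tau^*\Lambda=8\Gamma$ for the unique section $\Lambda\in|\OO_W(1)|$ and the resulting total ramification at the single point $Y_p\cap\Lambda$.
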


Recall that a rational normal scroll in $\PP^4$ is a smooth linearly non-degenerate cubic surface isomorphic to $\FF_1$, where 
$\FF_n = \PP (\OO_{\PP^1} \oplus \OO_{\PP^1}(-n))$ are Hirzebruch surfaces. It is the embedding $\FF_1\hookrightarrow \PP^4$ given by $|B + 2F|$, where
$B$ and $F$ are the effective generators of $\Pic(\FF_1)$ satisfying $B^2 = -1$, $BF=1$ and $F^2 = 0$.

We call a cone over a rational normal curve in $\PP^3$ a {\em cubic cone}. If we resolve the vertex of a cubic cone, we obtain $\FF_3$. Later we will prove that an integral linearly non-degenerate cubic surface in $\PP^4$ is either a rational normal scroll or a cubic cone. So the fibers $Y_p$ of type A2 and A3 in the above theorem can be described as curves lying on an integral linearly non-degenerate cubic surface in $\PP^4$.

From now on, we will frequently refer to the types of $Y_p$ in the theorem: we say that $Y_p$ has type A1, A2 or A3 if it is a complete intersection of three quadrics, a curve lying on a rational normal scroll or a curve lying on a cubic cone, respectively.

If the general fibers of $f$ are not trigonal, then 
the general fibers of $Y/C$ are of type A1 and its special fibers are of type A2 or A3. Otherwise, if the general fibers of $f$ are trigonal, in which case we will call $f: X\to C$ a {\em trigonal fibration}, then
by the classical result of B. Saint-Donat
\cite{Saint-Donat1973}, $Y_p$ is contained in a rational normal scroll for $p\in C$ general. So the general fibers of $Y/C$ are of type A2 and its special fibers are of type A3, as we will see later.

We will prove Theorem \ref{TRIGCFSTHMPCM} in the next section. It forms the technical core of this paper. The fact that $Y_p$ is reduced is quite difficult to prove, especially when $f$ is trigonal. Here we will show how to derive our main theorem from Theorem \ref{TRIGCFSTHMPCM}. It is done via a corollary:

\begin{Corollary}\label{TRIGCFSCORPCM}
Let $f: X \to C$ be a canonically fibered surface satisfying C1-C6. 
If the general fiber of $f$ is not hyperelliptic, then
for either $a = 4$ or $5$, $f_*\OO_X(a\Gamma)$ has rank $2$ and
the rational map
\begin{equation}\label{TRIGCFSCORPCME000}
\begin{tikzcd}
\alpha: X \arrow[dashed]{r} & S = \PP (f_* \OO_X(a\Gamma))^\vee
\end{tikzcd} 
\end{equation}
is regular.
\end{Corollary}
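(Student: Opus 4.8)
The plan is to reduce the statement to a computation of the Weierstrass semigroup at the subcanonical point cut out by $\Gamma$ on a general fibre, followed by a base-point-freeness analysis of an associated projection. First I would restrict \eqref{TRIGCFSE001} to a general fibre $X_p$. Since $\Gamma$ is a section of $f$, the intersection $\Gamma\cap X_p$ is a single reduced point $q_p$, so $\CO_X(a\Gamma)|_{X_p}\cong\CO_{X_p}(a q_p)$; moreover $f^*D|_{X_p}=0$, and because $f_*V=0$ the divisor $V$ misses the general fibre. Adjunction then gives $K_{X_p}=8q_p$, so $q_p$ is a subcanonical point of the genus $5$ curve $X_p$ and its Weierstrass semigroup $\Sigma$ is symmetric about $2g-1=9$, i.e. $n\in\Sigma\Leftrightarrow 9-n\notin\Sigma$. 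By cohomology and base change, $\rank f_*\CO_X(a\Gamma)=\ell(a):=h^0(X_p,\CO_{X_p}(a q_p))$ for general $p$, so it remains to compute $\ell(4)$ and $\ell(5)$.

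Non-hyperelliptic forces $2\notin\Sigma$, and symmetry together with closure of $\Sigma$ forces $3\notin\Sigma$: if $3$ were a non-gap then so would be $6=3+3$, contradicting $3\in\Sigma\Rightarrow 9-3=6\notin\Sigma$. Riemann--Roch, in the form $\ell(a)=(a-4)+\ell(8-a)$ coming from $K_{X_p}\sim 8q_p$, then yields $\ell(3)=1$, hence $\ell(5)=2$ unconditionally, while $\ell(4)=1$ or $2$ according as $4$ is a gap or a non-gap of $\Sigma$. Thus $f_*\CO_X(5\Gamma)$ always has rank $2$, and $f_*\CO_X(4\Gamma)$ has rank $2$ precisely when $4\in\Sigma$; in either case a suitable $a\in\{4,5\}$ produces a rank-$2$ bundle and hence a $\PP^1$-bundle $S$ over $C$.

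For regularity I would identify $\alpha$ geometrically. Multiplication by the section of $\CO_X((8-a)\Gamma)$ vanishing along $(8-a)\Gamma$ identifies $H^0(X_p, a q_p)$ with the subspace of $H^0(X_p,K_{X_p})$ consisting of canonical divisors containing $(8-a)q_p$; equivalently $|a q_p|=|K_{X_p}-(8-a)q_p|$ is the pencil of hyperplanes osculating the canonical curve $Y_p\subset W_p\cong\PP^4$ to order $\ge 8-a$ at $q_p$. Since $\tau$ is regular by Theorem \ref{TRIGCFSTHMPCM} and $Y=\tau(X)$, the map $\alpha$ is, fibrewise, the projection of $Y_p$ from its osculating $2$-plane $\Lambda_p=\langle 3 q_p\rangle$ at $q_p$: one takes $a=5$ when $\Lambda_p$ has contact exactly $3$ with $Y_p$ (the case $4\notin\Sigma$), and $a=4$ when it has contact $4$ (the case $4\in\Sigma$, where $|5q_p|$ acquires $q_p$ as a base point since $\ell(4)=\ell(5)=2$, whereas $|4q_p|$ does not, as $\ell(3)=1<\ell(4)=2$). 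With this description $\alpha$ is regular if and only if each such projection is base-point-free, i.e. if and only if the osculating $2$-plane $\Lambda_p$ meets $Y_p$ only at $q_p$, with the expected length, for every $p\in C$.

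The main obstacle is precisely this last base-point-freeness statement, and proving it for \emph{all} $p$ — not merely the general fibre — is where the work lies. Here I would lean on the classification of Theorem \ref{TRIGCFSTHMPCM}: the integrality of every fibre $Y_p$ and the explicit geometry of the three types A1, A2, A3 should bound the intersection of the relative osculating $2$-plane bundle with $Y$ and rule out stray intersection points or excess contact of $\Lambda_p$ with $Y_p$. The delicate cases are the finitely many special fibres of type A2 and A3 and, when $f$ is trigonal, the general fibre of type A2, where $\Lambda_p$ must be shown not to meet the curve away from $q_p$. I would also verify that the relative osculating $2$-planes fit together into an honest $\PP^2$-subbundle of $W$ over $C$, so that $S=\PP(f_*\CO_X(a\Gamma))^\vee$ is a genuine $\PP^1$-bundle into which $\alpha$ maps.
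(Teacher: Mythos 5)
Your setup is sound and runs parallel to the paper's: the generic ranks via the symmetric Weierstrass semigroup at the subcanonical point $q_p$ (a clean, slightly more elementary route to $\ell(3)=1$ and $\ell(5)=2$ than the paper makes explicit), and the reduction of regularity of $\alpha$ to the statement that, for \emph{every} $p\in C$, the space of hyperplanes of $W_p$ having contact $\ge 8-(a-1)$ with $Y_p$ at $q=Y_p\cap\Lambda$ is too small to contain the $2$-dimensional fibre $f_*\OO_X(a\Gamma)\otimes\OO_p$. But the proposal stops exactly where the proof begins. The uniform bounds \eqref{TRIGCFSTHMPCME004} and \eqref{TRIGCFSTHMPCME005} --- that for every $p$ the space of hyperplanes with contact $\ge 4$ (types A2, A3), respectively $\ge 5$ (type A1), at $q$ is exactly one-dimensional --- are the technical core of the corollary, and you defer them with ``should bound'' and ``is where the work lies.'' The paper proves them by intersection multiplicity: two independent such hyperplanes would cut the cubic surface $R$ in curves $\Lambda_R\ne\Lambda_R'$ meeting properly with $(\Lambda_R.\Lambda_R')_q\ge 4>3=\deg R$, after first checking that $\Lambda_R$ is integral in the scroll case and that $\Lambda$ misses the vertex in the cone case (because $p_a$ considerations force $Y_p$ to be singular at the vertex); in type A1 one intersects with two general quadrics through $Y_p$ and uses $S.\Lambda.\Lambda'=4<5$. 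None of this is supplied.

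More seriously, your case $a=5$ fails at special fibres of type A1 without an idea that is absent from the proposal. There the intersection-theoretic argument only yields $\dim\{A:(A.Y_p)_q\ge 5\}=1$, hence $\dim\{A:(A.Y_p)_q\ge 4\}\le 2$; a priori this space could be $2$-dimensional and contain $f_*\OO_X(5\Gamma)\otimes\OO_p$, making $q$ a base point of $\alpha$ on $X_p$. The paper excludes this because $\OO_{Y_p}(4q)$ is a theta characteristic (for type A1 one has $\OO_{Y_p}(8q)=\omega_{Y_p}$), so by Harris' theorem the parity of $h^0$ is constant in the family; since the generic value is $1$ precisely in the case where one is forced to take $a=5$, it follows that $h^0(\OO_{Y_p}(4q))=1$ at every A1 fibre. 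No amount of ``explicit geometry of type A1'' replaces this parity argument, and your plan gives no substitute for it. A smaller but real point: your semigroup computation lives on the general smooth fibre, whereas at special $p$ the curve $X_p$ need not be integral; the analysis must be transferred to $Y_p$ at the smooth point $q$ via the identification \eqref{TRIGCFSTHMPCME003}, which itself rests on the integrality of every $Y_p$ and the local isomorphism of $\tau$ along $\Gamma$ coming from Theorem \ref{TRIGCFSTHMPCM} and Serre's criterion.
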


\subsection{Corollary \ref{TRIGCFSCORPCM} implies Theorem \ref{TRIGCFSTHMMAIN}}

Let us first show how to derive our main theorem from Corollary \ref{TRIGCFSCORPCM}. We are following the same argument in \cite[Theorem 2]{Sun1994}.

Since $f_*\OO_X(a\Gamma)$ has rank $2$ and $\alpha$ is regular, $\alpha$ is a generically finite morphism of degree $a$ from $X$ to a ruled surface $S$ over $C$. Let
$$\Lambda\in |\OO_S(1)| \cong |\OO_X(a\Gamma)|$$
be the unique section. Then
$$
a\Gamma = \alpha^* \Lambda.
$$
It follows that
\begin{equation}\label{TRIGCFSCORPCME001}
\Gamma^2 = \dfrac{1}a \Lambda^2.
\end{equation}

Clearly, $f_* \OO_X(a\Gamma)$ is a subsheaf of $f_* \OO_X(8\Gamma)$, which is in turn a subsheaf of $f_* K_X\otimes \OO_C(-D)$.
By \eqref{TRIGCFSE002},
\begin{equation}\label{TRIGCFSCORPCME002}
\begin{aligned}
&\quad f_* \OO_X(a\Gamma) \subset f_* \OO_X(8\Gamma) \subset f_* K_X\otimes \OO_C(-D) = \OO_C \oplus L^{-1}\otimes M\\
&\text{with } \mu_{\max}(L^{-1}\otimes M) \le -d + \dfrac{\deg K_C}2
\end{aligned}
\end{equation}
where $\mu_{\max}(\VV)$ is the maximal slope of a vector bundle $\VV$ on $C$ in its Harder-Narasimhan filtration. And since $h^0(\OO_X(a\Gamma)) = 1$, we must have 
\begin{equation}\label{TRIGCFSCORPCME018}
f_* \OO_X(a\Gamma) = \OO_C \oplus \calL
\hspace{12pt}\text{with } \mu_{\max}(\calL) \le -d + \dfrac{\deg K_C}2
\end{equation}
Combining \eqref{TRIGCFSCORPCME001} and \eqref{TRIGCFSCORPCME018}, we obtain
\begin{equation}\label{TRIGCFSCORPCME003}
\Gamma^2 \le  \dfrac{1}a \left(-d + \dfrac{\deg K_C}2\right).
\end{equation}
Then, together with \eqref{TRIGCFSE530}, we have
\begin{equation}\label{TRIGCFSCORPCME004}
-\frac{4}{33} d + \frac{67}{33} \deg K_C \le \Gamma^2 \le  \dfrac{1}a \left(-d + \dfrac{\deg K_C}2\right).
\end{equation}
Therefore,
$$
d \le - \dfrac{134 a - 33}{66-8a} \deg K_C \le -\dfrac{49}2 \deg K_C.
$$
So $g(C) = 0$ and $p_g(X) = d+1 \le 50$. This proves Theorem \ref{TRIGCFSTHMMAIN}.

\subsection{Theorem \ref{TRIGCFSTHMPCM} implies Corollary \ref{TRIGCFSCORPCM}}

Next, let us show how to derive Corollary \ref{TRIGCFSCORPCM} from Theorem \ref{TRIGCFSTHMPCM}.

For all vector bundles $\VV$ on $X$, the natural map
\begin{equation}\label{TRIGCFSTHMPCME000}
\begin{tikzcd}
f_* \VV \Big|_p = f_* \VV \otimes \OO_p \ar[hook]{r} & H^0(X_p, \VV|_p)
\end{tikzcd}
\end{equation}
is an injection. So we regard $f_* \VV \otimes \OO_p$ as a subspace of 
$H^0(X_p, \VV|_p)$. To prove Corollary \ref{TRIGCFSCORPCM}, it suffices to show that
\begin{equation}\label{TRIGCFSTHMPCME001}
f_* \OO_X(a\Gamma) \otimes \OO_p \not\subset H^0(\OO_{X_p}((a-1)\Gamma))
\end{equation}
for every point $p\in C$, where $f_* \OO_X(a\Gamma) \otimes \OO_p$ and $H^0(\OO_{X_p}((a-1)\Gamma))$ are regarded as subspaces $H^0(\OO_{X_p}(a\Gamma))$, or equivalently, as subspaces of $H^0(\OO_{X_p}(8\Gamma))$. So we may put \eqref{TRIGCFSTHMPCME001} in the equivalent form
\begin{equation}\label{TRIGCFSTHMPCME002}
f_* \OO_X(a\Gamma) \otimes \OO_p \not\subset H^0(\OO_{X_p}((a-1)\Gamma))
\cap f_* \OO_X(8\Gamma) \otimes \OO_p
\end{equation}
with all spaces in $H^0(\OO_{X_p}(8\Gamma))$.

Let $\Lambda \in |\OO_W(1)| \cong |\OO_X(8\Gamma)|$ be the unique section. Then
$$
\tau^* \Lambda = 8 \Gamma.
$$
Let $\Gamma_Y = Y \cap \Lambda$. Then $\tau^{-1}(\Gamma_Y) = \Gamma$.

Since $Y_p$ is reduced for all $p$, $Y$ is Cohen-Macaulay. And since $Y_p$ is smooth for $p$ general, $Y$ is smooth in codimension $1$. Then by Serre's criterion, $Y$ is normal. And since $\tau: X\to Y$ is finite over $\Gamma_Y$, it must be a local isomorphism along $\Gamma_Y$, i.e., there exists an open neighborhood $U\subset Y$ of $\Gamma_Y$ such that $\tau: \tau^{-1}(U)\to U$ is an isomorphism. It follows that $Y_p$ is smooth at the point $Y_p\cap \Lambda$ for all $p\in C$.

For every $p\in C$ and $0\le b\le 8$, we may identify
\begin{equation}\label{TRIGCFSTHMPCME003}
\begin{aligned}
&\quad H^0(\OO_{X_p}(b\Gamma))
\cap f_* \OO_X(8\Gamma) \otimes \OO_p
\\
&= \big\{A\in H^0(\OO_{W_p}(1)): (A.Y_p)_q \ge 8-b \big\}
\end{aligned}
\end{equation}
where $q = Y_p\cap \Lambda$. In other words, we can identify the left hand side of \eqref{TRIGCFSTHMPCME003} with the space of hyperplanes
in
$$
H^0(\OO_{W_p}(1)) \cong f_* \OO_X(8\Gamma) \otimes \OO_p
$$
that are tangent to $Y_p$ at $q$ with multiplicity at least $8-b$. 
Using this identification, we can prove
\begin{equation}\label{TRIGCFSTHMPCME004}
\begin{aligned}
& \dim H^0(\OO_{X_p}(4\Gamma))
\cap f_* \OO_X(8\Gamma) \otimes \OO_p = 1\\
&\hspace{60pt}\text{if } Y_p \text{ is of type A2 or A3}
\end{aligned}
\end{equation}
and
\begin{equation}\label{TRIGCFSTHMPCME005}
\dim H^0(\OO_{X_p}(3\Gamma))
\cap f_* \OO_X(8\Gamma) \otimes \OO_p = 1
\hspace{12pt}\text{if } Y_p \text{ is of type A1}.
\end{equation}

Suppose that $J = Y_p$ is of type A2. That is, $J$ lies on a rational normal scroll $R\subset W_p$. Since $J$ has arithmetic genus $5$ and degree $8$, it is easy to see that $J\in |3B + 5F|$ on $R\cong \FF_1$.

If \eqref{TRIGCFSTHMPCME004} fails, then
$$
\dim H^0(\OO_{X_p}(4\Gamma))
\cap f_* \OO_X(8\Gamma) \otimes \OO_p \ge 2
$$
Using \eqref{TRIGCFSTHMPCME003}, we see that there exists a hyperplane $\Lambda'\ne \Lambda_p$ in $W_p$ such that
$$
(\Lambda' . J)_q \ge 4
$$
for $q = J\cap \Lambda$.

Let $\Lambda_R = \Lambda . R$ be the scheme-theoretical intersection between $\Lambda$ and $R$. Then $\Lambda_R \in |B + 2F|$ on $R\cong \FF_1$. We claim that $\Lambda_R$ is integral. Otherwise, $\Lambda_R$ has two distinct irreducible components $G_1$ and $G_2$ with $G_1\in |F|$ and $G_2\in |B|$ or $|B+F|$. Since $q$ is the only intersection between $\Lambda_R$ and $J$ on $R$, we have
$$
q = G_1\cap J = G_2\cap J.
$$
Since $J\in |3B + 5F|$, $G_1.J \ge 2$ and $G_2.J\ge 2$. Hence
$$
(J.G_1)_q \ge 2 \hspace{12pt}\text{and}\hspace{12pt} (J.G_2)_q \ge 2
$$
on $R$. This is impossible since $J$, $G_1$ and $G_2$ are smooth at $q$ and $G_1$ and $G_2$ meet transversely at $q$. Therefore, $\Lambda_R$ is integral.

Let $\Lambda_R' = \Lambda' . R$. Since $R$ is linearly non-degenerate,
$\Lambda_R \ne \Lambda_R'$. And since $\Lambda_R$ is integral, $\Lambda_R$ and $\Lambda_R'$ meet properly on $R$. Since
$$
(\Lambda_R.J)_q = 8 \hspace{12pt}\text{and}\hspace{12pt}
(\Lambda_R'.J)_q \ge 4
$$
and $J$ is smooth at $q$, we must have
$$
(\Lambda_R.\Lambda_R')_q \ge 4.
$$
But $\Lambda_R . \Lambda_R' = \deg R = 3$, which is a contradiction.
This proves \eqref{TRIGCFSTHMPCME004} when $Y_p$ is of type A2.

Suppose that $J = Y_p$ is of type A3. That is, $J$ lies on a cubic cone $R\subset W_p$. Let $\pi: \widehat{R}\cong \FF_3\to R$ be the resolution 
of $R$ at the vertex $o$ and let $\widehat{J} \subset \widehat{R}$ be the proper transform of $J$ under $\pi$. Since $\pi^* \OO_R(1) = \OO_{\widehat{R}} (B+3F)$ and $\deg J = 8$, $\widehat{J} (B+3F) = 8$ on $\widehat{R}$, where $B$ and $F$ are effective generators of $\Pic(\FF_3)$ with $B^2 = -3$, $BF=1$ and $F^2 = 0$.
Therefore, $\widehat{J} \in |B + 8F|$ or $|2B + 8F|$. Hence $p_a(\widehat{J}) = 0$ or $4$. In any event, $p_a(\widehat{J}) < p_a(J) = 5$. So $J$ must be singular at $o$. It follows that $q \ne o$ for $q = J\cap \Lambda$. That is, $R$ is smooth at $q$ and $\Lambda$ does not pass through $o$.

If \eqref{TRIGCFSTHMPCME004} fails, then
there exists a hyperplane $\Lambda'\ne \Lambda_p$ in $W_p$ such that
$$
(\Lambda' . J)_q \ge 4.
$$
We choose $\Lambda'$ to be a general member of the space
$$
\big\{A\in H^0(\OO_{W_p}(1)): (A.J)_q \ge 4 \big\}.
$$
Since $\Lambda$ does not pass through $o$, neither does $\Lambda'$.
So both $\Lambda$ and $\Lambda'$ meet $R$ transversely along rational normal curves in $\Lambda_p\cong \PP^3$ and $\Lambda'\cong\PP^3$, respectively. And since $R$ is linearly non-degenerate, $\Lambda_R = \Lambda . R$ and $\Lambda_R' = \Lambda' . R$ meet properly on $R$.
Again, since
$$
(\Lambda_R.J)_q = 8 \hspace{12pt}\text{and}\hspace{12pt}
(\Lambda_R'.J)_q \ge 4
$$
and $J$ is smooth at $q$, we must have
$$
(\Lambda_R.\Lambda_R')_q \ge 4.
$$
But $\Lambda_R . \Lambda_R' = \deg R = 3$, which is a contradiction.
This proves \eqref{TRIGCFSTHMPCME004} when $Y_p$ is of type A3.

Suppose that $J = Y_p$ is of type A1. That is, $J$ is cut out by three quadrics.

If \eqref{TRIGCFSTHMPCME005} fails, then
there exists a hyperplane $\Lambda'\ne \Lambda_p$ in $W_p$ such that
$$
(\Lambda' . J)_q \ge 5
$$
for $q=J\cap \Lambda$.
We choose $\Lambda'$ to be a general member of the space
$$
\big\{A\in H^0(\OO_{W_p}(1)): (A.J)_q \ge 5 \big\}.
$$

Let $\CQ \subset |\OO_{W_p}(2)|$ be the linear system of quadrics in $W_p$ containing $J$. Since $J = \operatorname{Bs}(\CQ)$ and $q = J\cap \Lambda\cap \Lambda'$,
$$
\dim (Q_1\cap Q_2\cap \Lambda\cap \Lambda') = 0
$$
for two general members $Q_1$ and $Q_2$ of $\CQ$. That is, $Q_1, Q_2, \Lambda$ and $\Lambda'$ meet properly in $W_p$.

Let $S = Q_1\cap Q_2$. Since $J$ is smooth at $q$, $S$ is smooth at $q$. 
Again, since
$$
(\Lambda.J)_q = 8 \hspace{12pt}\text{and}\hspace{12pt}
(\Lambda'.J)_q \ge 5
$$
and $J$ is smooth at $q$, we must have
$$
(S.\Lambda.\Lambda')_q \ge 5.
$$
But $S, \Lambda$ and $\Lambda'$ meet properly in $W_p$ and
$S.\Lambda.\Lambda' = 4$, which is a contradiction.
This proves \eqref{TRIGCFSTHMPCME005}.

When $Y_p$ is of type A1,
$\OO_{Y_p}(8q) = \omega_{Y_p} = \OO_W(1)\otimes \OO_{Y_p}$ and hence
$$
H^0(\OO_{Y_p}(8q)) = H^0(\OO_{W_p}(1)) = f_* \OO_X(8\Gamma)\otimes \OO_p.
$$
So \eqref{TRIGCFSTHMPCME005} is equivalent to
\begin{equation}\label{TRIGCFSTHMPCME006}
h^0(\OO_{Y_p}(3q)) = 1
\hspace{12pt}\text{if } Y_p \text{ is of type A1}.
\end{equation}

From \eqref{TRIGCFSTHMPCME004} and \eqref{TRIGCFSTHMPCME005}, $f_* \OO_X(4\Gamma)$ has rank either $2$ or $1$. Using this fact, we are going to deduce that $\alpha$ is regular for $a = 4$ or $a = 5$. To achieve this, we consider separately the two cases that $f_* \OO_X(4\Gamma)$ has rank $1$ respectively rank $2$. In fact when $f_* \OO_X(4\Gamma)$ has rank $2$, then $\alpha$ is regular when $a = 4$. When $f_* \OO_X(4\Gamma)$ has rank $1$, $\alpha$ is regular when $a = 5$.

If $f_* \OO_X(4\Gamma)$ has rank $2$, then \eqref{TRIGCFSTHMPCME004}
and \eqref{TRIGCFSTHMPCME005} imply that
$$
f_* \OO_X(4\Gamma) \otimes \OO_p \not\subset H^0(\OO_{X_p}(3\Gamma))
\cap f_* \OO_X(8\Gamma) \otimes \OO_p
$$
since
$$
\dim f_* \OO_X(4\Gamma) \otimes \OO_p = 2 > 1= \dim H^0(\OO_{X_p}(3\Gamma))
\cap f_* \OO_X(8\Gamma) \otimes \OO_p
$$
for all $p\in C$. Thus, the rational map
$$
\begin{tikzcd}
\alpha: X \arrow[dashed]{r} & S = \PP (f_* \OO_X(4\Gamma))^\vee
\end{tikzcd} 
$$
is regular.

Suppose that $f_* \OO_X(4\Gamma)$ has rank $1$. Then $f_* \OO_X(5\Gamma)$ has rank $2$ since
$$
\dim f_* \OO_X(8\Gamma) \otimes \OO_p - 3\le
\dim f_* \OO_X(5\Gamma) \otimes \OO_p\le
\dim f_* \OO_X(4\Gamma) \otimes \OO_p + 1
$$
for all $p\in C$.

By \eqref{TRIGCFSTHMPCME004},
$$
f_* \OO_X(5\Gamma) \otimes \OO_p \not\subset H^0(\OO_{X_p}(4\Gamma))
\cap f_* \OO_X(8\Gamma) \otimes \OO_p
$$
for $Y_p$ of type A2 or A3. So
the rational map
$$
\begin{tikzcd}
\alpha: X \arrow[dashed]{r} & S = \PP (f_* \OO_X(5\Gamma))^\vee
\end{tikzcd} 
$$
is regular along $X_p$ if $Y_p$ is of type A2 or A3.

Suppose that $J = Y_p$ is of type A1. That is, $J$ is a complete intersection of three quadrics in $W_p \cong \PP^4$. Then
$$
\OO_J(4q)\otimes \OO_J(4q) = \OO_J(8q) = \omega_J
$$
for $q = J\cap \Lambda$.
By a theorem of J. Harris on theta characteristics \cite{38d847b1-588f-3bb6-91c3-8f7be692ac65}, we have
$$
h^0(\OO_J(4q)) \equiv \dim f_* \OO_X(4\Gamma) \otimes \OO_p \equiv 1 \ (\text{mod } 2).
$$
On the other hand, $h^0(\OO_J(4q)) \le 2$ by \eqref{TRIGCFSTHMPCME006}. So $h^0(\OO_J(4q)) = 1$ and it follows that
$$
f_* \OO_X(5\Gamma) \otimes \OO_p \not\subset H^0(\OO_{X_p}(4\Gamma))
\cap f_* \OO_X(8\Gamma) \otimes \OO_p.
$$
Consequently, $\alpha$ is regular along $X_p$ if $Y_p$ is of type A1. Thus, $\alpha$ is regular everywhere.

This proves Corollary \ref{TRIGCFSCORPCM}. We are going to prove Theorem \ref{TRIGCFSTHMPCM} in the next section, which will complete the proof of
Theorem \ref{TRIGCFSTHMMAIN}.

\section{Proof of Theorem \ref{TRIGCFSTHMPCM}}

\subsection{Regularity of $\tau$}

Since the general fibers of $f$ are not hyperelliptic, 
the canonical embedding $\tau: X_p\to Y_p$ is an isomorphism
and hence $Y_p$ is smooth for $p\in C$ general.
Also it is clear that $\tau$ is regular outside $\Gamma$. To see that $\tau$ is regular away from $\Gamma$, we follow the same argument in \cite{CHEN20171033} by basing our analysis of $Y_p$, for each $p \in C$. 

To this end, in what follows, for each $p \in C$, we let $B_p$ be the unique component of $X_p$ that meets the section $\Gamma$. Further, $J_p = \tau(B_p)$ denotes, for each $p \in C$, the proper transform of $B_p$ under $\tau$.

In our study of $Y_p$, for each $p \in C$, the following three facts are significant.
\begin{itemize}
\item There is a net of quadrics in $W_p$ containing $Y_p$, for each $p \in C$.
\item For each $p\in C$, $J_p$ is linearly non-degenerate in $W_p\cong \PP^4$.
\item For each $p \in C$, either $\tau$ maps $B_p$ $2$-to-$1$ onto $J_p$ and $J_p$ is a rational normal curve in $W_p\cong \PP^4$ or $\tau$ maps $B_p$ birationally onto $J_p$ and $J_p$ has arithmetic genus $p_a(J_p)\ge 5$.
\end{itemize}
Indeed, since $\deg Y_p = 8$, the regularity of $\tau$ is then readily deduced.

For instance, if $\tau$ maps $B_p$, for some $p \in C$, $2$-to-$1$ onto $J_p$, then $\deg J_p = 4$ and $J_p$ is a rational normal curve. In this case, since $Y_p$ has no other irreducible component aside from $J_p$, it follows that $\tau$ must be regular along $X_p$.

On the other hand, if $\tau$ maps $B_p$, for some $p \in C$, birationally onto $J_p$, then the regularity of $\tau$ along $X_p$ follows upon establishing that $\deg J_p = 8$. 

It turns out that in proving the the regularity of $\tau$, we also will establish that 
\begin{itemize}
\item $Y_p$ has pure dimension $1$ for all $p\in C$ and hence $Y$ is Cohen-Macaulay;
\item $Y_p$ has type A1, A2 or A3 for each $p\in C$;
\item $Y_p$ is an integral curve if $Y_p$ has type A1 or A2;
\item $Y_p$ is an integral curve or a closed subscheme with multiplicity $2$ along a rational normal curve on a cubic cone if $Y_p$ has type A3.
\end{itemize}

Let us now begin our argument to show that $\tau$ is regular. Let $p \in C$. The net of quadrics containing $Y_p$ comes from the exact sequence
\begin{equation}\label{TRIGCFSTHMPCME518}
\begin{tikzcd}
0 \arrow{r} & \CQ \arrow{r} & \operatorname{Sym}^2 f_*\LL \arrow{r} & f_* (\LL^{\otimes 2})
\end{tikzcd}
\end{equation}
for $\LL = \OO_X(8\Gamma)$. Since the general fibers of $f$ are not hyperelliptic, the map $\operatorname{Sym}^2 f_*\LL \to f_* (\LL^{\otimes 2})$ is generically surjective. So $\CQ$ is a vector bundle of rank $3$ over $C$. For each $p\in C$, $\CQ_p$ can be identified with a subspace
$$
\CQ_p\subset H^0(I_{Y_p}(2))
$$
of dimension $3$, where $I_{Y_p}$ is the ideal sheaf of $Y_p$ in $W_p$.
So $|\CQ_p|$ is the net of quadrics containing $Y_p$.

If $X_p$ is non-trigonal, $Y_p$ is the (scheme-theoretic) base locus $\operatorname{Bs}(\CQ_p)$ of $\CQ_p$. That is, $Y_p$ is a complete intersection of three quadrics. However, if $X_p$ is trigonal, $\operatorname{Bs}(\CQ_p)$ is a rational normal scroll in $W_p$ containing $Y_p$. Another possibility, as we will see, is that $\operatorname{Bs}(\CQ_p)$ is a cubic cone.

By the injectivity of the map (see \ref{TRIGCFSTHMPCME000})
$$
\begin{tikzcd}
f_* \LL \Big|_p = f_* \LL \otimes \OO_p \ar[hook]{r} & H^0(X_p, \LL|_p)
\end{tikzcd}
$$
we see that $J_p = \tau(B_p)$ is linearly non-degenerate in $W_p\cong \PP^4$. So $\deg J_p \ge 4$. And since $\deg Y_p = 8$, we see that $\tau$ maps $B_p$ either birationally or $2$-to-$1$ onto $J_p$.

If $\tau$ maps $B_p$ $2$-to-$1$ onto $J_p$, then $\deg J_p = 4$ and $J_p$ is a rational normal curve. In this case, $Y_p$ has no other irreducible component other than $J_p$ since $\deg Y_p = 8$. It follows that $\tau$ is regular along $X_p$. So to prove the regularity of $\tau$, it suffices to prove that $\deg J_p = 8$ when $\tau$ maps $B_p$ birationally onto $J_p$.

When $\tau$ maps $B_p$ birationally onto its proper transform $J_p$ 
we claim that $p_a(J_p) \ge 5$ \cite[(4.13), p. 1041]{CHEN20171033}. From this, we will deduce, using Lemmas \ref{TRIGCFSLEM3QUADRICS} and \ref{TRIGCFSLEMCUBIC} that $J_p = Y_p$. To see that $p_a(J_p) \ge 5$, this follows from a useful lemma \cite[Lemma 4.6, p. 1043]{CHEN20171033}. We will prove a slightly improved version as follows:

\begin{Lemma}\label{TRIGCFSLEMGENUS}
Let $X$ and $Y$ be two surfaces proper and flat over the unit disk
$\Delta = \{|t| < 1\}$ with fibres $X_t$ and $Y_t$ smooth and irreducible
for $t\ne 0$. Denote by $X_0$ the fibre of $X$ over $t=0$.
Let $\tau: X\dashrightarrow Y$ be a birational map with the diagram
\begin{equation}\label{TRIGCFSE561}
\begin{tikzcd}
X \arrow[dashed]{r}{\tau}\arrow{d} & Y\arrow{dl}\\
\Delta
\end{tikzcd}
\end{equation}
such that
\begin{itemize}
\item
$\tau$ is regular on $X\backslash \Sigma$ for a finite set $\Sigma$ of points on $X_0$,
\item
$X_0$ is smooth at each point $x\in \Sigma$, and
\item
for each component $\Gamma$ of $X_0$ satisfying $\Gamma\cap \Sigma\ne \emptyset$, 
$\tau_*(\Gamma\backslash \Sigma) \ne 0$.
\end{itemize}
Let $J$ be the scheme-theoretic proper transform of $X_0$. That is, $J$ is the closure of the scheme-theoretic image of $X_0\backslash \Sigma$ under $\tau$.
Then $p_a(J) \ge g(Y_t)$ for $t\ne 0$.
\end{Lemma}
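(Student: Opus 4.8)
The plan is to compare the arithmetic genus of $J$ with that of $X_0$ rather than directly with $g(Y_t)$. The starting observation is the identity $p_a(X_0) = g(Y_t)$. Indeed, since $X \to \Delta$ is flat and proper, the Euler characteristic $\chi(\OO_{X_t})$ is constant in $t$, so $p_a(X_0) = p_a(X_t) = g(X_t)$; and since $\tau$ is a birational map of surfaces over $\Delta$, it induces an isomorphism of the smooth proper generic fibres, whence $g(X_t) = g(Y_t)$. Thus it suffices to prove $p_a(J) \ge p_a(X_0)$, i.e.\ that passing from $X_0$ to its scheme-theoretic proper transform cannot lower the arithmetic genus.

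Next I would resolve the indeterminacy of $\tau$ by a common model: choose a smooth surface $Z$ with birational morphisms $\sigma\colon Z \to X$ and $q\colon Z\to Y$ satisfying $q = \tau\circ\sigma$, where $\sigma$ is a composition of blow-ups supported over the finite set $\Sigma$. Because each point of $\Sigma$ is a \emph{smooth} point of $X_0$, and blowing up a smooth point of a curve (together with the successive infinitely near smooth points) leaves the proper transform unchanged, the proper transform $\widetilde{X_0}\subset Z$ of $X_0$ is isomorphic to $X_0$; in particular $p_a(\widetilde{X_0}) = p_a(X_0) = g(Y_t)$. By construction $q(\widetilde{X_0}) = J$, and the hypothesis $\tau_*(\Gamma\setminus\Sigma)\ne 0$ for each component $\Gamma$ meeting $\Sigma$ guarantees that no component of $\widetilde{X_0}$ is contracted, so the induced map $\varphi\colon \widetilde{X_0}\to J$ is a finite surjection of proper curves.

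The heart of the matter is to show that $\varphi$ is \emph{birational}, i.e.\ of degree one. Granting this, the conclusion is immediate: since $J$ is the scheme-theoretic image, one has an injection $\OO_J \hookrightarrow \varphi_*\OO_{\widetilde{X_0}}$, and birationality of the finite map $\varphi$ forces its cokernel to be a skyscraper sheaf of finite length; hence $\chi(\OO_{\widetilde{X_0}}) = \chi(\varphi_*\OO_{\widetilde{X_0}}) \ge \chi(\OO_J)$, which upon using $\chi(\OO) = 1 - p_a$ rearranges to $p_a(J) \ge p_a(\widetilde{X_0}) = g(Y_t)$, as desired. To see that $\varphi$ has degree one I would argue that the inverse birational map from $Y$ to $X$, being a rational map out of a surface, is defined at the generic point of each component of $J$, so that a general point of $J$ has a single preimage on $X_0$; equivalently, $\tau$ does not fold $X_0$ onto $J$.

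The main obstacle is precisely this last point. The inequality genuinely requires degree one: a degree-two (hyperelliptic-type) fold would reverse the $\chi$-comparison, since the cokernel of $\OO_J \hookrightarrow \varphi_*\OO_{\widetilde{X_0}}$ would then be a sheaf of positive rank on $J$ rather than a skyscraper, and the desired bound would fail --- exactly as in the parallel $2$-to-$1$ alternative encountered in the regularity discussion for $\tau$. Degree one is automatic once $Y$ is normal along $J$, for then the inverse rational map has no codimension-one indeterminacy; so I would either invoke normality of $Y$ or reduce to it by replacing $Y$ with a resolution, noting that resolving can only decrease the arithmetic genus of the proper transform and hence only strengthens the bound. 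Controlling the behaviour of $\tau$ over the (possibly) non-normal locus of $Y$ is the delicate step that must be handled with care.
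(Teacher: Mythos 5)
Your reduction to the inequality $p_a(J)\ge p_a(X_0)$ is legitimate (flatness of $X/\Delta$ gives $\chi(\OO_{X_0})=\chi(\OO_{X_t})$, and $\tau$ restricts to an isomorphism $X_t\cong Y_t$ for $t\ne 0$), and your local analysis at $\Sigma$ is correct. But the core step has a genuine gap: the Euler-characteristic comparison $\chi(\OO_{\widetilde{X_0}})\ge\chi(\OO_J)$ requires $\varphi\colon\widetilde{X_0}\to J$ to be \emph{finite}, and the hypotheses do not give you that. The non-contraction assumption applies only to components of $X_0$ meeting $\Sigma$; a component $\Gamma_0$ disjoint from $\Sigma$ (where $\tau$ is regular) may perfectly well be contracted to a point — in the intended application such vertical contracted components certainly occur. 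Then $R^1\varphi_*\OO_{\widetilde{X_0}}\ne 0$, so $\chi(\varphi_*\OO_{\widetilde{X_0}})=\chi(\OO_{\widetilde{X_0}})+\ell(R^1\varphi_*\OO_{\widetilde{X_0}})$, and the injection $\OO_J\hookrightarrow\varphi_*\OO_{\widetilde{X_0}}$ (whose cokernel now has positive length at the contracted points) no longer yields the inequality you want. Discarding the contracted components does not repair this: deleting a component of positive genus, or one attached to the rest of $X_0$ at two or more points (e.g.\ the exceptional cycle of a non-rational singularity of $Y$), strictly lowers the arithmetic genus, which is exactly the loss you cannot afford. This is why the paper's proof runs the comparison through the \emph{target}: it compares $p_a(J)$ with $p_a(Y_0)=g(Y_t)$ (flatness of $Y$), and the whole content is to show that the components of $Y_0$ not lying on $J$ — which all arise from exceptional divisors over the smooth points of $\Sigma$ and are therefore, after semistable reduction, trees of rational curves meeting $\widehat{J}$ transversely at single points — contribute nothing to $p_a$. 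Contracted components of $X_0$ are then harmless because they simply do not appear in $\widehat{Y}_0$.

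Two further points. First, your reduction to $Y$ normal is asserted rather than proved: the inequality $p_a(J)\ge p_a(\widetilde{J})$ for the proper transform in the normalization is not a formality, since $J$ need not be reduced, $\nu_Y$ is not flat, and the scheme-theoretic image downstairs can acquire embedded points; the paper spends the first half of its proof on exactly this, via the injection $\nu_Y^*\colon H^0(\OO_{\bar J}(nA))\hookrightarrow H^0(\OO_{\widetilde J}(n\nu_Y^*A))$ and a Hilbert-polynomial comparison. You correctly flag this as the delicate step, but the argument needs to be supplied (and one must also normalize $X$ so that $\tau$ lifts to $\widetilde Y$). Second, granting normality, your degree-one argument is essentially right, though it should be phrased carefully: you need $\tau^{-1}\circ\tau=\mathrm{id}$ near the generic point of \emph{each} component of $X_0$ dominating $J$ (using that $X$ is integral and separated) both to rule out two components with the same image and to match scheme-theoretic multiplicities, not merely that a general point of $J$ has some preimage where $\tau^{-1}$ is defined.
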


\begin{proof}
We may replace $X$ and $Y$ by their normalizations $\widetilde{X}$ and $\widetilde{Y}$ with the diagram
$$
\begin{tikzcd}
\widetilde{X} \ar{d}{\nu_{X}}\ar[dashed]{r}{\widetilde{\tau}} & \widetilde{Y} \ar{d}{\nu_{Y}}\\
X \ar[dashed]{r}{\tau} & Y
\end{tikzcd}
$$
Clearly, $\nu_{_X}$ is a local isomorphism at $\Sigma$ and $\widetilde{\tau}$ is regular on $\widetilde{X} \backslash \nu_{X}^{-1}(\Sigma)$. Let $\widetilde{J}$ be the proper transform of $\widetilde{X}_0$ under $\widetilde{\tau}$. We claim that
\begin{equation}\label{TRIGCFSLEMGENUSE000}
p_a(J) \ge p_a(\widetilde{J}) 
\end{equation}

Let $\bar{J}\subset Y$ be the scheme-theoretic image of $\widetilde{J}$ under $\nu_{Y}$. By our definition of $J$, $J$ is a closed subscheme of $\bar{J}$ and $J$ and $\bar{J}$ agree outside of a finite set of points. But $\bar{J}$ might have embedded points not contained in $J$. So
\begin{equation}\label{TRIGCFSLEMGENUSE005}
p_a(J) \ge p_a(\bar{J}) 
\end{equation} 
We claim that
\begin{equation}\label{TRIGCFSLEMGENUSE006}
p_a(\bar{J}) \ge p_a(\widetilde{J}) 
\end{equation}
which obviously implies \eqref{TRIGCFSLEMGENUSE000}.

Fixing an ample divisor $A$ on $Y$, the corresponding Hilbert polynomials of $\bar{J}$ and $\widetilde{J}$ are
\begin{equation}\label{TRIGCFSLEMGENUSE001}
h^0(\OO_{\bar{J}}(nA))= n A.\bar{J} - p_a(\bar{J}) + 1
\end{equation}
and
\begin{equation}\label{TRIGCFSLEMGENUSE002}
h^0(\OO_{\widetilde{J}}(n \nu_{Y}^* A)) = n \nu_{Y}^* A.\widetilde{J} - p_a(\widetilde{J}) + 1
\end{equation}
for $n\gg0$. Since
$$
\begin{aligned}
\nu_{Y}^* H^0(\OO_Y(nA)) &\subset H^0(\OO_{\widetilde{Y}}(n \nu_{Y}^* A)) \hspace{12pt}\text{and}\\
\nu_{Y}^* H^0(\OO_Y(nA))\cap H^0 (\OO_{\widetilde{Y}}(n\nu_{Y}^* A)\otimes I_{\widetilde{J}})
&= \nu_{Y}^* H^0(\OO_Y(nA) \otimes I_{\bar{J}}),
\end{aligned}
$$
$\nu_Y^*$ induces an injection
$$
\begin{tikzcd}
H^0(\OO_{\bar{J}}(nA))  \ar[hook]{r}{\nu_Y^*} \ar[equal]{d}  & H^0(\OO_{\widetilde{J}}(n \nu_{Y}^* A)) \ar[equal]{d}\\
\dfrac{H^0(\OO_Y(nA))}{H^0(\OO_Y(nA) \otimes I_{\bar{J}})} \ar[hook]{r}{\nu_Y^*} & \dfrac{H^0(\OO_{\widetilde{Y}}(n \nu_{Y}^* A))}{H^0 (\OO_{\widetilde{Y}}(n\nu_{Y}^* A)\otimes I_{\widetilde{J}})}
\end{tikzcd}
$$
where $I_{\bar{J}}$ and $I_{\widetilde{J}}$ are the ideal sheaves of $\bar{J}$ and $\widetilde{J}$ in $Y$ and $\widetilde{Y}$, respectively. Thus,
\begin{equation}\label{TRIGCFSLEMGENUSE003}
h^0(\OO_{\bar{J}}(nA)) \le h^0(\OO_{\widetilde{J}}(n \nu_{Y}^* A)).
\end{equation}
Combining \eqref{TRIGCFSLEMGENUSE001}-\eqref{TRIGCFSLEMGENUSE003} and the fact that
\begin{equation}\label{TRIGCFSLEMGENUSE004}
A.\bar{J} = \nu_{Y}^* A.\widetilde{J},
\end{equation}
we conclude \eqref{TRIGCFSLEMGENUSE006} and thus \eqref{TRIGCFSLEMGENUSE000}. 
We just have to prove $p_a(\widetilde{J}) \ge p_a(\widetilde{Y}_t)$ since it implies that
$$
p_a(J) \ge p_a(\widetilde{J})\ge p_a(\widetilde{Y}_t) = p_a(Y_t).
$$
So we may simply assume that both $X$ and $Y$ are normal.
In addition, we may normalize $Y$ after a base change. After a suitable base change, the normalization of $Y$ has reduced central fiber. So we may further assume that $Y_0$ is reduced.

Let $\tau_0: X_0\dashrightarrow Y$ be the restriction of $\tau$ to $X_0$. Since $X_0$ is smooth at $\Sigma$, $\tau_0$ extends to a regular morphism on $X_0$. By our hypothesis, $\tau_0: X_0\to J$ is finite at $\Sigma$. 

Let
$$
\begin{tikzcd}
\widehat{X}\ar{d}{f} \ar{dr}{\widehat{\tau}}\\
X \ar[dashed]{r}{\tau} & Y
\end{tikzcd}
$$
be a resolution of the rational map $\tau$, where $f$ is a birational morphism consisting of a sequence of blowups of points over $\Sigma$. 

We see that $\tau_0^{-1}(q) = p$ for every $p\in \Sigma$ and $q = \tau_0(p)$. Otherwise, if there exists $p' \ne p$ such that $\tau_0(p') = q$, then the preimage $\widehat{\tau}^{-1}(q)$ has at least two connected components since $\tau_0: X_0\to J$ is finite at $p$. This is impossible since $Y$ is normal and $\widehat{\tau}$ has connected fibers. Therefore, the map $\tau_0: \Sigma\to \tau_0(\Sigma)$ is one-to-one. This also implies that $J$ has a locally irreducible singularity at $\tau_0(p)$ for every $p\in \Sigma$.

For a point $p\in \Sigma$, we let $E_p = f^{-1}(p)$ and $F_p = \widehat{\tau}(E_p)$.
Again, since $\widehat{\tau}$ has connected fibers, $F_p \cap F_q = \emptyset$ for all $p\ne q\in \Sigma$ and $F_p\cap J = \tau_0(p)$ for all $p\in \Sigma$. So
$$
E = \widehat{\tau}^{-1}(F) \hspace{12pt} \text{for } E =\bigcup_{p\in \Sigma} E_p
\text{ and } F =\bigcup_{p\in \Sigma} F_p.
$$

After we apply stable reductions to $X$ and $Y$ along $E$ and $F$, we may assume that $X_0$ and $Y_0$ have simple normal crossings in open neighborhoods of $E$ and $F$. More precisely, we have a diagram
$$
\begin{tikzcd}
\widehat{X}\ar{d}{f} \ar{r}{\widehat{\tau}} & \widehat{Y}\ar{d}{g}\\
X \ar[dashed]{r}{\tau} & Y
\end{tikzcd}
$$
after a base change, where $f: \widehat{X}\to X$ and $g:\widehat{Y}\to Y$ are birational morphisms such that $\widehat{X}_0$ and $\widehat{Y}_0$ have simple normal crossings in open neighborhoods of $E_p = f^{-1}(p)$ and $F_p = \widehat{\tau}(E_p)$ for all $p\in\Sigma$, respectively, and $f: \widehat{X}\backslash E\to X\backslash \Sigma$ and $g: \widehat{Y}\backslash F\to Y\backslash g(F)$ are finite. 

The central fiber $\widehat{Y}_0$ of $\widehat{Y}/\Delta$ is a union
$$
\widehat{Y}_0 = \widehat{J} \cup F
$$
where $\widehat{J}$ is the proper transform of $J$ under $g$. 
The map $g: \widehat{J}\to J$ resolves the singularities of $J$ at $\tau_0(\Sigma)$.
So $p_a(\widehat{J}) \le p_a(J)$.

Since $X_0$ is smooth at every point $p\in \Sigma$, $p_a(E_p) = 0$. Hence $p_a (F_p) = 0$
for all $p\in \Sigma$. Since $\widehat{Y}_0$ has simple normal crossings in an open neighborhood of $F_p$, each $F_p$ meets $\widehat{J}$ transversely at a single point. In addition, $F_p\cap F_q = \emptyset$ for all $p\ne q\in \Sigma$. Therefore,
$$
p_a(J) \ge p_a(\widehat{J}) = p_a(\widehat{Y}_0) = p_a(Y_t).
$$
\end{proof}

Compared with \cite[Lemma 4.6, p. 1043]{CHEN20171033}, we do not require $J$ to be reduced.

Regardless of whether $\tau$ maps $B_p$ birationally onto $J_p$, since $J_p$ is linearly non-degenerate, every quadric of $|\CQ_p|$ is integral, i.e., has rank $\ge 3$. Thus, there do not exist two quadrics $P\ne Q$ in $|\CQ_p|$ of rank $3$ such that $P$ and $Q$ have the same singular locus;
otherwise, there is a quadric in $\Span\{P, Q\}$ of rank $< 3$. 

\begin{Lemma}\label{TRIGCFSLEM3QUADRICS}
Let $\CQ$ be a net of quadrics in $\PP^4$. Suppose that $\operatorname{Bs}(\CQ)$ has dimension one and it contains a linearly non-degenerate integral curve $J$. Then $\operatorname{Bs}(\CQ)$ is reduced at a general point of $J$. In addition, if $p_a(J)\ge 5$, $J = \operatorname{Bs}(\CQ)$.
\end{Lemma}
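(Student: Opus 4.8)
The plan is to first turn the hypotheses into structural information about $B := \operatorname{Bs}(\CQ)$. If $Q_1,Q_2,Q_3$ span $\CQ$ then $B = Q_1\cap Q_2\cap Q_3$, and since $\dim B = 1 = 4-3$ the three quadrics meet properly; hence $B$ is a complete intersection, Cohen--Macaulay and purely one-dimensional, with $\deg B = 8$ and, by adjunction $\omega_B = \OO_B(1)$, arithmetic genus $p_a(B) = 5$. I would also record at the outset that every member of $\CQ$ is irreducible: a quadric of rank $\le 2$ is a union of hyperplanes, and an integral curve inside it would lie in one of them, contradicting that $J$ is linearly non-degenerate. So every $Q\in\CQ$ has rank $\ge 3$.

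For the second assertion I would argue by degrees. Since $J\subseteq B$ and $\dim B = 1$, we have $\deg J \le \deg B = 8$. By Castelnuovo's bound for the arithmetic genus of an integral non-degenerate curve in $\PP^4$, a curve of degree $d\le 7$ satisfies $p_a \le \pi(d,4)\le \pi(7,4)=3$. Hence $p_a(J)\ge 5$ forces $\deg J = 8 = \deg B$. As $J\subseteq B$ with $B$ Cohen--Macaulay and purely one-dimensional, equality of degrees gives $[B]=[J]$ as cycles, and since $B$ has no embedded points and multiplicity one along $J$, we conclude $B = J$ as schemes.

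For the first assertion, let $m$ be the multiplicity of $J$ in the cycle $[B]$. Because $J$ is non-degenerate, $\deg J \ge 4$, so $8 = \deg B \ge m\deg J \ge 4m$ and therefore $m\le 2$. The case $m=1$ is exactly the assertion that $B$ is reduced at a general point of $J$, so the entire content is to exclude $m=2$. In that case $\deg J = 4$, forcing $J$ to be a rational normal curve (the unique integral non-degenerate curve of minimal degree, with $p_a(J)=0$), there is no other component, and $B = 2J$ is a ribbon on $J$.

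Excluding this ribbon is the main obstacle. I first note that a naive adjunction on $S = Q_1\cap Q_2$ does \emph{not} work: the condition $m=2$ forces the normal gradients of any pencil in $\CQ$ to drop rank at finitely many points of $J$, so $S$ is singular along $J$, and on the minimal resolution the exceptional contributions cancel exactly, producing no genus contradiction. Instead I would exploit the trigonal nature of a rational-normal-curve ribbon, in the spirit of Saint-Donat \cite{Saint-Donat1973}. The equality $m=2$ is equivalent to saying that the (right-exact) conormal map $N^*_{B/\PP^4}\!\mid_J \to N^*_{J/\PP^4}$, a $3\times 3$ matrix of binary quadrics whose cokernel $N^*_{J/B}$ is the line bundle $\OO_{\PP^1}(-6)$, has generic rank $2$; equivalently, the normal gradients of $Q_1,Q_2,Q_3$ are everywhere linearly dependent along $J$. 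I expect this everywhere-dependence to force all quadrics through $2J$ to contain a common rational normal scroll (or cubic cone) $R\supseteq J$. Granting this, $\CQ\subseteq |I_R(2)|$, so $R\subseteq\operatorname{Bs}(\CQ)$, contradicting $\dim\operatorname{Bs}(\CQ)=1$. The delicate point, where the geometry specific to the degree-$4$ rational normal curve enters, is precisely to produce this common two-dimensional base locus for the quadrics through the ribbon.
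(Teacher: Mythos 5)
Your proof of the second assertion is correct and takes a genuinely different, and shorter, route than the paper's. From $p_a(J)\ge 5$ and Castelnuovo's bound for the arithmetic genus of an integral non-degenerate curve in $\PP^4$ (which does apply to $p_a$, via the general position theorem for hyperplane sections of integral curves) you get $\deg J=8=\deg\operatorname{Bs}(\CQ)$, and equality of degrees inside the purely one-dimensional Cohen--Macaulay complete intersection forces $J=\operatorname{Bs}(\CQ)$. The paper instead realizes the base locus of a general pencil in $\CQ$ as a canonical quartic Del Pezzo surface and computes $p_a(J)$ on its minimal resolution via a Zariski decomposition and Kawamata--Viehweg vanishing; your argument bypasses all of that, and, importantly, it does not use the first assertion at all.

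For the first assertion your reduction is right: the only case to exclude is $\operatorname{Bs}(\CQ)=2J$, a ribbon of arithmetic genus $5$ on a rational normal quartic $J$. But the exclusion itself, which you correctly identify as the entire content, is left as an expectation (``I expect this \ldots to force all quadrics through $2J$ to contain a common scroll \ldots Granting this \ldots''). This gap is fatal, because the step you hope for fails: such ribbons do occur as one-dimensional base loci of nets. Take $q_1=x_0x_2-x_1^2$, $q_2=x_2x_4-x_3^2$, $q_3=x_0x_4-2x_1x_3+x_2^2$, all vanishing on $J=\{[s^4:s^3t:s^2t^2:st^3:t^4]\}$. On $\{x_2\ne 0\}$ the first two equations give $x_0=x_1^2$ and $x_4=x_3^2$, whereupon $q_3=(x_1x_3-1)^2$; on $\{x_0\ne 0\}$ one finds $q_2\equiv -(x_3-x_1^3)^2$ modulo $(q_1,q_3)$, and symmetrically on $\{x_4\ne 0\}$. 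Hence $\operatorname{Bs}(\CQ)$ is purely one-dimensional, supported on $J$, and nowhere reduced: it is exactly a genus-$5$ ribbon on $J$, and the three quadrics have no common surface. So no argument can produce the scroll or cone you want, and indeed the first assertion is false for this net as stated.

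For comparison, the paper's treatment of the same case takes a general pencil $\calL\subset\CQ$, argues that $S=\operatorname{Bs}(\calL)$ is a quartic Del Pezzo with at worst $A_m$ singularities, writes $2J=\operatorname{Bs}(\CQ)\in|-2K_S|$ and concludes $J\in|-K_S|$, contradicting non-degeneracy. The example above shows exactly where that is delicate: there $S$ acquires $A_1$ points all lying on $J$, so $J$ is only a Weil divisor on $S$, and $2J\sim -2K_S$ in the class group does not force $J\sim -K_S$ when $\operatorname{Cl}(S)$ has $2$-torsion (which happens for a $4A_1$ quartic Del Pezzo on whose resolution $E_1+E_2+E_3+E_4$ is $2$-divisible --- precisely what $\pi^*(2J)=2\widehat{J}+\sum E_i=-2K_{\widehat{S}}$ forces here). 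So the multiplicity-two case genuinely occurs; closing your gap would require extra hypotheses reflecting how the lemma is actually applied, not the scroll argument you sketch. Your Castelnuovo proof of the second assertion is unaffected by all of this.
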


\begin{proof}
Let $Y = \operatorname{Bs}(\CQ)$. For every point $p\in Y$, a general member of $\CQ$ is smooth at $p$; otherwise, $Y$ is a complete intersection of three cones over quadric surfaces with common vertex $p$, which is a union of lines and hence cannot containing a linearly non-degenerate component. In other words, if we let
$$
D_p = \big\{Q\in \CQ: Q \text{ is singular at } p\}
$$
then $\dim D_p \le 1$ for all $p\in Y$.

If $Y$ contains an irreducible component $\Gamma$ with multiplicity $\ge 4$, then $\Gamma \ne J$ since $\deg J \ge 4$ and $\deg Y = 8$. So $\deg J \le \deg Y - 4\deg \Gamma \le 4$. Hence $\deg J = 4$ and $Y$ is reduced at a general point of $J$.

Suppose that every irreducible component of $Y$ has multiplicity $\le 3$. Then at a general point $p\in Y$, two general quadrics of $\CQ$ intersect transversely at $p$. In other words, $\dim D_p = 0$ for a general point $p\in Y$. It follows that for a general pencil (i.e. lines) $\calL\subset \CQ$, the base locus $\operatorname{Bs}(\calL)$ of $\calL$ is a surface with isolated singularities.

Let $\Sigma \subset Y$ be the set of points $p$ such that $\dim D_p = 1$. Then $\dim \Sigma = 0$. For every $p\in \Sigma$, a general member $Q$ of $D_p$ is a quadric of rank $4$, i.e., a cone over smooth quadric surface. Hence for every pencil $\calL\subset \CQ$ such that $Q\in \calL$ and $\calL\not\subset D_p$, the surface $\operatorname{Bs}(\calL)$ has a
singularity at $p$ whose tangent cone is a conic curve of rank $\ge 2$. For a general pencil $\calL\subset \CQ$, since $\operatorname{Bs}(\calL)$ has only isolated singularities, it has at worst an $A_m$ singularity at $p$.

For every $p\in Y\backslash \Sigma$, $\dim D_p = 0$. For a pencil $\calL \subset \CQ$, if $\calL \cap D_p = \emptyset$, then $\operatorname{Bs}(\calL)$ is smooth at $p$. On the other hand, for a general pencil $\calL\subset \CQ$ such that $\calL \cap D_p \ne \emptyset$, $\operatorname{Bs}(\calL)$ has a singularity at $p$ whose tangent cone is a conic curve of rank $\ge 2$. In other words, there are at most finitely many pencils $\calL\subset \CQ$ such that $\operatorname{Bs}(\calL)$ has a singularity at $p$ whose tangent cone is a conic curve of rank $1$.
Again, for a general pencil $\calL\subset \CQ$, since $\operatorname{Bs}(\calL)$ has only isolated singularities, it has at worst an $A_m$ singularity at every $p\in Y\backslash \Sigma$.

In conclusion, for a general pencil $\calL \subset \CQ$, $S = \operatorname{Bs}(\calL)$ is a surface with at worst $A_m$ singularities. That is, $S$ is a canonical Del Pezzo surface.

If $Y$ is nonreduced along $J$, then $Y = 2J\in |-2K_S|$ on $S$ since $\deg J \ge 4$ and $\deg Y = 8$. Then $J\in |-K_S|$ since $H^1(\OO_S) = 0$ and $K_S$ is Cartier. That is, $J\in |\OO_S(1)|$ is cut out by a hyperplane on $S$. This contradicts the hypothesis that $J$ is linearly non-degenerate. So $Y$ is reduced at a general point of $J$.

Suppose that $p_a(J)\ge 5$. We have $J\subset Y \in |-2K_S|$. It remains to show that $J = Y$.
Suppose that
$$
Y = J + P
$$
Note that both $J$ and $P$ are effective $\QQ$-Cartier divisors. So we cannot apply adjunction directly to $J$ to compute its arithmetic genus.

Let $\pi: \widehat{S}\to S$ be the minimal resolution of $S$. Then
$K_{\widehat{S}} = \pi^* K_S$. Suppose that
$$
\pi^* Y = \pi^* J + \pi^* P = \widehat{J} + R + \pi^* P
$$
where $\widehat{J}$ is the proper transform of $J$ under $\pi$ and $R = \pi^* J - \widehat{J}$ is a $\QQ$-effective divisor supported on the exceptional divisors of $\pi$. Then
\begin{equation}\label{TRIGCFSTHMPCME007}
- n K_{\widehat{S}} - \widehat{J} = (-(n-2) K_{\widehat{S}} + \pi^* P) + R.
\end{equation}

When $n\gg0$,
$$-(n-2) K_{\widehat{S}} + \pi^* P = -(n-2)\pi^* K_S + \pi^* P$$
is nef. So
\eqref{TRIGCFSTHMPCME007}
is the Zariski decomposition of the left hand side since
$$
(-(n-2)\pi^* K_S + \pi^* P) E = 0.
$$
for all exceptional divisors $E$ of $\pi$.

The Hilbert polynomial of $J$ with respect to $-K_S$ is given by
\begin{equation}\label{TRIGCFSTHMPCME008}
\begin{aligned}
\chi(\OO_J(-nK_S)) &= h^0(\OO_S(-nK_S)) - h^0(I_J(-nK_S))
\\
&= h^0(\OO_{\widehat{S}}(-nK_{\widehat{S}})  - h^0(\OO_{\widehat{S}}(-nK_{\widehat{S}} - \widehat{J})).
\end{aligned}
\end{equation}
for $n\gg0$, where $I_J$ is the ideal sheaf of $J$ in $S$.
By \eqref{TRIGCFSTHMPCME007},
\begin{equation}\label{TRIGCFSTHMPCME009}
H^0(\OO_{\widehat{S}}(-nK_{\widehat{S}} - \widehat{J}))
=
H^0(\OO_{\widehat{S}}(-nK_{\widehat{S}} - \widehat{J} - \lfloor R\rfloor))
\end{equation}
By Kwamata-Viehweg vanishing (cf. \cite{E-V}), we have
\begin{equation}\label{TRIGCFSTHMPCME010}
\begin{aligned}
&\quad H^i(\OO_{\widehat{S}}(-nK_{\widehat{S}} - \widehat{J} - \lfloor R\rfloor))\\
&= H^i(\OO_{\widehat{S}}(K_{\widehat{S}} + (-n+1) K_{\widehat{S}} + \pi^*P + \Delta)) = 0
\end{aligned}
\end{equation}
for $i\ge 1$ and $n\gg0$, where $\Delta = R - \lfloor R\rfloor$. Combining \eqref{TRIGCFSTHMPCME008}-\eqref{TRIGCFSTHMPCME010}, we conclude
$$
\chi(\OO_J(-nK_S)) 
= \chi(\OO_{\widehat{S}}(-nK_{\widehat{S}})  - \chi(\OO_{\widehat{S}}(-nK_{\widehat{S}} - \widehat{J} - \lfloor R \rfloor))
$$
and hence
$$
2p_a(J) - 2 = (K_{\widehat{S}} + \widehat{J} + \lfloor R \rfloor)
(\widehat{J} + \lfloor R \rfloor).
$$
Thus,
$$
\begin{aligned}
8\le 2p_a(J) - 2 &= (K_{\widehat{S}} + \widehat{J} + \lfloor R \rfloor)
(\widehat{J} + \lfloor R \rfloor)\\
&= (K_{\widehat{S}} + \pi^* J - \Delta)
(\pi^* J - \Delta)\\
&\le (K_{\widehat{S}} + (\pi^* J - \Delta) + (\pi^* P + \Delta))
(\pi^* J - \Delta)\\
&= -K_{\widehat{S}} (\pi^* J - \Delta)\\
&\le -K_{\widehat{S}} ((\pi^* J - \Delta) + (\pi^* P + \Delta))\\
&= 2K_{\widehat{S}}^2 = 8
\end{aligned}
$$
where the equality holds if and only if
$$
\begin{aligned}
(\pi^* J - \Delta) (\pi^* P + \Delta) = JP - \Delta^2 &= 0\hspace{12pt} \text{and}\\
-K_{\widehat{S}} (\pi^* P + \Delta) &= 0
\end{aligned}
$$
This holds if and only if $P=0$. So $J = Y$.
\end{proof}

The above lemma shows that $J_p = Y_p = \operatorname{Bs}(\CQ_p)$
if $\dim \operatorname{Bs}(\CQ_p) = 1$. In other words, $Y_p$ has type A1 if $\dim \operatorname{Bs}(\CQ_p) = 1$ and $\tau$ is regular along such $Y_p$.

Since every quadric in $\CQ_p$ is integral, $\dim \operatorname{Bs}(\CQ_p) \le 2$. Suppose that $\dim \operatorname{Bs}(\CQ_p) = 2$. 

Let $P\ne Q$ be two distinct quadrics in $\CQ_p$. Then $P\cap Q$ is a surface of degree $4$. Clearly, $P\cap Q$ cannot be an integral surface of degree $4$; otherwise, $\dim \operatorname{Bs}(\CQ_p) = 1$. And since every surface of degree $\le 2$ in $\PP^4$ is linearly degenerate and $J_p\subset P\cap Q$ is linearly non-degenerate, $P\cap Q$ must have a component which is a linearly non-degenerate integral cubic surface. We claim that every linearly non-degenerate integral surface in $\PP^4$ is either a rational normal scroll or a cubic cone, as defined in Section \ref{TRIGCFSSUBSECPCM}.

\begin{Lemma}\label{TRIGCFSLEMCUBIC}
Let $R\subset \PP^4$ be an integral (i.e. reduced and irreducible) surface of degree $3$. If $R$ is not linearly degenerate, i.e., not contained in any hyperplane, then $R$ is either a rational normal scroll or a cubic cone with Hilbert polynomial $(3n+2)(n+1)/2$.
\end{Lemma}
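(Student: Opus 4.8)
The plan is to recognize $R$ as a surface of minimal degree and then classify it. Since $R$ is irreducible, non-degenerate, and $\deg R = 3 = \codim R + 1$ in $\PP^4$, it is a variety of minimal degree, and the whole statement is an instance of the del Pezzo--Bertini classification specialized to surfaces of degree $3$. As a first reduction I would cut with a general hyperplane $H$: by Bertini (we are in characteristic $0$), $\Gamma = R\cap H$ is an integral non-degenerate curve of degree $3$ in $H\cong\PP^3$. An integral non-degenerate curve of degree $d$ in $\PP^r$ satisfies $d\ge r$ with equality forcing a rational normal curve, so $\Gamma$ is a smooth twisted cubic with $p_a(\Gamma)=0$. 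This both pins down the general hyperplane section and will feed the arithmetic genus computation.

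Next I would invoke the classification of irreducible non-degenerate varieties of minimal degree: each such variety is a quadric hypersurface, the Veronese surface, a rational normal scroll, or a cone over one of these. The quadric hypersurface case is excluded since it would force degree $2$, and the Veronese surface is excluded since it has degree $4$ in $\PP^5$ (and its projections to $\PP^4$ have degree $4\ne 3$). Hence $R$ is a rational normal scroll $S(a,b)$ with $a+b=3$ and $0\le a\le b$, where the cone case corresponds to $a=0$. The two possibilities are therefore $(a,b)=(1,2)$, giving the smooth scroll $S(1,2)\cong\FF_1$ embedded by $|B+2F|$, and $(a,b)=(0,3)$, giving the cone over the twisted cubic $\Gamma\subset\PP^3$, i.e. the cubic cone. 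Equivalently, one can organize this as a dichotomy between $R$ smooth (yielding $\FF_1$) and $R$ singular: a singular minimal-degree surface must be a cone, with vertex the singular point and base the twisted cubic obtained as a general hyperplane section.

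It remains to compute the Hilbert polynomial. In the scroll case I would apply Riemann--Roch on $\FF_1$ to $H=B+2F$, using $K_{\FF_1}=-2B-3F$, $B^2=-1$, $BF=1$, $F^2=0$ and $\chi(\OO_{\FF_1})=1$; a direct calculation gives $H(H-K_{\FF_1})=3n^2+5n$ for $nH$, hence
$$
\chi(\OO_R(nH)) = 1 + \tfrac{1}{2}(3n^2+5n) = \frac{(3n+2)(n+1)}{2}.
$$
For the cubic cone I would pass to the resolution $\pi\colon\FF_3\to R$ with $\pi^*\OO_R(1)=\OO_{\FF_3}(B+3F)$ and carry out the same Riemann--Roch computation (or simply note that both surfaces are arithmetically Cohen--Macaulay of degree $3$ with $p_a=0$, so share the Hilbert polynomial determined by its leading coefficient $3/2$, the value $\chi(\OO_R)=1$, and the hyperplane section data), again obtaining $(3n+2)(n+1)/2$.

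The main obstacle is the classification step: if one wishes to avoid simply citing del Pezzo--Bertini, the genuine work is to produce the line ruling on $R$ directly from minimal degree and to show that a singular such surface is necessarily a cone. A self-contained route would project $R$ birationally from a general smooth point to obtain a degree $3-1=2$ surface (a quadric) in $\PP^3$, and then reconstruct the scroll or cone structure from this quadric; verifying that the projection is birational and that the exceptional behavior matches exactly the $\FF_1$ versus cone dichotomy is the delicate point, together with the careful use of Bertini irreducibility for the hyperplane section.
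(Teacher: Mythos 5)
Your proposal is correct, but it reaches the conclusion by a genuinely different route from the paper. You identify $R$ as a variety of minimal degree ($\deg R = \codim R + 1$) and invoke the del Pezzo--Bertini classification, after which only the smooth scroll $S(1,2)\cong \FF_1$ and the cone $S(0,3)$ over a twisted cubic survive the degree count; the Hilbert polynomial then drops out of Riemann--Roch on $\FF_1$ and $\FF_3$ exactly as you compute. The paper never cites that classification and argues directly. Its key observation is that the Hilbert polynomial of $R\cap\Lambda$ is the same for \emph{every} hyperplane $\Lambda$ (from the exact sequence $0\to\OO_R(-1)\to\OO_R\to\OO_{R\cap\Lambda}\to 0$ it equals $\chi(\OO_R(n))-\chi(\OO_R(n-1))$, hence is independent of $\Lambda$ and equals $3n+1$, the value for the twisted-cubic section at general $\Lambda$). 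Consequently a hyperplane section through a singular point of $R$ cannot be an integral singular cubic curve --- its Hilbert polynomial would differ --- so it must contain a line, and $R$ is a cone with vertex at that point. In the smooth case the paper produces a line $L$ through a general point $p$ from the system $|\OO_R(1)\otimes I_p^2|$, shows $L^2=0$ by playing $L$ against the moving part of the hyperplane sections containing $L$, and thus exhibits the ruling realizing $R\cong\FF_1$ embedded by $|B+2F|$. Your route buys brevity at the cost of outsourcing the real content to a classical theorem --- as you yourself note, the classification is the genuine work, and the projection-from-a-general-point argument you sketch as a substitute is essentially the standard proof of that theorem and would need the care you describe. The paper's route is longer but self-contained, and its Hilbert-polynomial rigidity mechanism is the same one reused for the flatness claim in Lemma \ref{TRIGCFSLEMCUBICFIB}. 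Both approaches land on the same two surfaces and the same polynomial $(3n+2)(n+1)/2$.
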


\begin{proof}
Since $R$ is linearly non-degenerate, $R\cap \Lambda$ must be linearly 
non-degenerate and hence a rational normal curve in $\Lambda \cong \PP^3$ for a general hyperplane $\Lambda\subset \PP^4$. This implies that $R$ is non-singular in codimension $1$.

For an arbitrary hyperplane $\Lambda\subset \PP^4$, we see from the exact sequence
$$
\begin{tikzcd}
0 \ar{r} & \OO_R(-1) \ar{r} & \OO_R \ar{r} & \OO_{R\cap \Lambda} \ar{r} & 0
\end{tikzcd}
$$
that the Hilbert polynomial of $R\cap \Lambda$ is
$$
\chi(\OO_{R\cap \Lambda}(n)) = \chi(\OO_R(n)) - \chi(\OO_R(n-1))
$$
which is independent of $\Lambda$. So we have
\begin{equation}\label{TRIGCFSE017}
\chi(\OO_{R\cap \Lambda}(n)) = 3n + 1
\end{equation}
since $R\cap \Lambda$ is a rational normal curve in $\Lambda\cong\PP^3$ for $\Lambda$ general.

Suppose that $R$ is singular at a point $p$. Let $\Lambda$ be a hyperplane passing through $p$. If $R\cap \Lambda$ is an integral curve, then it is a singular rational curve in $\Lambda$ of degree $3$ and hence its Hilbert polynomial is different from \eqref{TRIGCFSE017}, which is a contradiction. So $R\cap \Lambda$ must contain a line. Then $R$ is a cubic cone with vertex at $p$. It is well known that there is a desingularization $f: \widehat{R} \to R$ such that $\widehat{R} = \FF_3$ and $f^* \OO_R(1) = \OO_{\widehat{R}}(B + 3F)$, where $B$ and $F$ are the effective generators of $\Pic(\FF_3)$ with $B^2 = -3$, $BF = 1$ and $F^2 = 0$. Then the Hilbert polynomial of $R$ is
\begin{equation}\label{TRIGCFSE019}
\chi(\OO_{R}(n)) = h^0(\OO_R(n)) = h^0(\OO_{\widehat{R}}(nB + 3nF)) = \dfrac{(3n+2)(n+1)}{2}
\end{equation}
for $n\gg0$.

Consider now the case that $R$ is smooth. Let $p$ be a general point on $R$. The linear system $|\OO_R(1)\otimes I_p^2|$ has dimension at least $1$. By the same argument as before, every member in $|\OO_R(1)\otimes I_p^2|$ contains a line $L$ passing through $p$. The hyperplanes $\Lambda$ passing through $L$ cut out a two-dimensional linear system with base locus $L$. Suppose that
$$
R\cap \Lambda = L + C
$$
with $C$ the moving part. Since $\dim |C| = 2$, $C^2 \ge 1$ on $R$. And since $LC\ge 1$ as $R\cap \Lambda$ is connected, $L^2 \le 0$. Now, since $L$ passes through a general point $p\in R$, $L^2 = 0$. So $|L|$ is a pencil of disjoint lines on $R$ since $H^1(\OO_R) = 0$ as $R$ is obviously rationally connected. That is, the map $R\to \PP^1$ given by $|L|$ realizes $R$ as a rational ruled surface. Then it is easy to see that $R\cong \FF_1$ and $R$ is a rational normal scroll. It is well known that $R$ is the image of the embedding $f: \FF_1\hookrightarrow \PP^4$ by the complete linear series $|B + 2F|$, where $B$ and $F$ are the effective generators of $\Pic(\FF_1)$ with $B^2 = -1$, $BF = 1$ and $F^2 = 0$. Then the Hilbert polynomial of $R$ is the same as \eqref{TRIGCFSE019}.
\end{proof}

So $J_p$ lies on a rational normal scroll or a cubic cone $R_p\subset \operatorname{Bs}(\CQ_p)$. In addition, for such $R_p$, since $h^0(I_{R_p}(2)) = 3$, we have $J_p\subset R_p = \operatorname{Bs}(\CQ_p)$ if $\dim \operatorname{Bs}(\CQ_p) = 2$.

Suppose that $R_p\cong \FF_1$ is a rational normal scroll and
$J_p\in |a B + bF|$, where $B$ and $F$ are effective generators of $\Pic(R_p)$ satisfying $B^2=-1$, $BF=1$ and $F^2 = 1$. Then
$$
b\ge a, \hspace{12pt}
2p_a(J_p) - 2 = 2b(a-1) - a(a+1)\hspace{12pt}\text{and}
\hspace{12pt} \deg J_p = a + b.
$$
When $\tau$ maps $B_p$ birationally onto $J_p$, $p_a(J_p) \ge 5$ and $\deg J_p \le 8$. It is easy to see that $a = 3$ and $b=5$. That is, $J_p\in |3B + 5F|$. In this case, $\deg J_p = 8$ and $p_a(J_p) = 5$. So $J_p = Y_p$ and $\tau$ is regular along $Y_p$.

When $\tau$ maps $B_p$ $2$-to-$1$ onto $J_p$, $J_p\in |B + 3F|$ and $Y_p$ contains the closed subscheme, which we denote by $2J_p$, of $R_p$ with multiplicity $2$ along $J_p$. Then we compare the Hilbert polynomials of $2J_p$ and $Y_p$:
$$
\chi(\OO_{2J_p}(n)) = 8n - 3 > 8n - 4 = \chi(\OO_{Y_p}(n))
$$
which is impossible.

Suppose that $R_p$ is a cubic cone. Let $\pi: \widehat{R}\cong \FF_3 \to R_p$ the minimal resolution of $R_p$ and $\widehat{J}\subset \widehat{R}$ be the proper transform of $J_p$.

Suppose that $\widehat{J}\in |aB + bF|$, where $B$ and $F$ are effective generators of $\Pic(\widehat{R})$ satisfying $B^2=-3$, $BF=1$ and $F^2 = 1$. Then
\begin{equation}\label{TRIGCFSTHMPCME011}
b\ge 3a \hspace{12pt} \text{and} \hspace{12pt}\deg J_p = \widehat{J}(B + 3F) = b.
\end{equation}
The arithmetic genus of $J_p$ can be computed \cite[p. 1044]{CHEN20171033} by computing its Hilbert polynomial
\begin{equation}\label{TRIGCFSTHMPCME012}
\begin{aligned}
&\quad\chi(\OO_{J_p}(n)) = h^0(\OO_{R_p}(n)) - h^0(\OO_{R_p}(n)\otimes I_{J_p}) \\
&= h^0(\OO_{\widehat{R}}(n)) - h^0(\OO_{\widehat{R}}(n) \otimes \OO_{\widehat{R}}(-\widehat{J}))\\
&= h^0(\OO_{\widehat{R}}(n B + 3n F)) - h^0(\OO_{\widehat{R}}((n - a) B + (3n - b) F))\\
&= \dfrac{(3n+2)(n+1)}{2} - \dfrac{1}2 \left(3n+2 - 2b + 3\left\lceil \dfrac{b}3 \right\rceil\right)\left(n+1 - \left\lceil \dfrac{b}3 \right\rceil\right)\\
&= bn + 1 - \left(b - \dfrac{3}2\left\lceil \dfrac{b}3 \right\rceil - 1 \right)\left(\left\lceil \dfrac{b}3 \right\rceil - 1\right)
\end{aligned}
\end{equation}
where $I_{J_p}$ is the ideal sheaf of $J_p$ in $R_p$. It follows that
\begin{equation}\label{TRIGCFSTHMPCME013}
p_a(J_p) = \left(b - \dfrac{3}2\left\lceil \dfrac{b}3 \right\rceil - 1 \right)\left(\left\lceil \dfrac{b}3 \right\rceil - 1\right).
\end{equation}

When $\tau$ maps $B_p$ birationally onto $J_p$, $p_a(J_p) \ge 5$ and $\deg J_p \le 8$. It is easy to see that $b=8$ from \eqref{TRIGCFSTHMPCME011} and \eqref{TRIGCFSTHMPCME013}. 
Hence $\deg J_p = 8$ and $p_a(J_p) = 5$. So $J_p = Y_p$ and $\tau$ is regular along $Y_p$.

When $\tau$ maps $B_p$ $2$-to-$1$ onto $J_p$, $\widehat{J}\in |B + 4F|$ and $Y_p$ contains the closed subscheme, which we denote by $2J_p$, of $R_p$ with multiplicity $2$ along $J_p$. The Hilbert polynomial of $2J_p$ can be computed in the same way as \eqref{TRIGCFSTHMPCME012}, from which we obtain
$$
\chi(\OO_{2J_p}(n)) = 8n - 4 = \chi(\OO_{Y_p}(n)).
$$
This implies that $Y_p = 2J_p$ has pure dimension $1$ and $\tau$ is regular along $Y_p$.

To summarize our above discussion, in the course of proving that $\tau$ is regular, we have also established the facts that 
\begin{itemize}
\item $Y_p$ has pure dimension $1$ for all $p\in C$ and hence $Y$ is Cohen-Macaulay;
\item $Y_p$ has type A1, A2 or A3 for each $p\in C$;
\item $Y_p$ is an integral curve if $Y_p$ has type A1 or A2;
\item $Y_p$ is an integral curve or a closed subscheme with multiplicity $2$ along a rational normal curve on a cubic cone if $Y_p$ has type A3.
\end{itemize}

Thus, in light of these above observations, in order to establish Theorem \ref{TRIGCFSTHMPCM}, it remains to prove that $Y_p$ is reduced if $Y_p$ has type A3. We will argue in two cases:
\begin{itemize}
\item The general fibers of $f$ are non-trigonal.
\item The general fibers of $f$ are trigonal.
\end{itemize}

\subsection{Reducedness of $Y_p$ when $f$ is not a trigonal fibration}

Suppose that the general fibers of $f: X\to C$ is not trigonal. Then the general fibers of $Y/C$ has type A1. We want to show that a special fiber $Y_p$ is not supported on a rational normal curve if $Y_p$ has type A3, i.e., if $Y_p$ lies on the cubic cone $\operatorname{Bs}(\CQ_p)$. Indeed, if $Y_p$ has type A3 and is not reduced, then it must be supported on a rational normal curve. That is, $\tau_* X_p = \tau_* B_p = 2J$ for a rational normal curve $J$ lying on the cubic cone $S = \operatorname{Bs}(\CQ_p)$.

Recall that the hyperplane $\Lambda \in |\OO_W(1)| \cong |\OO_X(8\Gamma)|$ satisfies that
$$
\tau^* \Lambda = 8 \Gamma.
$$
So $\Lambda$ meets $J$ at a unique point $s$, $\tau^{-1}(s) = q = B_p\cap \Gamma$, $\tau: B_p\to J$ is totally ramified over $s$ and $X$ is the normalization of $Y$ locally at $s$.

Let $o$ be the vertex of the cone $S$. If $s\ne o$, then $o\not\in \Lambda$ since $o\in J$ and $s = \Lambda\cap J$.
Then $\Lambda\cap S$ is a smooth curve. The following lemma shows that $\tau^* \Lambda \ne 8 \Gamma$ for such $\Lambda$.
Thus, as a consequence of Lemma \ref{TRIGCFSLEMDOUBLE}, we will deduce that the hyperplane $\Lambda \in |\OO_W(1)| \cong |\OO_X(8\Gamma)|$ meets $J$ at the vertex of the cone $S$.

\begin{Lemma}\label{TRIGCFSLEMDOUBLE}
Let $\tau: X\to R$ be a proper morphism between two analytic varieties with the commutative diagram
$$
\begin{tikzcd}
X \ar{r}{\tau} \ar{rd} & R \ar{d}\\
& \Delta
\end{tikzcd}
$$
such that $X$ and $R$ are smooth over the unit disk $\Delta = \{|t| < 1\}$ of dimensions $\dim X = 2$ and $\dim R = 3$, respectively, $\tau$ is a closed embedding over $\Delta^*$ and $\tau$ maps $X_0$ to a smooth curve $J\subset R_0$ with a map of degree $2$ totally ramified at a point $q\in X_0$. Then there does not exist a hypersurface $\Lambda$ in $R$, smooth over $\Delta$, such that $J\not\subset \Lambda$ and $\tau^* \Lambda$ has multiplicity
\begin{equation}\label{TRIGCFSLEMDOUBLEE003}
\operatorname{mult}_\Gamma \tau^* \Lambda \ge \dfrac{1}2 (\tau^* \Lambda . X_0)_q + 2
\end{equation}
along a section $\Gamma$ of $X/\Delta$ passing through $q$, where $(\tau^* \Lambda . X_0)_q$ is the intersection multiplicity between $\tau^* \Lambda$ and $X_0$ at $q$.
\end{Lemma}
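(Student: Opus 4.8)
The statement is local and analytic near $q$, so the plan is to replace $X$ and $R$ by small neighbourhoods of $q$ and $\tau(q)$ and to choose adapted coordinates. On $R$ I would take coordinates $(t,x,y)$ in which the projection to $\Delta$ is $t$, the central fibre is $R_0=\{t=0\}$, the curve is $J=\{t=y=0\}$ and $\tau(q)=0$; on $X$ I take $(t,s)$ with $X_0=\{t=0\}$, $\Gamma=\{s=0\}$ and $q=0$. Then $\tau(t,s)=(t,\xi(t,s),\eta(t,s))$, where $\eta(0,s)\equiv0$ because $\tau(X_0)\subset J$, so $\eta=t\,h(t,s)$; and $\xi(0,s)$ has a double zero at $s=0$, coming from the degree‑$2$ ramification of $\tau|_{X_0}$ at $q$, which after rescaling $s$ I normalise to $\xi(0,s)=s^{2}$.

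Write $F$ for a local equation of $\Lambda$ and set $\phi(x)=F(0,x,0)$ and $k=\ord_x\phi=(\Lambda_0\cdot J)_{\tau(q)}$, which is finite since $J\not\subset\Lambda$. Then $\tau^{\ast}\Lambda=\{G=0\}$ with $G=F\circ\tau$, and $G(0,s)=\phi(\xi(0,s))=\phi(s^{2})$, so that $(\tau^{\ast}\Lambda\cdot X_0)_q=\ord_s G(0,s)=2k$. Hence the forbidden inequality \eqref{TRIGCFSLEMDOUBLEE003} reads $\operatorname{mult}_\Gamma\tau^{\ast}\Lambda\ge k+2$, and it suffices to prove $m:=\operatorname{mult}_\Gamma\tau^{\ast}\Lambda\le k+1$. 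Since $m=\ord_s G(t,s)$, the order along the generic point of $\Gamma$, and since specialising $t\to0$ can only raise the $s$‑order, one has $m\le\ord_s G(0,s)=2k$ for free; this already disposes of the cases $k\le1$.

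So assume $k\ge2$. Then $\phi'(0)=0$, i.e. $\Lambda_0$ is tangent to $J$ at $\tau(q)$; because $\Lambda$ is smooth this forces $F_y(0)\ne0$, and near $\tau(q)$ I may present $\Lambda$ as a graph $y=\lambda_t(x)$ with $\lambda_t$ holomorphic in $(t,x)$ and $\lambda_0$ vanishing to order exactly $k$ at $x=0$. Dividing out the unit $F_y$ gives $m=\ord_s\bigl(\eta(t,s)-\lambda_t(\xi(t,s))\bigr)$. Writing $\xi=s^{2}+t\zeta$, the limiting tangent direction of the image $\tau(X_t)$ at the moving point $\tau(\Gamma\cap X_t)$ is $(\xi_s,\eta_s)(t,0)=t\bigl(\zeta_s(t,0),h_s(t,0)\bigr)$. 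If $h_s(0,0)\ne0$ this limit is \emph{not} tangent to $J$, whereas $\Lambda_0$ is, so $\Lambda_t$ and $\tau(X_t)$ meet transversally at the moving point and $m=1\le k+1$.

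The remaining case $h_s(0,0)=0$ is the hard one. Here immersivity of $\tau$ over $\Delta^{\ast}$ at the points $(t,0)$ forces $\zeta_s(0,0)\ne0$, so that for $t\ne0$ the fold $s\mapsto\xi(t,s)$ is \emph{unfolded} into a local isomorphism at $s=0$ whose derivative has order $t$; consequently $\lambda_0\circ\xi$ already acquires $s$‑order $k$ (rather than the naive $2k$). I would then read off $m$ from the Newton polygon, in the variables $(t,s)$, of $\eta(t,s)-\lambda_t(\xi(t,s))=t\,h(t,s)-\sum_j\ell_j(t)\,\xi(t,s)^{j}$, where $\lambda_t(x)=\sum_j\ell_j(t)x^{j}$, exploiting that $\ord_t\ell_j\ge1$ for $j<k$ while $\ell_k(0)\ne0$, to conclude $m\le k+1$. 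The crux — and the step I expect to be the main obstacle — is precisely this last bound: one must exclude a \emph{total cancellation}, up to $s$‑order $k+1$, between the vertical term $t\,h$ coming from the sheet‑separating part of $\tau$ and the sub‑leading terms of $\lambda_t\circ\xi$. Unlike the transverse case, this cannot be settled by a single tangency observation and requires a careful order‑by‑order (Newton‑polygon) bookkeeping, which is where the real work lies.
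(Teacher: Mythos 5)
Your normalizations are fine: with $\Gamma=\{s=0\}$, $\xi(0,s)=s^{2}$, $\eta=t\,h(t,s)$ and $k=(\Lambda_0\cdot J)_{\tau(q)}$, the lemma is indeed equivalent to $\operatorname{mult}_\Gamma\tau^{*}\Lambda\le k+1$, and your treatment of $k\le1$ and of the transverse case is correct. But the argument stops exactly where the lemma lives. First, the dichotomy you use to isolate the ``hard case'' rests on a false claim: immersivity of $\tau$ at $(t,0)$ for $t\ne0$ only requires that $\zeta_s(t,0)$ and $h_s(t,0)$ not vanish simultaneously for each $t\ne0$; it does not force $\zeta_s(0,0)\ne0$ when $h_s(0,0)=0$. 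In fact the paper shows that after coordinate changes the image surface $\tau(X)$ can always be put in the form $x^{2}=t^{2n}y$, i.e.\ $\xi(t,s)=s^{2}$ (so $\zeta\equiv0$) and $\eta=t^{n}s$; for $n\ge2$ both $\zeta_s(0,0)$ and $h_s(0,0)$ vanish, yet $\tau$ is an embedding off $t=0$. So your case analysis leaves the generic situation uncovered. Second, and decisively, you explicitly defer the key step --- excluding a cancellation of $t\,h(t,s)$ against the subleading terms of $\lambda_t(\xi(t,s))$ up to $s$-order $k+2$ --- to an unspecified Newton-polygon bookkeeping and identify it yourself as ``where the real work lies.'' Nothing you wrote rules out such a cancellation, and every nontrivial instance of the lemma falls into this case, so the proposal does not prove the statement.

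For what it is worth, the paper sidesteps the order-by-order bookkeeping by working with the image surface rather than the parametrization: writing $\Lambda=\{x=g(y,t)\}$ with $g(y,0)=by^{k}+O(y^{k+1})$, $b\ne0$, and taking the norm of $x-g$ down to the $(y,t)$-plane, the multiplicity hypothesis becomes the divisibility $(g(y,t))^{2}-t^{2n}y=(y+\varphi(t))^{k+2}\lambda(y,t)$, which is then refuted by a gcd argument with the $y$-derivative, using that $g(y,0)-2g_y(y,0)y=b(1-2k)y^{k}+O(y^{k+1})$ has $y$-order exactly $k$. That single derivative computation is precisely the device that excludes the ``total cancellation'' you were worried about; to salvage your parametric approach you would need an equivalent mechanism, not just a Newton polygon of the given series.
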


\begin{proof}
We argue by contradiction. First, observe that in an analytic open neighborhood of $p = \tau(q)$, $R\cong \Delta_{xyt}^3$. Set
$J = \{x=t=0\}$. Then, after a change of coordinates, $Z = \tau(X)$ is given by
$$
x^2 = t^a f(y,t)
$$
for some $a\in {\mathbb Z}^+$ and $f(y,t)\in \CC[[y,t]]$ with $f(y,0)\not\equiv 0$.

Since $X_0$ is smooth and $\tau: X_0\to J$ is totally ramified at $q$, we must have that $a$ is even and
$$
f(y,0) = cy + O(y^2)
$$
for some $c\ne 0$, where we use the notation $O(f_1,f_2,...,f_n)$ to denote an element of the ideal generated by $f_1,f_2,...,f_n$ in a ring. So after a change of coordinates, we may assume that $Z$ is given by
$$
x^2 = t^{2n} y.
$$

With these preliminaries at hand, suppose that such $\Lambda$ exists. Let
$$
m = (J . \Lambda)_p
$$
be the intersection multiplicity between $J$ and $\Lambda$ at $p$. Then
$$
(\tau^* \Lambda . X_0)_q = 2m.
$$
So \eqref{TRIGCFSLEMDOUBLEE003} becomes
\begin{equation}\label{TRIGCFSLEMDOUBLEE004}
\operatorname{mult}_\Gamma \tau^* \Lambda \ge m + 2.
\end{equation}
Clearly, $2m \ge m+2$ and hence $m\ge 2$.
And since $\Lambda_0$ is smooth, we see that $\Lambda$ is given by 
$$
x = g(y, t) 
$$
where $g\in \CC[[y,t]]$ satisfies
\begin{equation}\label{TRIGCFSLEMDOUBLEE000}
g(y,0) = by^m + O(y^{m+1})
\end{equation} 
for some $b\ne 0$. Then the condition \eqref{TRIGCFSLEMDOUBLEE004} translates to
\begin{equation}\label{TRIGCFSLEMDOUBLEE001}
(g(y, t))^2- t^{2n} y = (y + \varphi(t))^{m+2}\lambda(y, t)
\end{equation}
for some $\varphi(t)\in \CC[[t]]$ and $\lambda(y,t)\in \CC[[y, t]]$ satisfying $\varphi(0) = 0$.
Then
$$
\begin{aligned}
& \quad (y + \varphi(t))^{m+1} \mid \gcd\big((g(y, t))^2- t^{2n} y, 2 g_y(y,t) g(y,t) - t^{2n}\big)
\\
&\Rightarrow (y + \varphi(t))^{m+1} \mid
\gcd\big( g(y, t) (g(y,t) - 2 g_y(y,t) y), 2 g_y(y,t) g(y,t) - t^{2n} \big)\\
&\Rightarrow (y + \varphi(t))^{m+1} \mid
\gcd\big(g(y,t) - 2 g_y(y,t) y, 2 g_y(y,t) g(y,t) - t^{2n} \big)
\end{aligned}
$$
which is impossible since 
$$g(y, 0) - 2 g_y(y, 0) y = b (1-2m) y^m + O(y^{m+1}).$$
\end{proof}

To apply the above lemma, we see that
$$
\dfrac{1}2 (\tau^* \Lambda . X_0)_q = (\Lambda . J)_s = \deg J = 4
$$
and thus
$$
\operatorname{mult}_\Gamma \tau^* \Lambda < \dfrac{1}2 (\tau^* \Lambda . X_0)_q + 2 = 6
$$
by \eqref{TRIGCFSLEMDOUBLEE003}, which contradicts the fact that $\tau^*\Lambda = 8\Gamma$.
So we necessarily have $s = o$. That is, $\Lambda\cap J = o$, $\tau^{-1}(o) = q$ and $X$ is the normalization of $Y$ locally at $o$.

By \cite[Lemma 4.10, p. 1048]{CHEN20171033},
$Y$ is locally isomorphic to
the surface $\{y^2 = g(x,t)\}\subset\Delta_{xyt}^3$ for some $g(x,t)\in \BC[[x,t]]$ satisfying
\begin{equation}\label{E644}
g(0, t) = t^{\delta} \text{ and }
\left.\frac{\partial g}{\partial x}\right|_{x = 0} \equiv 0,
\end{equation}
at the vertex $o$ of the cone $S$ for some $\delta\in \ZZ^+$. So $Y$ is locally defined by
$$
y^2 = g(x,t) = t^m h(x,t)
$$
for $h(x,t)\in \BC[[x,t]]$ satisfying $h(x,0) \not\equiv 0$. Note that since the general fibers of $Y/C$ are smooth, $h(x,t)$ is reduced. That is,
$$
\gcd(h(x,t), h_x(x,t)) = 1.
$$
So the normalization of $Y$ at $o$ is given by
\begin{equation}\label{TRIGCFSE003}
z^2 = t^{m-2a} h(x,t)\hspace{24pt} \text{for } a = \left\lfloor \dfrac{m}{2}\right\rfloor
\end{equation}
in $\Delta_{xt}^2\times \BA_z^1$. Since $X$ is the normalization of $Y$ locally at $o$, $X$ is locally given by \eqref{TRIGCFSE003} over $o$. However, this is impossible since
\begin{itemize}
\item if $m$ is odd, then $X_p = \{z^2 = 0\}\subset \Delta_{x}^1 \times \BA_z^1$ is non-reduced;
\item if $m$ is even and $h(0,0) \ne 0$, then $\tau^{-1}(o)$ consists of two distinct points;
\item if $m$ is even and $h(0,0) = 0$, then $X_p = \{z^2 = h(x,0)\}\subset \Delta_{x}^1\times \BA_z^1$ is singular at $q = \tau^{-1}(o) = \{x = z = t = 0\}$ since
$h_x(0,0) = 0$ by \eqref{E644}.
\end{itemize}

\subsection{Reducedness of $Y_p$ when $f$ is a trigonal fibration}

Suppose that the general fibers of $f: X\to C$ are trigonal. Then each fiber $Y_p$ is of type either A2 or A3. Furthermore, there exists a closed subvariety
$$
M = \bigcup_{p\in C} \operatorname{Bs}(\CQ_p)\subset W,
$$ 
such that $Y \subset M$ and a general fiber $M_p$ of $M$ is a rational normal scroll in $W_p$. Note that $M$ is flat over $C$ by Lemma \ref{TRIGCFSLEMCUBIC}.

It is easy to see the following observation which is of some independent interest and also is important for our purposes here:

\begin{Lemma}\label{TRIGCFSLEMCUBICFIB}
Let $W$ be a $\PP^4$-bundle over a smooth curve $C$ and let $M\subset W$ be a closed subvariety of $W$, flat over $C$, such that the fiber $M_p$ is a rational normal scroll in $W_p$ for $p\in C$ general. Suppose that $M_p$ contains a linearly non-degenerate curve for every $p\in C$. Then
\begin{itemize}
\item $M_p$ is either a rational normal scroll or a cubic cone for every $p\in C$, and
\item there exists a desingularization $\xi: R\to M$ with commutative diagram
$$
\begin{tikzcd}
R \ar{r}{\xi} \ar{d}[left]{\phi} & M\ar{d}\\
S \ar{r}{\pi} & C
\end{tikzcd}
$$
such that $R$ and $S$ are $\PP^1$-bundles over $S$ and $C$, respectively, $R_p = \xi^{-1}(M_p)$ is either $\FF_1$ or $\FF_3$, and the map $\xi: R_p \xrightarrow{} M_p$ is either an isomorphism if $R_p\cong \FF_1$ or the contraction of the $(-3)$-curve if $R_p\cong \FF_3$ for each $p\in C$.

In particular, $R^\bullet \xi_* \OO_R = \OO_M$ and $M$ has rational singularities and is hence normal and Cohen-Macaulay.
\end{itemize}
\end{Lemma}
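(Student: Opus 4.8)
The plan is to first pin down the fiber geometry, then build the simultaneous resolution as a tower of two $\PP^1$-bundles coming from the rulings, and finally read off the cohomological conclusions from the rationality of the cone singularity.

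First I would establish the first bullet. Since $M$ is flat over $C$, every fiber $M_p$ has the same Hilbert polynomial as a general fiber, namely $(3n+2)(n+1)/2$ by Lemma \ref{TRIGCFSLEMCUBIC}; in particular $\deg M_p = 3$. The hypothesis that $M_p$ contains a linearly non-degenerate curve forces $M_p$ to be linearly non-degenerate in $W_p \cong \PP^4$. Because a non-degenerate surface in $\PP^4$ has degree at least $3$, the unique irreducible component of $(M_p)_{\mathrm{red}}$ containing that non-degenerate curve already has degree $3$; hence $(M_p)_{\mathrm{red}}$ is integral of degree $3$. Comparing its Hilbert polynomial (again $(3n+2)(n+1)/2$, by Lemma \ref{TRIGCFSLEMCUBIC}) with that of $M_p$ shows the defining nilpotent ideal has vanishing Hilbert polynomial, so it is zero and $M_p$ is integral. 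Lemma \ref{TRIGCFSLEMCUBIC} then identifies $M_p$ as a rational normal scroll or a cubic cone.

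Next I would construct $S$ and $R$ from the rulings. Both $\FF_1$ (embedded by $|B+2F|$) and the cubic cone (resolved by $\FF_3$) carry a pencil of lines, the fibers $F$, which meet $\OO(1)$ in degree $1$. I would take $S \to C$ to be the relevant component of the relative Hilbert scheme of lines on $M/C$, parametrizing these rulings, and $R \subset M \times_C S$ the universal line, with $\xi : R \to M$ and $\phi : R \to S$ the two projections. For $p$ general, $S_p$ is the base $\PP^1$ of the scroll's ruling; over a cone fiber the ruling lines all pass through the vertex and are parametrized by the directrix rational normal curve, again a $\PP^1$. The key verification, which I expect to be the main obstacle, is that $S \to C$ is a genuine $\PP^1$-bundle (proper and flat, with smooth total space and $\PP^1$ fibers) even across the finitely many special points where $M_p$ degenerates to a cone, so that no ruling line jumps or acquires an embedded/multiple structure; granting this, $\phi : R \to S$ is the universal line and hence a $\PP^1$-bundle as well. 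The structural reason this should hold is that $M$ is a relative surface of minimal degree, governed fiberwise by a $2\times 3$ Kronecker module of linear forms whose kernel recovers the ruling.

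I would then analyze $\xi$ fiberwise. Over a scroll fiber each point lies on a unique ruling, so $\xi$ is an isomorphism and $R_p \cong \FF_1$; over a cone fiber $\xi$ is an isomorphism away from the vertex and contracts the section of $\phi|_{R_p} : R_p \to S_p$ lying over the vertex, which, since the cone is taken over a degree-$3$ rational curve, is a $(-3)$-curve, so $R_p \cong \FF_3$ and $\xi|_{R_p}$ is the minimal resolution. Finally, to obtain $R^\bullet\xi_*\OO_R = \OO_M$: the singular locus of $M$ is the finite set of cone vertices, so $M$ is regular in codimension $1$; flatness over the smooth curve $C$ with Cohen--Macaulay fibers (scrolls are smooth, cones over projectively normal curves are Cohen--Macaulay) makes $M$ Cohen--Macaulay, hence $S_2$, and Serre's criterion then gives that $M$ is normal, so $\xi_*\OO_R = \OO_M$. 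Since each cone singularity is the cone over a rational normal curve, it is a rational (indeed cyclic quotient) surface singularity, so cohomology and base change together with the vanishing of $R^1\pi_*\OO_{\FF_3}$ for the resolution $\pi$ of the cone yield $R^i\xi_*\OO_R = 0$ for $i \ge 1$. Thus $R^\bullet\xi_*\OO_R = \OO_M$, whence $M$ has rational singularities and is therefore normal and Cohen--Macaulay, as claimed.
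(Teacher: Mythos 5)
Your proposal is correct and follows essentially the same route as the paper: flatness plus Lemma \ref{TRIGCFSLEMCUBIC} to pin down each fiber as an integral scroll or cubic cone, then the relative Fano variety of lines on $M/C$ with its universal family as the simultaneous resolution, and normality plus rationality of the cone vertex for the statement $R^\bullet\xi_*\OO_R=\OO_M$. You are somewhat more explicit than the paper at the two places it waves its hands (that $S\to C$ remains a $\PP^1$-bundle across the cone fibers, and the vanishing of $R^1\xi_*\OO_R$), but the underlying argument is the same.
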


\begin{proof}
Clearly, $\deg M_p = 3$ for all $p\in C$. So $M_p$ is supported on either an integral cubic surface, a union of a quadric and plane or a union of three planes. And since $M_p$ contains a linearly non-degenerate curve, it must be supported on an integral linearly non-degenerate cubic surface. Thus, by Lemma \ref{TRIGCFSLEMCUBIC}, $S = \supp(M_p)$ is either a rational normal scroll or a cubic cone. By the flatness of $M$ over $C$, the Hilbert polynomial of $M_p$ agrees with that of a rational normal scroll and thus that of $S$. That is,
$$
h^0(\OO_{M_p}(n)) = h^0(\OO_S(n))
$$
for $n$ sufficiently large. This implies that the kernel of $\OO_{M_p} \to \OO_S$ is zero. That is, $S = M_p$ and $M_p$ is integral without any embedded component.

Let us consider over $M$ the relative Fano variety of lines
$$
S' = \big\{(p, [L]): p\in C, [L]\in {\mathbb{G}\text{r}}(1, W_p), L \subset M_p\big\}
$$
where ${\mathbb{G}\text{r}}(1, W_p)$ is the Grassmannian of lines in $W_p\cong \PP^4$. When $M_p$ is a rational normal scroll, $S_p'$ has two disjoint components: one is a smooth rational curve and the other is a point. So $S'$ has two irreducible components with relative dimension $1$ and $0$ over $C$, respectively. We let $S$ be the component of $S'$ of dimension $2$. Clearly, $S$ is a $\PP^1$-bundle over $C$.

Let
$$
R = \{(p, [L], q): (p, [L])\in S, q\in L\}\subset S\times_C W
$$
be the universal family over $S$. Clearly, the projection $\xi: R\to W$ maps $R$ to $M$, which resolves the singularities of $M$ with the properties stated in the lemma.
\end{proof}

Let $\xi: R\to W$ be the desingularization of $M$ given in Lemma \ref{TRIGCFSLEMCUBICFIB}. Then we have the diagram
$$
\begin{tikzcd}
& R\ar{d}{\xi} \ar{r}{\phi} & S \ar{d}{\pi}\\
X \ar[dashed]{ur}{\psi}\ar{r}{\tau} & W \ar{r} & C
\end{tikzcd} 
$$
where we let $Z = \psi(X)$ be the proper transform of $X$ under $\psi$ and $Q = \xi^* \Lambda$ with $\Lambda \in |\OO_W(1)| \cong |\OO_X(8\Gamma)|$ being the unique section. As before, we set $Y = \tau(X)$.

Obviously, $Y_p$ is reduced in the A2 case. We thus need to prove that $Y_p$ is reduced when $M_p$ is a cubic cone, i.e., 
$R_p\cong \FF_3$. If this is not the case, then $Y_p$ is a closed subscheme in $M_p$ and supported on a smooth rational curve with multiplicity $2$. It is easy to see that, in this case,
$$
Z_p = B + 2J
$$
on $R_p \cong \FF_3$, where $J\in |B + 4F|$ is a smooth rational curve whose image under $\xi: R\to W$ is a rational normal curve.

To prove that this cannot happen, and thus that $Y_p$ is reduced in the A3 case, it suffices to prove the proposition below:

\begin{Proposition}\label{TRIGCFSPROP003}
Let $\psi: X/\Delta\dashrightarrow R/\Delta$ be a rational map over the unit disk $\Delta$, where
\begin{itemize}
\item $X$ is a flat projective family of curves over $\Delta$;
\item $X$ is smooth and $g(X_t) = 5$ for $t\ne 0$;
\item $R$ is a smooth projective family of surfaces over $\Delta$ with $R_0\cong \FF_3$ and $R_t\cong \FF_1$ for $t\ne 0$;
\item $\psi$ is a regular closed embedding over $\Delta^*$ and $\psi_* X_t\in |3B + 5F|$
for $t\ne 0$, where $B$ and $F$ are effective generators of $\Pic(\FF_1)$ and satisfying the conditions that $B^2 = -1$, $BF = 1$ and $F^2 = 0$.
\end{itemize}
Suppose that there exist a section $\Gamma\subset X$ of $X/\Delta$ and an effective divisor $Q\subset R$ such that
\begin{itemize}
\item $Q$ is flat over $\Delta$, $Q_t\in |B + 2F|$ for $t\ne 0$, and
\item $\psi^* Q = 8\Gamma$ over $\Delta^*$.
\end{itemize}
Then there does not exist an irreducible component $M$ of $X_0$ such that 
$M\cap \Gamma \ne \emptyset$ and $\psi$ maps $M$ $2$-to-$1$ onto an integral curve $J\in |B + 4F|$, where $B$ and $F$ are effective generators of $\Pic(R_0) = \Pic(\FF_3)$ and satisfying the conditions that $B^2 = -3$, $BF=1$ and $F^2 = 0$.
\end{Proposition}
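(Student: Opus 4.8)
The plan is to reduce the statement to the local analytic non-existence result of Lemma~\ref{TRIGCFSLEMDOUBLE}, applied with $\tau=\psi$ and hypersurface $\Lambda=Q$, in analytic neighbourhoods of the point $q=\Gamma\cap M$ and its image $p=\psi(q)\in J$. The whole point of having passed to the resolution $R$ of Lemma~\ref{TRIGCFSLEMCUBICFIB} is precisely that its total space is smooth over $\Delta$, so that once we localize away from the $(-3)$-curve the target is a smooth threefold and the hypotheses of Lemma~\ref{TRIGCFSLEMDOUBLE} become available.

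First I would extract the numerics from flatness. Since $Q$ is flat over $\Delta$ with $Q_t\in|B+2F|$ on $R_t\cong\FF_1$, matching classes across the degeneration (the relative $(-1)$-section restricts to $B$ on $\FF_1$ and to $B+F$ on $\FF_3$) gives $Q_0\in|B+3F|$ on $R_0\cong\FF_3$, whence $Q_0\cdot B=0$ and $Q_0\cdot J=(B+3F)(B+4F)=4$. Over $\Delta^*$ the relation $\psi^*Q=8\Gamma$ forces the length-$8$ scheme $Q_t\cap Z_t$ to be concentrated at the single point $s_t=\psi(\Gamma_t)$; letting $t\to0$, its flat limit is a length-$8$ subscheme supported at $p=s_0$ and contained in $Q_0\cap Z_0=Q_0\cap(B+2J)$. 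Because $Q_0\cdot B=0$, provided $Q_0$ does not contain $B$ we get $Q_0\cap B=\emptyset$, so $p\in J\smallsetminus B$, locally $Z_0=2J$, and all the contact lies on $J$, giving $(Q_0\cdot J)_p=4$ and hence $(\psi^*Q\cdot X_0)_q=8$. The same single-point concentration shows $\psi|_M\colon M\to J$ is totally ramified at $q$: an unramified branch at $q$ would produce a second point of $X_0$ over $p$ carrying part of the divisor $\psi^*Q$, contradicting $\psi^*Q=8\Gamma$.

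With these facts in hand the endgame is immediate. Near $p\notin B$ the map $\xi$ is a local isomorphism, so $R$ is a smooth threefold there, $Q$ is a smooth hypersurface over $\Delta$ with $J\not\subset Q$, the point $q$ is a smooth point of $X_0$ with $X_0=M$ locally, and $\psi$ is a genuine morphism carrying $X_0$ two-to-one onto the smooth curve $J$, totally ramified at $q$. Thus Lemma~\ref{TRIGCFSLEMDOUBLE} applies with $\Lambda=Q$, and since
$$\operatorname{mult}_\Gamma\psi^*Q=8\ \ge\ \tfrac12(\psi^*Q\cdot X_0)_q+2=\tfrac12\cdot 8+2=6,$$
the hypersurface $Q$ is forbidden by that lemma, a contradiction. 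Hence no component $M$ as in the statement can exist.

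The step I expect to be the main obstacle is controlling the position of the contact point $p$ relative to the $(-3)$-curve $B$, equivalently relative to the vertex of the cubic cone $\xi(R_0)$. If $Q_0\supset B$ (the hyperplane passing through the vertex), then $p$ may specialize onto $B$, $\xi$ is no longer a local isomorphism, and $\psi$ may acquire an indeterminacy at $q$, so Lemma~\ref{TRIGCFSLEMDOUBLE} cannot be quoted verbatim. I would dispose of this case either by ruling out $Q_0\supset B$ directly from $\psi^*Q=8\Gamma$, or by running the local computation of Lemma~\ref{TRIGCFSLEMDOUBLE} on the smooth resolution $R$ itself, where $B$ and $J$ are transverse smooth curves, so as to obtain the analogous divisibility obstruction for $(g(y,t))^2-t^{2n}y$.
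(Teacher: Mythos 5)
There is a genuine gap, and it sits exactly where you flag it. Your argument for the case where the contact point lies off the $(-3)$-curve $B$ is essentially the paper's own first step (Claim \ref{TRIGCFSPROP003CLM000}): if $\psi$ is regular at $q$ then one shows $B\not\subset Q_0$, the target is locally a smooth threefold, and Lemma \ref{TRIGCFSLEMDOUBLE} with $8\ge \tfrac12\cdot 8+2$ gives the contradiction. But the conclusion the paper draws from this is not that the proposition is proved; it is that $\psi$ \emph{cannot} be regular at $q$. The indeterminacy of $\psi$ at $q$ is then resolved by a chain of exceptional curves mapping onto $B$, the contact point $s=\psi\circ\pi(\widehat q)$ is forced to lie on $B\cap J$, and since $Q_0\cdot B=(B+3F)\cdot B=0$ while $s\in Q_0\cap B$, one necessarily has $B\subset Q_0$ (indeed $Q_0=B+3G$). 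So your first proposed remedy --- ruling out $Q_0\supset B$ directly from $\psi^*Q=8\Gamma$ --- is provably impossible: the containment $B\subset Q_0$ is exactly what happens in the remaining case.

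Your second proposed remedy ("run the local computation of Lemma \ref{TRIGCFSLEMDOUBLE} on $R$ itself") substantially underestimates what is needed. In the surviving case $\psi$ is not a morphism at $q$, so there is no totally ramified double cover of a smooth curve to which the $x^2=t^{2n}y$ normal form applies. One must instead resolve $\psi$, contract the exceptional chain to an $A_{m-1}$ singularity, and determine the local analytic equation of the image surface $Z$ at $s$, which turns out to be $xy^2=t^{2m}$ or $xy^2=t^{2m}+t^{m+c}y$ (Lemma \ref{TRIGCFSLEM000}); one then needs the separate power-series arguments of Lemmas \ref{TRIGCFSLEM001} and \ref{TRIGCFSLEM002} to show that $Z.Q$ cannot have the form $dB+8\Gamma_Z$ when $Q_0=B+3G$ with $G$ and $J$ transverse at $s$. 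These computations are the technical core of the proposition and are not a variant of the $\gcd$ divisibility trick in Lemma \ref{TRIGCFSLEMDOUBLE}; they are absent from your proposal, so the main case remains unproved.
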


In addition to Lemma \ref{TRIGCFSLEMDOUBLE}, we need a few more ``local'' lemmas, in order to prove Proposition \ref{TRIGCFSPROP003}. They are formulated as Lemmas \ref{TRIGCFSLEM000}, \ref{TRIGCFSLEM001} and \ref{TRIGCFSLEM002} below.

\begin{Lemma}\label{TRIGCFSLEM000}
Let $\psi: X/\Delta\to R/\Delta$ be a proper morphism between analytic varieties over the unit disk $\Delta = \{|t| < 1\}$, where $X\cong \{uv = t^m\}\subset \Delta_{uvt}^3$ for some $m\in \ZZ^+$ and $R \cong \Delta_{xyt}^3$. Suppose that $\psi$ is a closed embedding over $\Delta^*$. If the restriction $\psi: X_0\to R_0$ of $f$ to $t=0$ is given by $\psi(u,v,0) = (u^a, v, 0)$ for some $a\in \ZZ^+$, then 
there exists an automorphism $g\in \Aut(R/\Delta)$ such that
$g\circ \psi(X)$ is cut out by
\begin{equation}\label{TRIGCFSLEM000E000}
x\left(y^a + \lambda(x,y,t) \right) = t^{am} + \delta(y,t)
\end{equation}
on $R$ for some $\lambda(x,y,t)\in \CC[y, [x,t]]$
and $\delta(y,t)\in \CC[y, [t]]$
satisfying
$$
\begin{aligned}
\deg_y \lambda(x,y,t) \le a-2, &\hspace{6pt} \deg_y \delta(y,t) \le a-1,\hspace{6pt} \lambda(x,0,t)\equiv \delta(0,t) \equiv 0\\
t^{am} \mid \lambda(x,t^m y, t)&\hspace{6pt} \text{and}\hspace{6pt} t^{am + 1} \mid \delta(t^my, t).
\end{aligned}
$$
In particular, when $a = 2$, there exists an automorphism $g\in \Aut(R/\Delta)$ such that
$g\circ \psi(X)$ is cut out by
\begin{equation}\label{TRIGCFSLEM000E001}
x y^2  = t^{2m} \hspace{6pt}\text{or}\hspace{6pt} xy^2 = t^{2m} + t^{m+c} y \hspace{12pt} \text{for some } 0 < c < m
\end{equation}
on $R$ and there exists an automorphism $h\in \Aut(X/\Delta)$ such that
\begin{equation}\label{TRIGCFSLEM000E002}
g\circ \psi\circ h(u,v,t) = (u^2, v, t) \hspace{6pt}\text{or}\hspace{6pt}
g\circ \psi\circ h(u,v,t) = (u^2 + t^c u, v, t).
\end{equation}
\end{Lemma}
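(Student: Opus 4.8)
The plan is to determine the defining equation of the image $Z = \psi(X) \subset R$ and then to simplify it by automorphisms of $R/\Delta$. Since $\psi$ is a closed embedding over $\Delta^*$ and $X = \{uv = t^m\}$ is irreducible and dominates $\Delta$, the image $Z$ is an irreducible hypersurface in $R \cong \Delta^3_{xyt}$, say $Z = \{G = 0\}$ with $G \in \CC[[x,y,t]]$ generating the kernel of $\CC[[x,y,t]] \to \OO_X = \CC[[u,v,t]]/(uv - t^m)$, $x \mapsto \phi$, $y \mapsto \eta$, $t \mapsto t$. As $Z$ dominates $\Delta$ we have $t \nmid G$, so $Z$ is flat over $\Delta$ and $Z_0 = \{G(x,y,0) = 0\}$. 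First I would compute $Z_0$. The central fibre $X_0 = \{u=0\} \cup \{v=0\}$ maps, via $\psi(u,v,0) = (u^a, v, 0)$, with the branch $\{u=0\}$ going isomorphically onto the $y$-axis $\{x=0\}$ and the branch $\{v=0\}$ going $a$-to-$1$ onto the $x$-axis $\{y=0\}$. Since $Z_0$ is the flat limit of $Z_t \cong X_t$, conservation of length in the flat family forces the $y$-axis to appear in $Z_0$ with multiplicity $1$ and the $x$-axis with multiplicity $a$; hence $G(x,y,0) = x y^a$ up to a unit, which we normalize to $x y^a$.

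The key algebraic input is the identity $u^a v^a = (uv)^a = t^{am}$ on $X$. Since $x$ pulls back to $u^a(1 + \cdots)$ and $y$ to $v(1 + \cdots)$, this forces a relation of the shape $x y^a = t^{am} + (\text{corrections})$, which is exactly the skeleton of \eqref{TRIGCFSLEM000E000}. To make this precise I would view $G$ as a series in $y$ with coefficients in $\CC[[x,t]]$ whose leading term is $x y^a$, and apply a Weierstrass-type division together with automorphisms of $R/\Delta$ of the form $(x,y,t) \mapsto (x \cdot (\text{unit}), y + (\text{higher order}), t)$. These serve to normalize the leading coefficient to $x$, to lower the $y$-degree of all remaining terms to at most $a-1$, and to separate those terms into the part divisible by $x$, which produces $x\lambda(x,y,t)$ with $\deg_y \lambda \le a-2$ and $\lambda(x,0,t) = 0$, and the part independent of $x$, which produces the remainder $-t^{am} - \delta(y,t)$ with $\deg_y \delta \le a-1$ and $\delta(0,t) = 0$. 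Identifying the pure power-of-$t$ term as $-t^{am}$ uses the computation of $Z_0$ together with $u^a v^a = t^{am}$.

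To obtain the divisibility conditions $t^{am} \mid \lambda(x, t^m y, t)$ and $t^{am+1} \mid \delta(t^m y, t)$, I would exploit the second chart of $X$, on which $u = t^m/v$: requiring $G = 0$ to hold identically after substituting the parametrization $x = \phi(t^m/v, v, t)$, $y = \eta(t^m/v, v, t)$ (equivalently, that $G$ lie in the kernel above) forces each coefficient of $G$ to be divisible by the appropriate power of $t$. The substitution $y \mapsto t^m y$ is precisely the shadow of the relation $v \sim t^m/u$ relating the two charts, and tracking orders of vanishing yields the stated divisibilities.

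For the final assertion I specialize to $a = 2$. Then $\deg_y \lambda \le 0$ together with $\lambda(x,0,t)=0$ forces $\lambda \equiv 0$, while $\delta = \delta_1(t) y$ with $t^{m+1} \mid \delta_1$, so the image is $x y^2 = t^{2m} + \delta_1(t) y$; a rescaling of $X/\Delta$ and $R/\Delta$ and further coordinate changes reduce $\delta_1$ either to $0$ or to a single monomial $t^{m+c}$. The constraint $0 < c < m$ comes from the fact that $c = 0$ would make $u \mapsto u^2 + u = u(u+1)$ a local isomorphism, contradicting that $\psi$ is $2$-to-$1$ (i.e. $a=2$), whereas the case $c \ge m$ can, after a further automorphism $h \in \Aut(X/\Delta)$, be brought into the form $(u^2, v, t)$. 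Reading off the map then gives \eqref{TRIGCFSLEM000E002}: one checks directly that $\psi \circ h(u,v,t) = (u^2 + t^c u, v, t)$ produces $x y^2 = t^{2m} + t^{m+c} y$, since $u(u+t^c)v^2 = t^m v(u + t^c) = t^{2m} + t^{m+c} v$. The main obstacle is the simultaneous normalization carried out in the second and third steps: arranging, by a single automorphism of $R/\Delta$, the $y$-degree bounds on $\lambda$ and $\delta$ and all of the $t$-divisibility conditions at once, without disturbing the already-normalized leading term $x y^a$. This delicate bookkeeping is what the statement packages into \eqref{TRIGCFSLEM000E000}.
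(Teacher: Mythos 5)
Your outline of the easy parts matches the paper: the identification $Z_0 = \{xy^a = 0\}$, the Weierstrass preparation putting the equation into the shape $x(y^a + \lambda(x,y,t)) = \mu(y,t)$ with $\deg_y \lambda \le a-2$ and $\deg_y \mu \le a-1$, and the final reduction for $a=2$ are all carried out essentially as you describe. The gap is in the step you compress into ``substitute the parametrization $u = t^m/v$ and track orders of vanishing.'' The conditions $\mu(y,t) = t^{am} + \delta(y,t)$ with $\delta(0,t)\equiv 0$, $t^{am}\mid \lambda(x,t^my,t)$ and $t^{am+1}\mid \delta(t^my,t)$ are \emph{not} formal consequences of $G$ vanishing on $\psi(X)$: they depend essentially on $\psi$ being a closed embedding over $\Delta^*$, as the paper warns immediately after the lemma. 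A concrete test: take $a=2$, $m\ge 2$ and $\psi(u,v,t) = (u^2, v+tu, t)$. This satisfies every hypothesis you actually use (proper, finite onto its image, restricts to $(u^2,v,0)$ on the central fiber), and eliminating $u,v$ gives $x(y^2 - t^2x - 2t^{m+1}) = t^{2m}$, whose term $\lambda(x,0,t) = -t^2x - 2t^{m+1}$ is not divisible by $t^{2m}$ and cannot be absorbed into $t^{2m}$ by any automorphism of $R/\Delta$; the conclusion fails because this $\psi$ is $2$-to-$1$ over $\Delta^*$ onto its image. Any correct derivation of the divisibility conditions must therefore invoke the closed-embedding hypothesis at exactly this point, and your proposal never does.

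What the paper does instead is the technical core of the proof: it performs a weighted blowup $\pi:\widehat{R}\to R$ along $\{y = t^d = 0\}$, where $d$ is chosen so that $ad = \min\bigl(\nu(\lambda(x,t^dy,t)), \nu(\mu(t^dy,t))\bigr)$, writes down the induced curve $x(w^a + f(x,w)) = h(w)$ on the exceptional divisor, and then uses that $X$ is normal and is the stable reduction of $Z$ to force, via a connectedness/intersection-number contradiction on the exceptional divisor (the inequality $G\Gamma = a\ge 2$ against transversality), first that $h(w)\not\equiv 0$, then that $h(w)$ is a nonzero constant (total ramification of $\widehat{C}_2\to C_2$ at the point where it meets the proper transform of $C_1$), and finally that $d = m$ by matching the local model $\{yz = t^d\}$ of $\widehat{Z}$ with $X\cong\{uv=t^m\}$. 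These three facts are precisely the statements $\mu(0,t) = h(0)t^{am}$, $\delta(0,t)\equiv 0$ with $t^{am+1}\mid\delta(t^my,t)$, and $t^{am}\mid\lambda(x,t^my,t)$. None of this is supplied by your local substitution argument, so the proposal as written does not prove the lemma.
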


\begin{proof}
There is nothing to prove for $a=1$. Let us assume that $a\ge 2$.

Let $Z = \psi(X)$. Clearly, $Z_0 = \{xy^a = 0\} \subset R_0\cong \Delta_{xy}^2$. Using the Weierstrass Preparation Theorem, we can put the defining equation of $Z$ in the following {\em standard} form:
\begin{equation}\label{TRIGCFSLEM000E003}
\begin{aligned}
& (x+\alpha(y,t)) \left(y^a + \beta(x,y,t) \right) \theta(x,y,t) = \gamma(y,t)
\\
&\hspace{12pt} \text{for some } \alpha(y,t)\in \CC[[y,t]],\hspace{6pt} \beta(x,y,t)\in \CC[y,[x,t]],\\
&\hspace{54pt}
\gamma(y,t)\in \CC[y,[t]] \hspace{6pt}\text{and}\hspace{6pt}\theta(x,y,t)\in \CC[[x,y,t]]\\
&\hspace{18pt}\text{satisfying that } \alpha(y,0)\equiv \beta(x,y,0) \equiv \gamma(y,0)\equiv 0,\ \theta(0,0,0) \ne 0,\\
&\hspace{90pt} \deg_y \beta(y,t) \le a-1 \hspace{6pt}\text{and}\hspace{6pt} \deg_y \gamma(y,t) \le a-1.
\end{aligned}
\end{equation}

We can find an automorphism $g\in \Aut(R/\Delta)$, or equivalently, a coordinate change, such that $g(Z)$ is given by
\begin{equation}\label{TRIGCFSLEM000E004}
x(y^a + \lambda(x,y,t)) = \mu(y,t) 
\end{equation}
for some $\lambda(x,y,t)\in \CC[y, [x,t]]$ and $\mu(y,t)\in \CC[y, [t]]$ satisfying the condition that
$$\lambda(x,y,0) \equiv \mu(y,0)\equiv 0,\ \deg_y \lambda(x,y,t)\le a -2 \text{ and } \deg_y \mu(y,t) \le a-1.$$
For simplicity, we replace $\psi$ by $g\circ \psi$ and assume that $Z$ is given by \eqref{TRIGCFSLEM000E004} in $R$.

For $f(x,y,t)\in \CC[[x,y,t]]$, we define $m=\nu(f)$ to be largest integer such that $t^m \mid f(x,y,t)$. We extend this definition to
$f\in \CC[[x,y,\sqrt[n]{t}]]$ for which $\nu(f)\in \QQ_{\ge 0}$.

We let $d$ be the largest rational number such that
$$
ad = \min(\nu(\lambda(x,t^d y, t)), \nu(\mu(t^d y, t)))
$$
After a base change $\Delta\to \Delta$, we may assume that $d\in \ZZ^+$. Let $\pi: \widehat{R}\to R$ be the blowup of $R$ along the closed subscheme $y = t^d = 0$. We have the diagram
$$
\begin{tikzcd}
& X\ar[dashed]{d}{\phi} \ar[dashed]{ld} \ar{rd}{\psi}\\
\widehat{Z} \ar[hook]{r}& \widehat{R} \ar{r}{\pi} & R
\end{tikzcd}
$$
where $\widehat{Z}$ is the proper transform of $Z$ under $\pi$.

Let $C_1 = \{x = t = 0\}$ and $C_2 = \{y = t= 0\}$. 
The central fiber $\widehat{R}_0$ of $\widehat{R}$ is a union $\widehat{R}_0 = S\cup E$, where $S\cong R_0$ is the proper transform of $R_0$ and $E\cong \PP^1\times C_2$ is the exceptional divisor of $\pi$.
We write $\widehat{Z}_0 = \widehat{C}_1 + \widehat{C}_2$, where $\widehat{C}_1\subset S$ is the proper transform of $C_1$ and $\widehat{C}_2\subset E$.

We can write down the defining equation of $\widehat{C}_2$ in $E\cong \PP^1\times C_2$ explicitly:
\begin{equation}\label{TRIGCFSLEM000E005}
\begin{aligned}
x(w^a + f(x,w)) &= h(w)\hspace{12pt}\text{for}\\
f(x,w) &= \left.\dfrac{\lambda(x,t^d w, t)}{t^{ad}}\right|_{t=0}
\hspace{12pt}\text{and}\\
h(w) &= \left.\dfrac{\mu(t^d w, t)}{t^{ad}}\right|_{t=0}
\end{aligned}
\end{equation}
where $w$ is the affine coordinate of $E\backslash S \cong \BA^1\times C_2$ with $y = t^d w$. Note that $f(x,w)\in \CC[w,[x]]$, $h(w)\in \CC[w]$, $\deg_w f(x,w) \le a-2$, $\deg_w h(w) \le a-1$ and at least of one of $f(x,w)$ and $h(w)$ is nonzero by our choice of $d$.

Since $X$ is the stable reduction of $Z$, there do not exist two distinct irreducible components of $\widehat{C}_2$ dominating $C_2$ via $\pi$. Combining this with the fact that $\deg_w f(x,w)\le a -2$, we see that
\begin{itemize}
\item $w^a + f(x,w)$ is irreducible in the ring $\CC[w,[x]]$ if $h(w)\equiv 0$;
\item $x(w^a + f(x,w)) - h(w)$ is irreducible in the ring $\CC[w,[x]]$ if $h(w)\not\equiv 0$.
\end{itemize}
Geometrically, there exits an irreducible component $\Gamma\subset \widehat{C}_2$, defined by either $w^a + f(x,w) = 0$ or $x(w^a + f(x,w)) = h(w)$ such that $\pi_* \Gamma = a C_2$.

We claim that $h(w) \not\equiv 0$. Otherwise,
$$
\widehat{Z}_0 = \widehat{C}_1 + \widehat{C}_2 = \widehat{Z}_0 = \widehat{C}_1 + G + \Gamma
$$
where $G = \{x=0\}\subset E$, $G\cong \PP^1$, $\Gamma = \{w^a + f(x,w) = 0\} \subset E$ and $\widehat{C}_1\cap \Gamma = \emptyset$. 

Since $\psi$ is a closed embedding over $\Delta^*$, $Z$ is smooth over $\Delta^*$. The same holds for $\widehat{Z}$ and since $\widehat{Z}_0$ is reduced, $\widehat{Z}$ is smooth in codimension one. Also since $\widehat{Z}$ is a Cartier divisor in $\widehat{R}$, it is a local complete intersection and we deduce that $\widehat{Z}$ is Cohen-Macaulay. So by Serre's criterion, $\widehat{Z}$ is smooth.

After a base change, let $\xi: \widehat{X}\to \widehat{Z}$ be the stable reduction of $\widehat{Z}$. We have the diagram
$$
\begin{tikzcd}
\widehat{X} \ar{r} \ar{d}{\xi} & X\ar[dashed]{d}{\phi} \ar[dashed]{ld} \ar{rd}{\psi}\\
\widehat{Z} \ar[hook]{r}& \widehat{R} \ar{r}{\pi} & R
\end{tikzcd}
$$
Clearly, $\widehat{X}_0 = D_1 + D_2 + D_3$ is the union of three smooth curves $D_i$ such that $D_1D_2=1$, $D_2D_3 = 1$, $D_3D_1 = 0$, $\xi_* D_1 = \widehat{C}_1$, $\xi_* D_2 = G$ and $\xi_* D_3 = \Gamma$. So $\xi: \widehat{X}\to \widehat{Z}$ is birational and finite. And since both $\widehat{X}$ and $\widehat{Z}$ are normal, $\xi$ must be an isomorphism and hence $G$ and $\Gamma$ meet transversely at a unique point. But $G\Gamma = a\ge 2$ on $E$, which is a contradiction. So $h(w)\not\equiv 0$.

Consequently, $\widehat{C}_2$ is an integral curve defined by \eqref{TRIGCFSLEM000E005} and mapped to $C_2$ $m$-to-$1$ by $\pi$. So $\pi: \widehat{Z}\to Z$ is birational and finite.
Thus, $\phi: X\dashrightarrow \widehat{Z}$ is a regular morphism and actually an isomorphism since $\widehat{Z}$ is normal by the same argument as above. From this, we conclude the following:
\begin{itemize}
\item The map $\pi: \widehat{C}_2\to C_2$ is totally ramified at the point $q = \widehat{C}_1\cap \widehat{C}_2$. Using the defining equation \eqref{TRIGCFSLEM000E005} of $\widehat{C}_2$, we see that $h(w) \equiv h(0) \ne 0$ and hence
$$
\mu(y,t) = h(0) t^{ad} + \delta(y, t)
$$
for $\delta(y,t)\in \CC[y,[t]]$ satisfying that $\delta(0,t)\equiv 0$ and $t^{ad+1}\mid \delta(t^d y, t)$.
\item Since $\phi: X\to \widehat{Z}$ is an isomorphism and it is easy to see that
$$\widehat{Z}\cong \{yz = t^d\}\subset \Delta_{yzt}^3$$
at $q$, we conclude that $d = m$.
\end{itemize}
Combining the above, we see that $Z$ is given by
$$
x\left(y^a + \lambda(x,y,t) \right) = t^{am} + \delta(y,t)
$$
after a coordinate change. By our choice of $d$, $t^{ad} = t^{am}\mid \lambda(x, 0, t)$. So the term $x\lambda(x,0,t)$ can be ``absorbed'' into $t^{am}$. That is, after a further coordinate change, we obtain \eqref{TRIGCFSLEM000E000}.

When $a=2$, $\lambda(x,y,t) \equiv 0$ since
$$\deg_y \lambda(x,y,t) \le a-2 = 0\hspace{24pt} \text{and} \hspace{24pt} \lambda(x,0,t)\equiv 0.$$
So we obtain \eqref{TRIGCFSLEM000E001}. Then we normalize $Z$ and \eqref{TRIGCFSLEM000E002} follows easily.
\end{proof}

In the above lemma, the hypothesis that $\psi: X\to R$ is a closed embedding over $\Delta^*$ is crucial. Otherwise, the statement is false.

\begin{Lemma}\label{TRIGCFSLEM001}
Let $Z$ and $Q$ be analytic subvarieties of $R \cong \Delta_{xyt}^3$ given by
\begin{equation}\label{TRIGCFSLEM001E000}
Z = \big\{x y^a = t^{am}\big\} 
\end{equation}
and
\begin{equation}\label{TRIGCFSLEM001E001}
Q = \left\{x\prod_{i=1}^{n-1} (x+ b_i y) + O(t, x^{n+1}, x^n y,...,x y^{n}) = 0\right\},
\end{equation}
respectively, for some $a, m, n\in \ZZ^+$ and $b_1,b_2,...,b_{n-1}\in \CC$ satisfying the condition that $a, n\ge 2$ and
\begin{equation}\label{TRIGCFSLEM001E002}
b_1 + b_2 + ... + b_{n-1}\ne 0,
\end{equation}
where $O(t, x^{n+1}, x^n y,...,x y^{n})$ is an element of the ideal in $\CC[[x,y,t]]$ generated by $t, x^{n+1}, x^n y,...,x y^{n}$.
Then
\begin{equation}\label{TRIGCFSLEM001E011}
Z.Q = d D + \Sigma
\end{equation}
for $D = \{x = t = 0\}$, some $d\in \ZZ^+$ and some (nonempty) curve $\Sigma \subset R$ flat over $\Delta = \{|t| < 1\}$ and there does not exists a section $\Gamma$ of $R/\Delta$ such that $\Sigma = an \Gamma$.
\end{Lemma}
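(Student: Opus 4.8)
The plan is to localize the cycle $Z\cdot Q$ along the central fibre and then count the analytic branches of the residual curve.

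\emph{Step 1 (the decomposition).} First I would check that $D=\{x=t=0\}$ lies on both $Z$ and $Q$: on $D$ one has $xy^a-t^{am}=0$, while the defining function $g$ of $Q$ lies in the ideal $(t,x^{n+1},\dots,xy^n)$ and hence vanishes on $D$. Since $xy^a-t^{am}$ is irreducible, $Z$ is integral, and $g(x,0,0)=x^n(1+O(x))\not\equiv0$ shows $Z\not\subset Q$, so $Z\cdot Q$ is a genuine $1$-cycle. Its only component supported in $R_0=\{t=0\}$ is $D$: such a component would be common to the plane curves $Z_0=\{x=0\}\cup a\{y=0\}$ and $Q_0=\{x(\prod_i(x+b_iy)+\cdots)=0\}$, and the same computation $g|_{y=t=0}=x^n(1+O(x))\not\equiv0$ rules out $\{y=0\}$. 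Thus $Z\cdot Q=dD+\Sigma$ with $d\in\ZZ^+$ and $\Sigma$ the horizontal (hence $\Delta$-flat) part; since $D\subset R_0$ one has $\Sigma_t=Z_t\cap Q_t$ for $t\neq0$, and the branches produced in Step 3 show $\Sigma\neq\emptyset$.

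\emph{Step 2 (reduction).} If $\Sigma=an\Gamma$ for a section $\Gamma$, then each fibre $\Sigma_t$ is a single point, so it suffices to show that $Z_t\cap Q_t$ contains at least two distinct points for $0<|t|\ll1$. For $t\neq0$ the curve $Z_t$ is smooth and parametrized by $\tau\mapsto(x,y)=(\tau^{-a},\tau t^m)$; its intersection with $Q_t$ inside the polydisc corresponds to the zeros $\tau$ with $1<|\tau|<|t|^{-m}$ of
\[
\Phi(\tau,t):=\tau^{an}\,g(\tau^{-a},\tau t^m,t)=\prod_{i=1}^{n-1}\bigl(1+b_i\tau^{a+1}t^m\bigr)+\tau^{an}E(\tau,t),
\]
where $E$ collects the images of the ideal generators $t,x^{n+1},\dots,xy^n$. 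Expanding gives $\Phi=1+(\textstyle\sum_i b_i)\tau^{a+1}t^m+\cdots$, so the zeros relevant to $\Sigma$ all escape to $\tau=\infty$ as $t\to0$, along branches $\tau\sim t^{-\rho}$ with $0<\rho\le m/(a+1)$ (the upper bound because for $\rho>m/(a+1)$ the monomial $(\sum_i b_i)\tau^{a+1}t^m$ would dominate and blow up).

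\emph{Step 3 (the count).} I would run a Newton-polygon/dominant-balance analysis of $\Phi$. The key book-keeping fact is that every monomial $c\,\tau^k t^l$ of $\Phi$ lying on a relevant lowest edge (i.e.\ with $l/k\le m/(a+1)$) has $k\ge2$: a direct inspection of the exponents produced by $\tau^{an}E$ shows that the monomials with $k=1$ all satisfy $l/k>m/(a+1)$, so they contribute only to branches running out along $D$, not to $\Sigma$. Moreover a relevant branch always exists, since the constant term $1$ of $\Phi$ must be cancelled and the monomial $(\sum_i b_i)\tau^{a+1}t^m$ is available for this precisely because $\sum_i b_i\neq0$. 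Writing $\tau=s\,t^{-\rho}$ at the dominant slope, the leading polynomial $P(s)$ satisfies $P(0)=1$ and $\deg_sP\ge2$; solving $P(s)=0$ then yields at least two distinct values of $s$, hence at least two distinct points of $Z_t\cap Q_t$. In the extreme case where $E$ contributes nothing at the dominant slope, $P(s)=\prod_i(1+b_is^{a+1})$, and $\sum_i b_i\neq0$ (forcing some $b_i\neq0$) produces the $a+1\ge3$ distinct roots of $1+b_is^{a+1}=0$. This contradicts $\Sigma=an\Gamma$.

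The main obstacle is the uniformity of Step 3 over all admissible perturbations $O(t,x^{n+1},\dots,xy^n)$: one must verify the exponent bound $k\ge2$ for every such $E$ and, crucially, rule out the degenerate possibility that $P$ is a single power $c(s-s_0)^{\deg P}$, which alone would be compatible with a single multiplicity-$an$ section. It is exactly here that $\sum_i b_i\neq0$ is indispensable, since it both supplies the balancing monomial and, through the term $(\sum_i b_i)s^{a+1}$ appearing in $P$, prevents $P$ from collapsing to one root. Separating genuine points of $\Sigma$ from the $D$-part and from the spurious zeros introduced by the factor $\tau^{an}$ is the remaining technical point to be handled with care.
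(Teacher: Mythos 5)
Your Steps 1--2 are sound and your route is essentially the paper's in different coordinates: the paper normalizes $Z$ to $\{uv=t^m\}$ and analyzes the Newton polygon of $g(v,t)=v^{an}\psi^*f$, and your $\Phi(\tau,t)$ is exactly $g(v,t)/t^{amn}$ under $v=\tau t^m$. The genuine gap is the last step of Step 3, precisely the point you yourself flag as unresolved. From ``$P(0)=1$ and $\deg_s P\ge 2$'' one cannot conclude that $P$ has two distinct roots, and the mechanism you propose for excluding $P(s)=c(s-s_0)^{\deg P}$ --- the presence of the monomial $(\sum_i b_i)s^{a+1}$ in $P$ --- fails on both counts. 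First, that monomial lies on the dominant edge only when the dominant slope equals $m/(a+1)$ exactly; if the perturbation $E$ contributes a monomial of strictly smaller slope (e.g.\ $xt$ when $a=2$, $n=2$, $m=3$ gives $\tau^2t$, of slope $1/2<m/(a+1)=1$), the term $(\sum_i b_i)\tau^{a+1}t^m$ drops out of $P$ altogether. Second, even when it is present, a nonzero coefficient of $s^{a+1}$ is entirely compatible with $P$ being a power of a linear form, since every coefficient of $c(s-s_0)^N$ is nonzero when $s_0\ne 0$. The obstruction the paper actually uses is the absence of the \emph{linear} term: after Weierstrass preparation every nonconstant monomial of $\Phi$ has $\tau$-degree at least $a\ge 2$ (a sharpening of your own book-keeping fact), so $P(s)=1+s^a\phi(s)$ has zero coefficient of $s^1$, whereas $c(s-s_0)^N$ with $N\ge 2$ and $s_0\ne 0$ (forced by $P(0)=1$) has coefficient $cN(-s_0)^{N-1}\ne 0$ of $s^1$. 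This one-line observation closes the argument and is missing from your write-up. Note also that $\sum_i b_i\ne 0$ is needed only to guarantee that the dominant edge is nontrivial with slope at most $m/(a+1)$, hence that $\Sigma\ne\emptyset$; it plays no role in the non-collapse step.

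A secondary but necessary repair: before forming $\Phi=\tau^{an}g(\tau^{-a},\tau t^m,t)$ you must put the equation of $Q$ into Weierstrass form, monic of degree $n$ in $x$ (legitimate since $g(x,0,0)=x^n(1+O(x))$, and the coefficient $\sum_i b_i$ of $x^{n-1}y$ in the degree-$n$ part survives this normalization). Otherwise the generators $x^{n+1},x^ny,\dots$ of the error ideal contribute monomials $x^p$ with $p>n$, whose pullback $\tau^{a(n-p)}$ has a negative exponent, and $\Phi$ is not a power series in $\tau$; it is also exactly this normalization that makes your exponent bound verifiable. The paper goes one step further and reduces the equation modulo $xy^a-t^{am}$ so that no monomial $x^py^q$ with $p\ge 1$ and $q\ge a$ survives, which streamlines the same computation.
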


\begin{proof}
Let $f(x,y,t) = 0$ be a defining equation of $Q$. By the Weierstrass Preparation Theorem, we may choose $f(x,y,t)$ to be a monic polynomial in $\CC[[y,t]][x]$ of degree $n$ in $x$. In addition, we may replace $Q$ by $$Q' = \{f(x,y,t) + (xy^a - t^{am})g(x,y,t) = 0\}$$
for any $g(x,y,t)\in \CC[[x,y,t]]$ since $Z.Q = Z.Q'$. 

For every $\varphi(x,y,t)\in \CC[[x,y,t]]$, there exists $g(x,y,t)\in \CC[[x,y,t]]$ such that
\begin{equation}\label{TRIGCFSLEM001E006}
\varphi(x,y,t) + (xy^a - t^{am}) g(x,y,t)
= \sum_{p=1}^\infty \sum_{q=0}^{a-1} \varphi_{p,q}(t) x^p y^q
+ \varphi_0(y,t)
\end{equation}
for some $\varphi_{p,q}(t)\in \CC[[t]]$ and $\varphi_0(y,t)\in \CC[[y,t]]$. In short, the right hand side of \eqref{TRIGCFSLEM001E006} does not have terms $x^p y^q$ for $p\ge 1$ and $q\ge a$.

In other words, we can choose the right hand side of \eqref{TRIGCFSLEM001E006} as a representation of $\varphi(x,y,t)$ in the ring $\OO_Z = \CC[[x,y,t]]/(xy^a - t^{am})$. In addition, this representation is unique. So we can define a $\CC$-linear map
$\sigma: \CC[[x,y,t]] \to \CC[[x,y,t]]$ by
\begin{equation}\label{TRIGCFSLEM001E012}
\sigma(\varphi(x,y,t))
= \sum_{p=1}^\infty \sum_{q=0}^{a-1} \varphi_{p,q}(t) x^p y^q
+ \varphi_0(y,t).
\end{equation}
After we replace $f(x,y,t)$ by $\sigma(f(x,y,t))$, we have
\begin{equation}\label{TRIGCFSLEM001E005}
\begin{aligned}
f(x,y,t) &= x^n + b x^{n-1}y + \sum_{p=0}^{n-1}\sum_{q=0}^\infty
f_{p,q}(t) x^p y^q 
\\
\text{where } b &= b_1 + b_2 + ... + b_{n-1}\ne 0,\ f_{p,q}(t) \in \CC[[t]],\\
f_{p,q}(t) &\equiv 0 \hspace{6pt}\text{for all } p\ge 1 \text{ and } q\ge a, \hspace{12pt}\text{and}\\
f_{p,q}(0) &= 0 \hspace{6pt}\text{for } p+q < n \text{ or } (p,q) = (n-1,1) \text{ or } p=0.
\end{aligned}
\end{equation}

Let $\psi: X\to R$ be the normalization of $Z$. Clearly,
$$X \cong \{uv=t^m\}\subset \Delta_{uvt}^3 \hspace{12pt}\text{and}\hspace{12pt} \psi(u,v,t) = (u^a, v,t).$$
Let
\begin{equation}\label{TRIGCFSLEM001E003}
\begin{aligned}
g(v,t) &= v^{an} \psi^* (f(x,y,t)) = v^{an} f(u^a, v, t)
\\
&= v^{an}\big(u^{an} + b u^{a(n-1)}v + \sum_{p=0}^{n-1}\sum_{q=0}^\infty
f_{p,q}(t) u^{ap} v^q\big)\\
&= t^{amn} + b t^{am(n-1)} v^{a+1} + \sum_{p=0}^{n-1}\sum_{q=0}^\infty
f_{p,q}(t) t^{amp} v^{a(n-p) + q}
\end{aligned}
\end{equation}

We let $\delta\in \QQ_{\ge 0}$ be the smallest number such that
$$
\nu(g(t^\delta v, t)) = amn
$$
where $\nu(g)$ is defined as before to be the largest number $l$ such that $t^l \mid g$.
That is,
$$
\delta = \max\left\{\dfrac{am}{a+1}\right\} \cup \left\{\dfrac{am(n-p) - \nu(f_{p,q}(t))}{a(n-p)+q}: 0\le p<n, q\ge 0\right\}
$$
where we let $\nu(0) = \infty$.
After a base change, we may assume that $\delta\in \ZZ$.
Clearly, $0<\delta < m$.

We let
$$
h(w, t) = \dfrac{g(t^\delta w, t)}{t^{amn}}.
$$
It is easy to see from \eqref{TRIGCFSLEM001E003} that
\begin{equation}\label{TRIGCFSLEM001E004}
h(w, 0) = 1 + w^a \phi(w)
\end{equation}
for some $\phi(w)\in \CC[w]$. Due to our choice of $\delta$, $\phi(w) \not\equiv 0$. So there exists $w_0\in \CC^*$ such that $h(w_0, 0) = 0$. It follows that there exist $e\in \ZZ^+$ and $\gamma(t)\in \CC[[\sqrt[e]{t}]]$ such that $\gamma(0) = w_0$ and $g(t^\delta \gamma(t), t) \equiv 0$. In other words, $\beta(t) = t^\delta \gamma(t)$ satisfies that $0<\nu(\beta(t))<m$ and $g(\beta(t),t)\equiv 0$. This implies that $\psi^{-1}(Q)$ contains a component flat over $\Delta$ and it proves that $\Sigma \ne \emptyset$ in \eqref{TRIGCFSLEM001E011}.

Suppose that there is a section $\Gamma$ of $R/\Delta$ satisfying 
$$
Z . Q = d D + an\Gamma
$$
This is equivalent to saying that
$$
\psi^* Q = dC_u + an \Gamma_X
$$
where $C_u = \{u = t=0\}$ and $\Gamma_X = \psi^{-1}(\Gamma)$. So $h(w,0)$ has a unique zero of multiplicity $an$. That is,
$$
h(w,0) = c(w - w_0)^{an}
$$
for some $c\ne 0$. Clearly, this is impossible by \eqref{TRIGCFSLEM001E004} since $a\ge 2$.
\end{proof}

\begin{Lemma}\label{TRIGCFSLEM002}
Let $Z$ and $Q$ be analytic subvarieties of $R \cong \Delta_{xyt}^3$ given by
\begin{equation}\label{TRIGCFSLEM002E000}
Z = \big\{x y^2 = t^{2m} + t^{m+c} y\big\} 
\end{equation}
and
\begin{equation}\label{TRIGCFSLEM002E001}
Q = \Big\{f(x,y,t) = x\prod_{i=1}^{n-1} (x+ b_i y) + O(t, x^{n+1}, x^n y,...,x y^{n}) = 0\Big\},
\end{equation}
respectively, for some $c, m, n\in \ZZ^+$ and $b_1,b_2,...,b_{n-1}\in \CC$ satisfying that $m > c$, $n\ge 2$ and
\begin{equation}\label{TRIGCFSLEM002E002}
b_1 + b_2 + ... + b_{n-1}\ne 0,
\end{equation}
where $O(t, x^{n+1}, x^n y,...,x y^{n})$ is an element of the ideal in $\CC[[x,y,t]]$ generated by $t, x^{n+1}, x^n y,...,x y^{n}$.
Then
\begin{equation}\label{TRIGCFSLEM002E012}
Z.Q = d D + \Sigma
\end{equation}
for $D = \{x = t = 0\}$, some $d\in \ZZ^+$ and some (nonempty) curve $\Sigma \subset R$ flat over $\Delta = \{|t| < 1\}$ and there does not exists a section $\Gamma$ of $R/\Delta$ such that $\Sigma = 2n \Gamma$.
\end{Lemma}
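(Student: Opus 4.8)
The plan is to mimic the proof of Lemma~\ref{TRIGCFSLEM001} with $a = 2$, carrying along the effect of the extra term $t^{m+c}y$ in the equation of $Z$. As in the normal form \eqref{TRIGCFSLEM000E001}--\eqref{TRIGCFSLEM000E002}, the normalization of $Z = \{xy^2 = t^{2m} + t^{m+c}y\}$ is $\psi\colon X \cong \{uv = t^m\} \to R$ with $\psi(u,v,t) = (u^2 + t^c u, v, t)$; equivalently, since $m > c$, on $X$ one has $x = t^{m+c}(v + t^{m-c})/v^2$, obtained from $xv^2 = t^{2m} + t^{m+c}v$ by setting $u = t^m/v$. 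First I would put $f$ into the standard form \eqref{TRIGCFSLEM001E005}, using Weierstrass preparation together with reduction modulo the ideal of $Z$ (now the relation is $xy^2 = t^{2m} + t^{m+c}y$), so that $f = x^n + b\,x^{n-1}y + (\text{terms with } p \le n-1)$ with $b = b_1 + \dots + b_{n-1} \ne 0$.

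Next I would pull back and set $g(v,t) = v^{2n}\psi^* f$, substituting $x = t^{m+c}(v + t^{m-c})/v^2$. The two leading terms of $f$ then assemble into the factored expression
\begin{equation*}
g_0(v,t) = t^{(m+c)(n-1)}(v + t^{m-c})^{n-1}\bigl(b\,v^3 + t^{m+c}(v + t^{m-c})\bigr),
\end{equation*}
and $g = g_0$ plus the contributions of the lower terms $f_{p,q}$. Exactly as in Lemma~\ref{TRIGCFSLEM001}, I would let $\delta \in \QQ_{\ge 0}$ be the smallest number with $\nu(g(t^\delta w, t)) = 2mn$, pass to a base change making $\delta \in \ZZ^+$, and set $h(w,t) = g(t^\delta w, t)/t^{2mn}$. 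Because every term of $g$ other than the pure $t^{2mn}$ coming from $v^{2n}x^n|_{v = 0}$ carries a positive power of $v$, one gets $h(0,0) = 1$ robustly; hence every root of $h(w,0)$ is nonzero, and producing one such root $w_0$ yields a branch $\beta(t) = t^\delta\gamma(t)$ with $0 < \nu(\beta) < m$ and $g(\beta, t) \equiv 0$, which gives $\Sigma \ne \emptyset$ in \eqref{TRIGCFSLEM002E012}, just as in Lemma~\ref{TRIGCFSLEM001}.

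It then remains to exclude $\Sigma = 2n\Gamma$, which as in Lemma~\ref{TRIGCFSLEM001} is equivalent to $h(w,0)$ being a constant times $(w - w_0)^{2n}$ with $w_0 \in \CC^*$. To leading order $g_0$ gives the threshold $\delta = \max(m - c,\, 2m/3)$, and the regimes $c < m/3$, $c = m/3$, $c > m/3$ produce
\begin{equation*}
h(w,0) = (w+1)^n, \qquad (w+1)^{n-1}(b\,w^3 + w + 1), \qquad b\,w^3 + 1,
\end{equation*}
respectively. None of these is a $2n$-th power $c(w - w_0)^{2n}$: the first has degree $n < 2n$; the second has degree $n + 2 < 2n$ for $n \ge 3$, while for $n = 2$ it equals $(w+1)(b\,w^3 + w + 1)$, in which $w = -1$ is a simple root because $b\,w^3 + w + 1$ does not vanish at $w = -1$ (here $b \ne 0$); and the third, $b\,w^3 + 1$ with $b \ne 0$, has three distinct nonzero roots whereas $2n \ge 4$. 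This is where the hypotheses $b \ne 0$ and $n \ge 2$ enter, and it is the analogue of the ``no linear term'' contradiction of Lemma~\ref{TRIGCFSLEM001}, which is no longer available once the twist $t^{m+c}y$ reinstates intermediate powers of $w$.

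The main obstacle I anticipate is controlling the lower terms $f_{p,q}$, which for Lemma~\ref{TRIGCFSLEM001} were harmless but here could interfere with the leading computation above. Concretely, one must check that the terms of $f$ lying in the ideal $(t, x^{n+1}, x^n y, \dots, x y^n) = (t) + x\,\mathfrak{m}^n$ neither push the threshold $\delta$ beyond $\max(m - c, 2m/3)$ nor supply the missing higher powers of $w$ needed to complete $h(w,0)$ into a genuine $2n$-th power $c(w - w_0)^{2n}$. Verifying this — that the constraint $Q \subset (t) + x\,\mathfrak{m}^n$ forces the correction terms to carry enough extra $t$, hence higher $\nu$, to stay subleading at the relevant $\delta$ across all three regimes — is the delicate bookkeeping at the heart of the proof, and it is precisely the point at which the explicit factored form of $g_0$ and the condition $b \ne 0$ must be used.
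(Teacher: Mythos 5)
Your setup is sound and your leading-order computations are correct: the normalization $\psi(u,v,t)=(u^2+t^cu,v,t)$, the factored form of $g_0$, the threshold $\max(m-c,2m/3)$ coming from $g_0$ alone, and the three limit polynomials are all right. But the proof has a genuine gap exactly where you flag it, and the resolution you sketch for that gap does not work. The correction terms $f_{p,q}$ \emph{cannot} in general be shown to be subleading: for instance the constant term $f_{0,0}(t)$ of the Weierstrass normal form contributes $f_{0,0}(t)v^{2n}$ to $g$, which enters $h(w,0)$ at the threshold $\delta=m-c$ whenever $\nu(f_{0,0})=2nc$ (raising $\deg_w h(w,0)$ to $2n$, precisely the degree needed for a $2n$-th power), and shifts the threshold strictly above $m-c$ whenever $\nu(f_{0,0})<2nc$. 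So the trichotomy $h(w,0)\in\{(w+1)^n,\ (w+1)^{n-1}(bw^3+w+1),\ bw^3+1\}$ is only the generic picture, and the hard case --- $\delta=m-c$ with threshold-level contributions from the $f_{p,q}$ --- is left entirely open. Incidentally, the case $\delta>m-c$ (which subsumes your regime $c>m/3$ \emph{and} all the cases where the corrections push the threshold up) is the easy one: since $\nu(g_1)=2m(n-1)+m+c$ exactly, the linear term of $h(w,0)$ then vanishes, so $h(w,0)=1+w^2\lambda(w)$ cannot equal $c(w-w_0)^{2n}$ with $w_0\ne0$; the ``no linear term'' argument from Lemma \ref{TRIGCFSLEM001} is thus still available here, contrary to your closing remark.

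The paper's treatment of the remaining case $\delta=m-c$ is substantively different from what you propose and is the real content of the lemma. One first proves $\min_k\nu(g_k)\ge 2nc$ --- and this step uses a global input you never invoke, namely that $\Sigma=2n\Gamma$ forces $\Gamma_X$ to be the \emph{unique} flat component of $\psi^*Q$, so a second Newton-polygon slope would produce a second flat branch and hence a contradiction between $\deg_w h(w,0)=2n$ and the larger degree of the auxiliary polynomial $\widehat h(w,0)$. This yields the exact factorization $g(v,t)=(t^m-\alpha(t)v)^{2n}\theta(v,t)$; after absorbing $\theta$, one completes the square by rewriting $f$ in powers of $x+t^{2c}/4$ (so that $v^2(x+t^{2c}/4)=(t^m+t^cv/2)^2$), compares coefficients of $v$ to get $\alpha(t)=-t^c/2$, and then reads off a contradiction from the coefficient of $v^3$, which contains the uncancellable term $b\,t^{2m(n-1)}$ --- this is where $b\ne0$ actually enters, not in the root pattern of the leading-order polynomials. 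Without some version of this argument (or another mechanism for handling threshold-level corrections), your proof is incomplete.
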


\begin{proof}
The argument is similar to that of Lemma \ref{TRIGCFSLEM001}.
As before, we can define a $\CC$-linear map
$\sigma: \CC[[x,y,t]]\to \CC[[x,y,t]]$ as in \eqref{TRIGCFSLEM001E012} with respect to the ideal $(xy^2 - (t^{2m} + t^{m+c} y))$. That is, for every $\varphi(x,y,t)\in \CC[[x,y,t]]$, we let $\sigma(\varphi(x,y,t))$ be the (unique) power series such that
\begin{equation}\label{TRIGCFSLEM002E004}
\begin{aligned}
&\sigma(\varphi(x,y,t)) - \varphi(x,y,t) \in (xy^2 - (t^{2m} + t^{m+c} y)), \hspace{12pt} \text{and}
\\
& \sigma(\varphi(x,y,t)) 
= \sum_{p=1}^\infty \sum_{q=0}^{1} \varphi_{p,q}(t) x^p y^q
+ \varphi_0(y,t)
\end{aligned}
\end{equation}
for some $\varphi_{p,q}(t)\in \CC[[t]]$ and $\varphi_0(y,t)\in \CC[[y,t]]$.

After we apply Weierstrass Preparation Theorem to $f(x,y,t)$ and replace it by $\sigma(f(x,y,t))$, we obtain
\begin{equation}\label{TRIGCFSLEM002E005}
\begin{aligned}
f(x,y,t) &= x^n + b x^{n-1}y + \sum_{p=0}^{n-1}\sum_{q=0}^\infty
f_{p,q}(t) x^p y^q 
\\
\text{where } b &= b_1 + b_2 + ... + b_{n-1}\ne 0,\ f_{p,q}(t) \in \CC[[t]],\\
f_{p,q}(t) &\equiv 0 \hspace{6pt}\text{for all } p\ge 1 \text{ and } q\ge 2, \hspace{12pt}\text{and}\\
f_{p,q}(0) &= 0 \hspace{6pt}\text{for all }p,q.
\end{aligned}
\end{equation}

Let $\psi: X\to R$ be the normalization of $Z$. Clearly,
$$X \cong \{uv=t^m\}\subset \Delta_{uvt}^3 \hspace{12pt}\text{and}\hspace{12pt} \psi(u,v,t) = (u^2 + t^c u, v,t).$$
Let us consider
\begin{equation}\label{TRIGCFSLEM002E010}
\begin{aligned}
g(v,t) &= v^{2n} \psi^* (f(x,y,t)) = v^{2n} f(u^2 + t^c u, v, t)
\\
&= v^{2n}\big((u^2 + t^c u)^{n} + b (u^2 + t^c u)^{n-1} v\\
&\hspace{36pt} + \sum_{p=0}^{n-1}\sum_{q=0}^\infty
f_{p,q}(t) (u^2 + t^c u)^{p} v^q\big)\\
&= (t^{2m} + t^{m+c} v)^{n} + b (t^{2m} + t^{m+c} v)^{n-1} v^3\\
&\hspace{12pt} + \sum_{p=0}^{n-1}\sum_{q=0}^\infty
f_{p,q}(t) (t^{2m} + t^{m+c} v)^{p} v^{2(n-p) + q}\\
&= t^{2mn} + \sum_{k=1}^\infty g_k(t) v^k \hspace{24pt} \text{for some } g_k(t)\in \CC[[t]].
\end{aligned}
\end{equation}
As before, we see that $\psi^{-1}(Q)$ has a component flat over $\Delta$ if and only if there exist $e\in \ZZ^+$ and $\beta(t)\in \CC[[\sqrt[e]{t}]]$ such that $0 < \nu(\beta(t)) < m$ and $g(\beta(t),t)\equiv 0$.

Let
\begin{equation}\label{TRIGCFSLEM002E009}
\delta = \max \left\{\dfrac{2mn - \nu(g_k(t))}{k}: k\ge 1\right\}
\end{equation}
where $\nu(g)$ is defined as before to be the largest number $l$ such that $t^l \mid g$.

After a base change, we may assume that $\delta\in \ZZ$. 
Clearly, 
\begin{equation}\label{TRIGCFSLEM002E017}
\nu(g_1(t)) = 2m(n-1) + m+c
\end{equation}
and hence $\delta \ge m-c$. And since $\nu(g(t^m v, t) - t^{2mn}) > 2mn$,
$\delta < m$. Thus,
$$
m-c \le \delta < m.
$$ 
Let
$$
h(w,t) = \dfrac{g(t^\delta w, t)}{t^{2mn}}.
$$
Obviously, $h(w,0)\in \CC[w]$. By our choice of $\delta$,
$$h(0, 0)\ne 0 \hspace{12pt}\text{and}\hspace{12pt} \deg_w h(w,0) \ge 1.$$
So there exists $w_0\ne 0$ such that $h(w_0, 0) = 0$. Consequently, there exist $e\in \ZZ^+$ and $\gamma(t)\in \CC[[\sqrt[e]{t}]]$ such that $\gamma(0) = w_0$ and $h(\gamma(t), t)\equiv 0$. That is, $g(\beta(t), t) \equiv 0$ for $\beta(t) = t^\delta \gamma(t)$ with $0 < \nu(\beta(t)) = \delta < m$. Hence $\psi^{-1}(Q)$ has a component flat over $\Delta$.
This proves that $\Sigma\ne \emptyset$ in \eqref{TRIGCFSLEM002E012}.

Suppose that there exists a section $\Gamma$ of $R/\Delta$ such that
$$
Z.Q = d D + 2n\Gamma
$$
or equivalently,
\begin{equation}\label{TRIGCFSLEM002E008}
\psi^* Q = d C_u + 2n \Gamma_X
\end{equation}
where $C_u = \{u=t=0\}$ and $\Gamma_X = \psi^{-1}(\Gamma)$ are the proper transforms of $D$ and $\Gamma$ under $\psi$, respectively. Then $h(w,0)$ has a unique root of multiplicity $2n$.

If $\delta > m-c$, then by \eqref{TRIGCFSLEM002E017},
\begin{equation}\label{TRIGCFSLEM002E006}
h(w,0) = 1 + w^2 \lambda(w)
\end{equation}
for some polynomial $\lambda(w)\not\equiv 0\in \CC[w]$. Clearly, $h(w,0)$ has at least two distinct roots in $\CC^*$. So $\delta = m - c$ and
\begin{equation}\label{TRIGCFSLEM002E011}
\begin{aligned}
\Gamma_X &= \big\{u = \alpha(t), v = \beta(t)\big\}\subset X\\
&\text{for some } \alpha(t), \beta(t)\in \CC[[t]] \text{ satisfying that}\\
&\hspace{24pt} \alpha(t)\beta(t)\equiv t^m \hspace{12pt}\text{and}\hspace{12pt}\nu(\alpha(t)) = c.
\end{aligned}
\end{equation}

Note that $(v-\beta(t))^{2n} \mid g(v,t)$. We claim that
\begin{equation}\label{TRIGCFSLEM002E003}
g(v,t) = (t^m - \alpha(t) v)^{2n} \theta(v,t)
\end{equation}
for some $\theta(v,t)\in \CC[[v,t]]$ with $\theta(0,0) = 1$.
This follows from the Weierstrass Preparation Theorem if
\begin{equation}\label{TRIGCFSLEM002E018}
\ell = \min \big\{\nu(g_k(t)): k\ge 1\big\} \ge 2nc
\end{equation}
Let us prove \eqref{TRIGCFSLEM002E018}. Otherwise, suppose by contradiction that $\ell < 2nc$. Let $a$ be the smallest positive integer such that $\nu(g_a(t)) = \ell$.

It is clear that
$$
\nu(g_k(t)) = 2mn - (m-c)k
$$
for $1\le k \le 2n$. So $a > 2n$ and $\nu(g_k(t)) > \nu(g_a(t))$ for all $k < a$. Let
$$
\mu = \min \left\{\dfrac{2mn-\ell}{a}\right\}\cup \left\{\dfrac{\nu(g_k(t)) - \ell}{a-k}: 1\le k < a\right\}.
$$
After a base change, we may assume that $\mu\in \ZZ$. Clearly, $0 < \mu < m$. Let
$$
\widehat{h}(w,t) = \dfrac{g(t^{\mu}w,t)}{t^{a\mu +\ell}}.
$$
By our choice of $\mu$, $\widehat{h}(w,0)$ is a polynomial in $w$ of degree $a$ with at least two nonzero monomial terms. So $\widehat{h}(w,0)$
has a nonzero root. It follows that after a base change, there exists
$\widehat{\gamma}(t)\in \CC[[t]]$ such that $\widehat{\gamma}(0)\ne 0$ and $\widehat{h}(\widehat{\gamma}(t),t)\equiv 0$. So $g(\widehat{\beta}(t),t) \equiv 0$ for $\widehat{\beta}(t) = t^\mu \widehat{\gamma}(t)$.

Since $\Gamma_X$ is the only component of $\psi^{-1}(Q)$ that is flat over $\Delta$, we must have $\beta(t)\equiv \widehat{\beta}(t)$. This implies that $\delta = \mu$ and hence $h(w,t) \equiv \widehat{h}(w,t)$. But
$$
\deg h(w,0) = 2n < a = \deg \widehat{h}(w,0)
$$
which is contradiction. This proves \eqref{TRIGCFSLEM002E018} and hence \eqref{TRIGCFSLEM002E003}.

After we replace $f(x,y,t)$ by $\sigma(f(x,y,t)(\theta(y,t))^{-1})$, we obtain
\begin{equation}\label{TRIGCFSLEM002E007}
g(v,t) = v^{2n} \psi^* (f(x,y,t)) = (t^m - \alpha(t) v)^{2n}
\end{equation}
Let us rewrite $f(x,y,t)$ as
\begin{equation}\label{TRIGCFSLEM002E013}
\begin{aligned}
f(x,y,t) &= \left(x + \dfrac{t^{2c}}4\right)^n + b \left(x + \dfrac{t^{2c}}4\right)^{n-1}y\\
 &\quad + \sum_{p=0}^{n-1}\sum_{q=0}^\infty
\widehat{f}_{p,q}(t) \left(x + \dfrac{t^{2c}}4\right)^p y^q 
\\
\text{where } b &= b_1 + b_2 + ... + b_{n-1}\ne 0,\ \widehat{f}_{p,q}(t) \in \CC[[t]],\\
\widehat{f}_{p,q}(t) &\equiv 0 \hspace{6pt}\text{for all } p\ge 1 \text{ and } q\ge 2, \hspace{12pt}\text{and}\\
\widehat{f}_{p,q}(0) &= 0 \hspace{6pt}\text{for all }p,q.
\end{aligned}
\end{equation}
Then
$$
\begin{aligned}
g(v,t) 
&= \left(t^{2m} + t^{m+c} v + \dfrac{t^{2c}v^2}4\right)^{n} + b \left(t^{2m} + t^{m+c} v + \dfrac{t^{2c}v^2}4 \right)^{n-1} v^3\\
&\hspace{12pt} + \sum_{p=0}^{n-1}\sum_{q=0}^\infty
\widehat{f}_{p,q}(t) \left(t^{2m} + t^{m+c} v + \dfrac{t^{2c}v^2}4 \right)^{p} v^{2(n-p) + q}\\
&= \left(t^{m} + \dfrac{t^{c}v}2\right)^{2n} + b \left(t^{m} + \dfrac{t^{c}v}2\right)^{2n-2} v^3\\
&\hspace{12pt} + \sum_{p=0}^{n-1}\sum_{q=0}^\infty
\widehat{f}_{p,q}(t) \left(t^{m} + \dfrac{t^{c}v}2\right)^{2p} v^{2(n-p) + q}
\end{aligned}
$$
Thus
\begin{equation}\label{TRIGCFSLEM002E014}
\begin{aligned}
(t^m - \alpha(t) v)^{2n} &\equiv \left(t^{m} + \dfrac{t^{c}v}2\right)^{2n} + b \left(t^{m} + \dfrac{t^{c}v}2\right)^{2n-2} v^3\\
&\hspace{12pt} + \sum_{p=0}^{n-1}\sum_{q=0}^\infty
\widehat{f}_{p,q}(t) \left(t^{m} + \dfrac{t^{c}v}2\right)^{2p} v^{2(n-p) + q}
\end{aligned}
\end{equation}
Comparing the coefficients of $v$ on the two sides of \eqref{TRIGCFSLEM002E014} yields
\begin{equation}\label{TRIGCFSLEM002E016}
\alpha(t) = -\dfrac{t^{c}}2
\end{equation}
Therefore,
\begin{equation}\label{TRIGCFSLEM002E015}
b \left(t^{2m} + \dfrac{t^{c}v}2\right)^{2n-2} v^3
+ \sum_{p=0}^{n-1}\sum_{q=0}^\infty
\widehat{f}_{p,q}(t) \left(t^{m} + \dfrac{t^{c}v}2\right)^{2p} v^{2(n-p) + q} \equiv 0.
\end{equation}
It is easy to see that \eqref{TRIGCFSLEM002E015} is impossible since $b\ne 0$.
\end{proof}

\begin{proof}[Proof of Proposition \ref{TRIGCFSPROP003}]
Let $Z = \psi(X)$ be the proper transform of $X$ under $\psi$.
Suppose that there is such an irreducible component $M$. Then $Z_0$ contains the component $J$ with multiplicity $2$. By the flatness of $Q$ over $\Delta$, $Q_0^2 = Q_t^2$ and $Q_0 K_{R_0} = Q_t K_{R_t}$. Hence $Q_0\in |B+3F|$ on $R_0$. Similarly, by the flatness of $Z$ over $\Delta$, $Z_0\in |3B + 8F|$. So we must have
\begin{equation}\label{TRIGCFSPROP003E002}
Z_0 = B + 2J
\end{equation}
where $B$ is the $(-3)$-curve on $R_0$ and $J$ is a smooth rational curve in $|B+4F|$.

Let $\pi: \widehat{X}\to X$ be the minimal resolution of the rational map $\psi: X\dashrightarrow R$. That is, $\pi$ consists of a sequence of blowups at points such that the map $\psi \circ \pi$ is regular as in the diagram
$$
\begin{tikzcd}
\widehat{X}\ar{d}{\pi} \ar{rd}{\psi\circ \pi}\\
X \ar[dashed]{r}{\psi} & R
\end{tikzcd} 
$$
and it is minimal in the sense that for every $(-1)$-curve $E\subset \widehat{X}$ with $\pi_* E = 0$, $(\psi\circ \pi)_* E \ne 0$. Since $B$ is the only component of $Z_0$ which might not be dominated by a component of $X_0$, we see that
\begin{itemize}
\item there exists at most one $(-1)$-curve $E\subset \widehat{X}$ such that $\pi_* E = 0$;
\item if such $E$ exists, $(\psi\circ \pi)_* E = B$ and $\widehat{X}_0$ is reduced along $E$.
\end{itemize}
Note that if there is no such $E$, $\psi$ is regular to start with and $\widehat{X} = X$.

From the above discussion, the exceptional divisor $E_\pi\subset \widehat{X}$ of $\pi$ is either empty or a chain of smooth rational curves
\begin{equation}\label{TRIGCFSPROP003E000}
E_\pi = E_1\cup E_2\cup ...\cup E_{m-1}\cup E_m
\end{equation}
where $(\psi\circ \pi)_* E_m = B$, $E_m^2 = -1$, $(\psi\circ \pi)_* E_i = 0$, $E_i^2 = -2$, $E_{i} E_{i+1} = 1$ and $E_j E_k = 0$ for $1\le i \le m-1$ and $|j-k| \ge 2$.

Here by a chain of curves, we referred to a reduced curve whose dual graph is a chain.

Let $q = \Gamma\cap M$. Since $X$ is smooth and $\Gamma$ is a section of $X/\Delta$, $X_0$ is smooth at $q$. We claim

\begin{Claim}\label{TRIGCFSPROP003CLM000}
The rational map $\psi: X\dashrightarrow R$ is not regular at $q$.
\end{Claim}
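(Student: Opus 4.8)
The plan is to argue by contradiction: I assume $\psi$ is regular at $q$ and extract a contradiction from the tangency datum $\psi^*Q = 8\Gamma$ together with the fact that $\psi$ collapses $M$ two-to-one onto $J$. The whole argument is local near $q$ and $r := \psi(q)$, so I may work in analytic neighbourhoods where $X_0 = M$ (the only component of $X_0$ through $q$, since $X_0$ is smooth at $q$) and where $X/\Delta$ is smooth, so that Lemma~\ref{TRIGCFSLEMDOUBLE} is applicable in principle.

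First I would determine the ramification of $\psi|_M$ at $q$. Over $\Delta^*$ one has $\psi^*Q = 8\Gamma$, so the vertical divisor $V = \psi^*Q - 8\Gamma$ is effective; moreover $J = \psi(M) \not\subset Q_0$ (an inclusion would force $-F = (B+3F)-(B+4F)$ to be effective), so $M$ is not a component of $V$ and $(\psi^*Q)|_M$ is a genuine effective divisor on $M$. Comparing degrees, the degree of $(\psi^*Q)|_M$ equals $\psi^*Q\cdot M = 2\,(Q_0\cdot J) = 2(B+3F)(B+4F) = 8$, while $(8\Gamma)|_M = 8q$; since $V|_M \ge 0$ this forces $V|_M = 0$ and $(\psi^*Q)|_M = 8q$. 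Hence $\psi|_M^{-1}(Q_0\cap J) = \{q\}$, so $Q_0\cap J = \{r\}$, and writing $e_q$ for the ramification index of the degree-two map $\psi|_M$ at $q$ we get $e_q(Q_0\cdot J)_r = 8$ with $(Q_0\cdot J)_r \le Q_0\cdot J = 4$ and $e_q \le 2$. Thus $e_q = 2$ and $(Q_0\cdot J)_r = 4$: the map $\psi|_M$ is totally ramified at $q$ and $Q_0$ meets $J$ only at $r$, to order $4$.

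The main line is then to invoke Lemma~\ref{TRIGCFSLEMDOUBLE} with $\tau = \psi$ and $\Lambda = Q$. Indeed $\tfrac12(\psi^*Q\cdot X_0)_q = (Q_0\cdot J)_r = 4$, so $\operatorname{mult}_\Gamma \psi^*Q = 8 \ge \tfrac12(\psi^*Q\cdot X_0)_q + 2 = 6$, which is exactly the inequality \eqref{TRIGCFSLEMDOUBLEE003} that the lemma forbids --- a contradiction. For this to be legitimate I must verify the hypotheses of Lemma~\ref{TRIGCFSLEMDOUBLE}, the only delicate one being that $Q$ is smooth over $\Delta$ near $r$, i.e. that $Q_0$ is smooth at $r$. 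Since $J\cdot B = (B+4F)B = 1$, I split according to whether $r$ lies on the $(-3)$-curve $B$. If $r\notin B$, then $Q_0$ cannot contain $B$ (otherwise $Q_0\cap J$ would contain the point $J\cap B \ne r$), and every member of $|B+3F|$ not containing $B$ is integral (a reducible member necessarily splits off $B$); as $p_a(B+3F) = 0$ it is a smooth rational curve, so $Q_0$ is smooth at $r$, $Q$ is smooth over $\Delta$ there, and Lemma~\ref{TRIGCFSLEMDOUBLE} applies verbatim.

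The hard part is the remaining case $r\in B$, which I expect to be the main obstacle. There $Q_0\cdot B = (B+3F)B = 0$ forces $B\subset Q_0$, and $Q_0\cap J = \{r\}$ then forces $Q_0 = B + 3F_r$, with $F_r$ the ruling through $r$; this curve is singular at $r$, so Lemma~\ref{TRIGCFSLEMDOUBLE} is not directly available, and the individual smooth branches of $Q_0$ carry no control on $\operatorname{mult}_\Gamma$ (only $Q$ itself enjoys $\psi^*Q = 8\Gamma$). I would treat this by a dedicated local computation modelled on the proof of Lemma~\ref{TRIGCFSLEMDOUBLE}: choose analytic coordinates on $R$ at $r$ with $J = \{x = t = 0\}$ and $Q_0$ given locally as a product of a simple factor cutting out $B$ and a triple factor cutting out $F_r$, both branches meeting $J$ transversally at $r$; write $\psi(u,t) = (x(u,t), y(u,t), t)$ with $y(u,0)\sim u^2$ reflecting total ramification; substitute into $\psi^*Q = 8\Gamma$; and derive a contradiction from the closed-embedding (smoothness of $X$ over $\Delta^*$) hypothesis by a greatest-common-divisor argument parallel to \eqref{TRIGCFSLEMDOUBLEE001}. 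Once both cases yield contradictions, $\psi$ cannot be regular at $q$.
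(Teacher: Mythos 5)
Your first two steps are sound and essentially reproduce the paper's strategy: assuming regularity at $q$, you show the vertical part of $\psi^*Q$ vanishes along $M$, deduce that $\psi|_M$ is totally ramified at $q$ with $(Q_0.J)_r=4$, and then try to contradict Lemma \ref{TRIGCFSLEMDOUBLE} with $\Lambda=Q$. (One small caveat: $\psi^*Q$ is only a divisor where $\psi$ is regular, so the degree count $\psi^*Q\cdot M=2\,Q_0\cdot J$ should really be carried out on the resolution $\widehat{X}$ with $(\psi\circ\pi)^*Q\cdot\widehat{M}$; this is cosmetic.) The case $B\not\subset Q_0$, where $Q_0$ is an integral member of $|B+3F|$ and hence a smooth rational curve, is handled correctly and exactly as in the paper.

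The genuine gap is the case $r\in B$, i.e.\ $B\subset Q_0$ and $Q_0=B+3F_r$, which you correctly identify as the obstacle but then only sketch: the proposed ``dedicated local computation modelled on Lemma \ref{TRIGCFSLEMDOUBLE}'' at a point where $Q_0$ is both singular and non-reduced is never carried out, and it is not routine --- it would be of the same order of difficulty as Lemmas \ref{TRIGCFSLEM001} and \ref{TRIGCFSLEM002}, which the paper needs precisely for such configurations. The paper avoids this case altogether by a global argument that your purely local analysis cannot see. Writing $(\psi\circ\pi)^*Q=8\widehat{\Gamma}+\Sigma$ with $\Sigma$ vertical and disjoint from $\widehat{\Gamma}$, one computes
\begin{equation*}
\Sigma^2=\Sigma\cdot(8\widehat{\Gamma}+\Sigma)=(\psi\circ\pi)_*\Sigma\cdot Q=0
\end{equation*}
because every component of $\Sigma$ maps into $B$ or to a point and $B\cdot Q=B(B+3F)=0$; since $\Sigma$ does not contain the whole fibre ($\widehat{M}\not\subset\Sigma$), Zariski's lemma forces $\Sigma=0$. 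Now $Z_0=B+2J$, so some component of $\widehat{X}_0$ must dominate $B$; if $B\subset Q_0$ that component would lie in $\Sigma$, contradicting $\Sigma=0$. Hence $B\not\subset Q_0$ and only the smooth case survives. Your restriction-to-$M$ argument only shows the vertical part avoids $\widehat{M}$; it gives no control over the component of $\widehat{X}_0$ lying over $B$, which is exactly what is needed to exclude $B\subset Q_0$. To close the gap you should replace the local computation in the $r\in B$ case by this $\Sigma^2=0$ argument.
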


Suppose that $\psi$ is regular at $q$. Let $\widehat{q} = \pi^{-1}(q)$ and $s = \psi(q)$. Since $\psi(\Gamma)\subset Q$, $s\in Q_0$.
Clearly, $J$ and $Q_0$ meet properly at $s$. Therefore,
$$
(\psi\circ \pi)^* Q = 8\widehat{\Gamma} + \Sigma
$$
where $\widehat{\Gamma}\subset \widehat{X}$ is the proper transform of $\Gamma$ under $\pi$ and $\Sigma$ is an effective divisor with $\supp(\Sigma)\subset \widehat{X}_0$, $\psi\circ\pi(\Sigma) \subset B$ and $\widehat{\Gamma}\cap \Sigma = \emptyset$.

Let $\widehat{M}\subset \widehat{X}$ be the proper transform of $M$ under $\pi$. Since $\widehat{M}\not\subset \Sigma$, $\Sigma^2 < 0$ if $\Sigma\ne 0$. On the other hand,
$$
\Sigma^2 = \Sigma(8\widehat{\Gamma} + \Sigma) = \Sigma . (\psi\circ \pi)^* Q = (\psi\circ \pi)_* \Sigma . Q = 0
$$
since $\psi\circ\pi(\Sigma) \subset B$ and $BQ = 0$. Therefore, $\Sigma = 0$, $B\not\subset Q_0$ and hence $Q_0$ is smooth. And since $(\psi\circ \pi)^* Q = 8\widehat{\Gamma}$, we see that this is impossible by Lemma \ref{TRIGCFSLEMDOUBLE}. Thus, we have proved Claim \ref{TRIGCFSPROP003CLM000}.

So $\psi$ is not regular at $q$. Consequently, $\widehat{\pi}: \widehat{X}\to X$ is a sequence of blowups over $q$ and $\pi(E_\pi) = q$.
Indeed, $\widehat{M} E_1 = 1$ and
$$
\widehat{M}\cup E_\pi = \widehat{M}\cup E_1\cup ...\cup E_{m-1}\cup E_m
$$
is a chain of curves. It is also clear that $\widehat{\Gamma} E_\pi = 1$. 

Let $\widehat{q} = \widehat{M}\cap E_{1}$. Since $(\psi\circ \pi)_* (E_1 + ... + E_m) = B$, $s = \psi\circ \pi(\widehat{q})\in B$. And since $s\in J\cap Q_0$, $B\cap Q_0 \ne\emptyset$ and hence $B\subset Q_0$ as $Q_0\in |B+3F|$.

We claim

\begin{Claim}\label{TRIGCFSPROP003CLM001}
The maps $\psi: M\to J$ and $\psi\circ \pi: \widehat{M}\to J$ are totally ramified at $q$ and $\widehat{q}$, respectively.
\end{Claim}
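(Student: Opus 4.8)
The plan is to prove the equivalent statement that $g := \psi\circ\pi|_{\widehat M}\colon \widehat M \to J$ is totally ramified at $\widehat q$. Since $X$ is smooth and $\Gamma$ is a section through $q$, the fibre $X_0$ is smooth at $q$, so $M$ is smooth at $q$ and $\pi|_{\widehat M}\colon \widehat M \to M$ is an isomorphism sending $\widehat q$ to $q$; thus the two assertions of the claim are equivalent and it suffices to treat $g$. Because $BJ = B(B+4F)=1$ on $\FF_3$, the point $s = \psi\circ\pi(\widehat q)$ is the unique point of $B\cap J$, and as $\deg g = 2$ the claim reduces to showing that $\widehat q$ is the only point of $\widehat M$ lying over $s$.

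The main tool is a divisor identity on $\widehat X$. Since $\psi$ is a closed embedding over $\Delta^*$ with $\psi^* Q = 8\Gamma$ there, I would write $(\psi\circ\pi)^* Q = 8\widehat\Gamma + V$ with $V$ effective and supported on $\widehat X_0$. As $\Gamma$ and $M$ meet transversally at the smooth point $q$, their proper transforms separate, so $\widehat\Gamma\cap\widehat M=\emptyset$; and since $J\not\subset Q_0$ we have $\widehat M\not\subset\supp V$. Restricting to $\widehat M$ then gives $g^*Q = V|_{\widehat M}$, an effective divisor of degree $\deg g\cdot (Q_0\cdot J) = 2(B+3F)(B+4F)=8$ whose support consists of the points where $\widehat M$ meets the other components of $\widehat X_0$ occurring in $V$. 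In particular, since $s\in B\subset Q_0$ gives $s\in\supp(Q_0\cap J)$, every point of $\widehat M$ over $s$ must be such a node: it lies on a component $C\neq\widehat M$ of $\widehat X_0$ with $\psi\circ\pi(C)\subseteq Q_0$ and $s\in\psi\circ\pi(C)$.

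Next I would classify these components $C$. Because $\psi\circ\pi$ is birational, $(\psi\circ\pi)_*[\widehat X_0]=[Z_0]=B+2J$, while $(\psi\circ\pi)_*\widehat M = 2J$ and $(\psi\circ\pi)_*E_m = B$. Hence a component $C\neq \widehat M$ can neither dominate $J$ nor $B$ (either would overshoot the multiplicities in $Z_0$), so $C$ must be contracted by $\psi\circ\pi$ to a point of $Q_0$; that is, $C$ is one of the exceptional curves $E_i$ or the proper transform of a component of $X_0$ contracted by $\psi$. Among the $E_i$ only $E_1$ meets $\widehat M$, and $\widehat M\cap E_1 = \widehat q$, which yields exactly the expected preimage.

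The remaining and hardest point is to exclude a contracted component $M''\neq M$ of $X_0$ meeting $\widehat M$ at a second point over $s$. For this I would invoke normality: $Z$ is a Cartier divisor in the smooth threefold $R$, hence Cohen--Macaulay, and it is smooth in codimension one (each $Z_t$ with $t\neq0$ is smooth, and the generic points of $B$ and of $J$ are smooth points of $Z$), so Serre's criterion makes $Z$ normal and therefore $\psi\circ\pi\colon\widehat X\to Z$ has connected fibres. Since $M''$ does not pass through $q$, its proper transform is disjoint from the exceptional chain $E_1\cup\cdots\cup E_m$; were $M''$ contracted to $s$ and meeting $\widehat M$ away from $\widehat q$, it would constitute a connected component of $(\psi\circ\pi)^{-1}(s)$ disjoint from the chain, contradicting connectedness. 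Thus $g^{-1}(s)=\{\widehat q\}$ and $g$ is totally ramified at $\widehat q$. I expect the verification that $Z$ is normal, and the resulting connectedness of the fibres of $\psi\circ\pi$, to be the crux of the argument.
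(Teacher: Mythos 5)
Your setup---reducing to showing $(\psi\circ\pi)^{-1}(s)\cap\widehat{M}=\{\widehat{q}\}$, the decomposition $(\psi\circ\pi)^*Q=8\widehat{\Gamma}+V$, and the classification showing that any extra preimage of $s$ on $\widehat{M}$ would have to lie on a component of $\widehat{X}_0$ contracted to $s$---is sound and parallels the paper. But the step you yourself identify as the crux is wrong: $Z$ is \emph{not} smooth in codimension one. It is singular, indeed non-normal, along all of $J$. This is forced by the very hypotheses of the proposition: $\psi$ is an embedding over $\Delta^*$ but maps $M$ two-to-one onto $J$, so near a general point of $J$ the smooth curves $Z_t$ consist of two disjoint arcs that collapse onto $J$ as $t\to 0$, and $Z$ is locally the union of two surface germs meeting along $J$ (equivalently, $Z_0=B+2J$ is non-reduced along $J$; the local model $xy^2=t^{2m}$ produced by Lemma \ref{TRIGCFSLEM000} is singular along the whole line $\{y=t=0\}$). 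Hence Serre's criterion does not apply, $Z$ is not normal, and $\psi\circ\pi:\widehat{X}\to Z$ does \emph{not} have connected fibers---over a general point of $J$ the fiber already consists of two points of $\widehat{M}$. Passing to the normalization of $Z$ does not rescue the argument: the normalization separates the two branches along $J$, so a contracted component attached to $\widehat{M}$ at a second point over $s$ and the exceptional chain attached at $\widehat{q}$ would simply lie over two different points of the normalized surface, and no contradiction with connectedness arises.

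The paper closes exactly this gap by intersection theory rather than by connectedness. Write $(\psi\circ\pi)^*Q=8\widehat{\Gamma}+\Sigma_1+\Sigma_2$, where $\supp(\Sigma_1)=E_\pi$ and $\Sigma_2$ is the sum of the components disjoint from $E_\pi$. A second preimage $\widehat{r}\ne\widehat{q}$ of $s$ on $\widehat{M}$ forces $\Sigma_2\ne 0$, hence $\Sigma_2^2<0$ since $\Sigma_2$ is a nonzero effective vertical divisor with $\widehat{M}\not\subset\supp(\Sigma_2)$; on the other hand the projection formula gives $\Sigma_2^2=\Sigma_2\cdot(\psi\circ\pi)^*Q=(\psi\circ\pi)_*\Sigma_2\cdot Q=0$, because every component of $\Sigma_2$ is either contracted or pushes forward to a multiple of $B$, and $B\cdot Q=B(B+3F)=0$. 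You should replace your normality and connectedness step by an argument of this kind; the rest of your write-up can stay as is.
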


Otherwise, there exists a point $\widehat{r}\ne \widehat{q}$ on $\widehat{M}$ such that $\psi\circ \pi(\widehat{r}) = s$. Then
\begin{equation}\label{TRIGCFSPROP003E001}
(\psi\circ \pi)^* Q = 8\widehat{\Gamma} + \Sigma_1 + \Sigma_2
\end{equation}
where $\supp(\Sigma_1) = E_\pi$ and $\Sigma_2$ consists of all components disjoint from $E_\pi$. Since $\widehat{r}\in \Sigma_2$, $\Sigma_2\ne \emptyset$. So $\Sigma_2^2 < 0$. On the other hand,
$$
\Sigma_2^2 = \Sigma_2 (8\widehat{\Gamma} + \Sigma_1 + \Sigma_2) = \Sigma_2 . (\psi\circ \pi)^* Q = (\psi\circ \pi)_* \Sigma_2 . Q = 0.
$$
This proves Claim \ref{TRIGCFSPROP003CLM001}. A similar argument proves the following claim

\begin{Claim}\label{TRIGCFSPROP003CLM002}
$Z_0\cap Q_0 = B$.
\end{Claim}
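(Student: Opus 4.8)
The plan is to run the same negativity argument used for Claim \ref{TRIGCFSPROP003CLM001}, now permitting the auxiliary point to map to an arbitrary point of $Q_0$ rather than only to $s$. First I would dispose of the easy inclusion. Since $Z_0 = B + 2J$ by \eqref{TRIGCFSPROP003E002}, we have $B \subseteq \supp(Z_0)$, and we have already established $B \subseteq Q_0$; hence $B \subseteq Z_0 \cap Q_0$, and it suffices to prove the reverse inclusion $Z_0 \cap Q_0 \subseteq B$. Suppose this fails and choose a point $r \in (Z_0 \cap Q_0) \setminus B$. Because $\supp(Z_0) = B \cup J$, necessarily $r \in J \setminus B$.

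Next I would lift $r$ to the resolution $\pi: \widehat{X} \to X$. Since $(\psi\circ\pi)(\widehat{X}_0) = Z_0$, there is a point $\widehat{r} \in \widehat{X}_0$ with $(\psi\circ\pi)(\widehat{r}) = r$. As $r \notin B = (\psi\circ\pi)(E_\pi)$ we get $\widehat{r} \notin E_\pi$; and since the section $\widehat{\Gamma}$ meets $\widehat{X}_0$ only at a point of $E_\pi$ (recall $\widehat{\Gamma} E_\pi = 1$), whose image lies in $B$, while $r \notin B$, we also get $\widehat{r} \notin \widehat{\Gamma}$. Over $\Delta^*$ the horizontal part of $(\psi\circ\pi)^* Q$ is exactly $8\widehat{\Gamma}$, which avoids $\widehat{r}$, so the support of the effective divisor $(\psi\circ\pi)^* Q$ through $\widehat{r}$ is vertical. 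Writing, as in Claim \ref{TRIGCFSPROP003CLM001},
\[
(\psi\circ\pi)^* Q = 8\widehat{\Gamma} + \Sigma_1 + \Sigma_2, \qquad \supp(\Sigma_1) = E_\pi,
\]
with $\Sigma_2$ collecting the components disjoint from $E_\pi$, we obtain $\widehat{r} \in \Sigma_2$, so $\Sigma_2 \ne 0$.

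Finally I would derive the contradiction as before. On one hand $\Sigma_2$ is a nonzero effective divisor supported on the connected fibre $\widehat{X}_0$ and does not contain $\widehat{M}$ (the curve $\widehat{M}$ maps onto $J$, and $J \not\subseteq Q$ since $J . Q_0 = 4$ is finite, so $\widehat{M} \not\subseteq \supp(\psi\circ\pi)^* Q$); being a proper subcycle of a connected fibre, $\Sigma_2^2 < 0$. On the other hand no component of $\Sigma_2$ maps onto $J$ (that would force $J \subseteq Q$), so each component is either contracted by $\psi\circ\pi$ or maps onto $B$; hence $(\psi\circ\pi)_* \Sigma_2$ is a multiple of $B$, and using $BQ = 0$,
\[
\Sigma_2^2 = \Sigma_2 \cdot (\psi\circ\pi)^* Q = (\psi\circ\pi)_* \Sigma_2 \cdot Q = 0,
\]
a contradiction. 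Thus $Z_0 \cap Q_0 = B$. The main difficulty, and the point where this argument genuinely differs from Claim \ref{TRIGCFSPROP003CLM001}, is recognizing that the extra vertical component $\Sigma_2$ attached to $\widehat{r}$ must be \emph{contracted} (or map into $B$) rather than onto $J$, so that the final intersection number still vanishes despite $J.Q_0 = 4 \neq 0$; this rests squarely on $J \not\subseteq Q$, together with the verification that $\widehat{M} \not\subseteq \Sigma_2$ which supplies the strict negativity $\Sigma_2^2 < 0$.
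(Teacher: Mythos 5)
Your argument is correct and is essentially the paper's own proof, which at this point simply says ``the same argument applies'': one lifts the putative extra intersection point $r\in (Z_0\cap Q_0)\setminus B$ to a point $\widehat{r}$ of $\widehat{X}_0$ lying on $\Sigma_2$ and reruns the negativity computation from Claim \ref{TRIGCFSPROP003CLM001}; you have usefully made explicit the step the paper leaves implicit, namely that no component of $\Sigma_2$ can dominate $J$, so that $(\psi\circ\pi)_*\Sigma_2$ is a multiple of $B$ and $(\psi\circ\pi)_*\Sigma_2\,.\,Q=0$. One small wording fix: $J\not\subset Q_0$ follows not from the ``finiteness'' of $J.Q_0=4$ (intersection numbers of divisors on a surface are always finite), but from the fact that the integral curve $J\in|B+4F|$ cannot occur as a component of the effective divisor $Q_0\in|B+3F|$.
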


Otherwise, there exists a point $r\in J\cap Q_0$ such that $r\ne s$. Let $\widehat{r}$ be a point on $\widehat{M}$ with $\psi\circ \pi(\widehat{r}) = r$. Again, we can put $(\psi\circ \pi)^* Q$ in the form of \eqref{TRIGCFSPROP003E001}. The same argument applies.

From Claim \ref{TRIGCFSPROP003CLM002}, we have
\begin{equation}\label{TRIGCFSPROP003E003}
Q_0 = B + 3G
\end{equation}
where $G$ is the curve in $|F|$ passing through $s$. Note that $G$ and $J$ meet transversely at $s$.

We can contract $E_1\cup E_2\cup ...\cup E_{m-1}$ in $\widehat{X}$ to obtain a surface $\overline{X}$ with an $A_{m-1}$ singularity. We have the diagram
\begin{equation}\label{TRIGCFSPROP003E004}
\begin{tikzcd}
\widehat{X}\ar[bend right=30]{dd}[left]{\pi} \ar{rrdd}{\psi\circ \pi}
\ar{d}{\varphi}\\
\overline{X} \ar{d}\ar{rrd}\\
X \ar[dashed]{rr}{\psi} && R
\end{tikzcd}
\end{equation}
where $\varphi: \widehat{X}\to \overline{X}$ is the contraction of $E_1\cup E_2\cup ...\cup E_{m-1}$. 

The map $\overline{X}\to R$ is regular and finite onto its image $Z$
since
$$(\psi\circ \pi)^{-1}(s) = E_1\cup E_2\cup ... \cup E_{m-1}.$$
So by Lemma \ref{TRIGCFSLEM000}, $Z$ is locally given by either
\begin{equation}\label{TRIGCFSPROP003E005}
Z \cong \big\{x y^2  = t^{2m}\big\} \subset \Delta_{xyt}^3 
\end{equation}
or
\begin{equation}\label{TRIGCFSPROP003E006}
Z\cong \big\{xy^2 = t^{2m} + t^{m+c} y\big\} \subset \Delta_{xyt}^3 \hspace{12pt} \text{for some } 0 < c < m
\end{equation}
at $s$.

From Claim \ref{TRIGCFSPROP003CLM002}, we also have
\begin{equation}\label{TRIGCFSPROP003E007}
Z.Q = d B + 8 \Gamma_Z
\end{equation}
on $R$ for some $d\in \ZZ^+$ and $\Gamma_Z = \psi\circ \pi(\widehat{\Gamma})$.

We now derive a contraction to \eqref{TRIGCFSPROP003E007}:  we can apply Lemma \ref{TRIGCFSLEM001} in the case of \eqref{TRIGCFSPROP003E005} and 
Lemma \ref{TRIGCFSLEM002} in the case of \eqref{TRIGCFSPROP003E006}.
The key is the verification of \eqref{TRIGCFSLEM001E002} and \eqref{TRIGCFSLEM002E002}, which follows from the fact that $G$ and $J$ meet transversely at $s$.

This finishes the proof of Proposition \ref{TRIGCFSPROP003}. 
\end{proof}

\section{Canonically Fibered Surfaces of Relative Genus $3$ and $4$}

\subsection{Some easy inequalities for canonically fibered surfaces}

In this section, we consider a canonically fibered surface of relative genus $3$ or $4$. We assume that $f: X\to C$ satisfies C1-C5 with $g(X_t) = 3$ or $4$ for $t\in C$ general. For such fibrations, as noted by Sun \cite[p.~ 161]{Sun1994}, in \cite{problem} Xiao asked for the value of
\begin{equation}\label{TRIGCFSE400}
\varliminf_{p_g(X)\to \infty} \dfrac{K_X^2}{p_g(X)}.
\end{equation}
For $g(X_t) = 3$, this number is between $24/5$ and $6$. For $g(X_t) = 4$, this number is at least $48/7$. Note that the general fibers of such fibrations are trigonal. For the case of genus $3$ or $4$ hyperelliptic canonical fibrations, X. Lu obtained very good bounds in \cite[Theorems 1.3, 1.5 and 1.6]{Lu:2020:Canonically:fibered:gen:type:surfaces}.

In what follows, we want to focus on the question of obtaining lower bounds for $K^2_X$ for the case of $g = 3,4$ nonhyperelliptic canonical fibrations. We start with some easy inequalities well known to the experts (cf. \cite{Sun1994}).

Suppose that
$$
N = \sum \mu_i \Gamma_i + V
$$
in \eqref{TRIGCFSE000}, where $\Gamma_i$ are the irreducible horizontal components of $N$ flat over $C$ with multiplicities $\mu_i > 0$ and where $V$ is the vertical part, so that in particular $f_* V = 0$.

We claim that
\begin{equation}\label{TRIGCFSE404}
K_X^2 \ge d \left(2g-2 + \sum \dfrac{\mu_i \gamma_i}{\mu_i+1}\right) + (\deg K_C) \sum \dfrac{\mu_i^2}{\mu_i+1}
\end{equation}
where $g = g(X_t)$ and $\gamma_i > 0$ is the degree of the map $\Gamma_i\to C$. This inequality should be well known to the experts.

Since $K_X$ is relatively nef, $K_X V \ge 0$ and hence
\begin{equation}\label{TRIGCFSE405}
\begin{aligned}
K_X^2 &= K_X(f^* D + \sum \mu_i \Gamma_i + V)\\
&\ge (2g-2) d + \sum \mu_i K_X \Gamma_i.
\end{aligned}
\end{equation}

For each $\Gamma_i$, we have
\begin{equation}\label{TRIGCFSE401}
K_X \Gamma_i + \Gamma_i^2 = 2p_a(\Gamma_i) - 2 \ge \deg K_C .
\end{equation}
On the other hand,
\begin{equation}\label{TRIGCFSE402}
K_X\Gamma_i = (f^* D + \sum \mu_j \Gamma_j + V) \Gamma_i
\ge d \gamma_i + \mu_i \Gamma_i^2.
\end{equation}
Combining \eqref{TRIGCFSE401} and \eqref{TRIGCFSE402}, we obtain
\begin{equation}\label{TRIGCFSE403}
K_X \Gamma_i \ge \dfrac{d\gamma_i}{\mu_i+1} + \left(\dfrac{\mu_i}{\mu_i+1}\right)\deg K_C \text{.}
\end{equation}
Together with \eqref{TRIGCFSE405}, we obtain \eqref{TRIGCFSE404}. Thus,
\begin{equation}\label{TRIGCFSE406}
\varliminf_{p_g(X)\to \infty} \dfrac{K_X^2}{p_g(X)} \ge 2g-2 + \sum \dfrac{\mu_i \gamma_i}{\mu_i+1}.
\end{equation}
Note that
$$
\sum \mu_i \gamma_i = 2g - 2.
$$

Obviously, the minimum of the right hand side (RHS) of \eqref{TRIGCFSE406} is achieved when the horizontal part of $N$ has one irreducible component with multiplicity $\mu_1 = 2g - 2$, i.e.,
\begin{equation}\label{TRIGCFSE407}
K_X = f^*D + (2g-2) \Gamma + V
\end{equation}
for a section $\Gamma$ of $X/C$.

\subsection{Canonically fibered surfaces of relative genus $3$}

Here we will prove Theorem \ref{TRIGCFSTHMG36} when $g=3$.
We just have to deal with the cases
\begin{equation}\label{TRIGCFSTHMG36E000}
K_X = f^*D + 4 \Gamma + V
\end{equation}
and
\begin{equation}\label{TRIGCFSTHMG36E001}
K_X = f^*D + \Gamma_1 + 3\Gamma_2 + V.
\end{equation}
For all other configurations of $N$, we already have
\begin{equation}\label{TRIGCFSTHMG36E002}
K_X^2 \ge \dfrac{16}3 d + \dfrac{8}3 \deg K_C
\end{equation}
by \eqref{TRIGCFSE404}.

Let us first treat the case \eqref{TRIGCFSTHMG36E000}.
We have the following statement, similar to Theorem \ref{TRIGCFSTHMPCM}.

\begin{Proposition}\label{TRIGCFSPROP001}
Let $f: X\to C$ be a canonically fibered surface satisfying C1-C5 and \eqref{TRIGCFSTHMG36E000} with $\Gamma$ a section of $f$
and let $\tau$ be the rational map 
$$
\begin{tikzcd}
X\ar[dashed]{r}{\tau} & W = \PP (f_* \OO_X(4\Gamma))^\vee.
\end{tikzcd}
$$
If $X_p$ is not hyperelliptic for $p\in C$ general, then
\begin{itemize}
\item $\tau$ is regular;
\item $\tau: X_p\to W_p$ is a closed embedding for $p\in C$ general; 
\item $Y_p$ is an integral quartic curve in $W_p\cong \PP^2$ for $Y = \tau(X)$ and all $p\in C$.
\end{itemize}
\end{Proposition}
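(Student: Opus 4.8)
The plan is to run the argument of Theorem \ref{TRIGCFSTHMPCM} in the much simpler ambient $W_p \cong \PP^2$, where the only relevant structure is that of a plane quartic: there are no scroll or cone fibers to contend with, so the entire difficulty is concentrated in excluding a single degenerate configuration. First I would record the fiberwise picture. Since $\Gamma$ is a section of $f$ and $V$ is vertical, restricting \eqref{TRIGCFSTHMG36E000} to a general fiber gives $\OO_{X_p}(4\Gamma)\big|_{X_p} \iso \OO_{X_p}(4q) \iso K_{X_p}$, where $q = \Gamma \cap X_p$ and $\deg K_{X_p} = 2g-2 = 4$. As $X_p$ is non-hyperelliptic of genus $3$, its canonical map is a closed embedding onto a smooth plane quartic. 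Hence $f_*\OO_X(4\Gamma)$ has rank $3$, so $W_p \cong \PP^2$, and $\tau|_{X_p}$ is the canonical embedding with image a smooth integral quartic for $p \in C$ general. This settles the second bullet and the general-fiber part of the third.

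For the remaining points I would argue fiber by fiber, exactly as in the regularity portion of the proof of Theorem \ref{TRIGCFSTHMPCM}. Fix $p \in C$, let $B_p$ be the unique component of $X_p$ meeting $\Gamma$, and set $J_p = \tau(B_p)$. By the injectivity of $f_*\OO_X(4\Gamma)\otimes\OO_p \hookrightarrow H^0(X_p,\OO_{X_p}(4\Gamma))$, the irreducible curve $J_p$ is linearly non-degenerate in $W_p \cong \PP^2$, so $\deg J_p \ge 2$. Since $\OO_W(1)$ restricts to $\OO_X(4\Gamma)$ and $4\Gamma\cdot X_p = 4$, the projection formula gives $\deg \tau_* X_p = 4$, so $\tau$ maps $B_p$ either birationally or $2$-to-$1$ onto $J_p$. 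In the birational case Lemma \ref{TRIGCFSLEMGENUS} yields $p_a(J_p)\ge g(Y_t) = 3$; since a plane curve of degree $d$ has arithmetic genus $(d-1)(d-2)/2$, this forces $\deg J_p = 4$, whence $J_p = Y_p$ is an integral quartic, all other components of $X_p$ are contracted, and $\tau$ is regular along $X_p$.

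It remains to exclude the $2$-to-$1$ case, which is the genus-$3$ analogue of the reducedness problem for type A3 fibers. Here $\deg J_p = 2$, so $J_p$ is an irreducible and hence smooth conic, $Y_p$ has no other component, $\tau$ is regular along $X_p$, and $Y_p = 2J_p$ is a nonreduced double conic. Let $\Lambda \in |\OO_W(1)|$ be the unique section with $\tau^*\Lambda = 4\Gamma$. Then $\Lambda$ meets $J_p$ only at $s = \tau(q)$, so $(\Lambda . J_p)_s = \deg J_p = 2$, and $\tau \colon B_p \to J_p$ is totally ramified at $q$. Applying Lemma \ref{TRIGCFSLEMDOUBLE} with $R = W$ in a neighborhood of $X_p$, the equality $\tfrac12(\tau^*\Lambda . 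X_0)_q + 2 = (\Lambda . J_p)_s + 2 = 4 = \operatorname{mult}_\Gamma \tau^*\Lambda$ realizes precisely the forbidden inequality \eqref{TRIGCFSLEMDOUBLEE003}, a contradiction. Thus the $2$-to-$1$ case never occurs: $\tau$ maps every $B_p$ birationally, $\tau$ is everywhere regular, and $Y_p$ is an integral quartic for all $p \in C$.

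The hard step is the third paragraph. One must verify that $\tau$ is genuinely regular along $X_p$ in the would-be double-conic case and that $\Lambda$ is smooth over the disk near $q$, so that the hypotheses of Lemma \ref{TRIGCFSLEMDOUBLE} are actually met; these are exactly the local phenomena that Lemma \ref{TRIGCFSLEMDOUBLE} is designed to control, and everything else in the proposition is a degree-and-genus bookkeeping that the passage to $\PP^2$ makes routine.
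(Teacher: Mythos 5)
Your proposal is correct and follows essentially the same route as the paper: establish the canonical embedding on general fibers, run the $B_p$/$J_p$ dichotomy (birational versus $2$-to-$1$) using Lemma \ref{TRIGCFSLEMGENUS} to force $\deg J_p = 4$ in the birational case, and invoke Lemma \ref{TRIGCFSLEMDOUBLE} with $\operatorname{mult}_\Gamma \tau^*\Lambda = 4 \ge \tfrac12(\tau^*\Lambda.X_0)_q + 2$ to kill the double-conic case. The hypotheses of Lemma \ref{TRIGCFSLEMDOUBLE} that you flag at the end are indeed immediate here (a line in $W_p\cong\PP^2$ is automatically smooth, unlike the hyperplane sections through the vertex of a cone in the genus-$5$ case), so nothing further is needed.
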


\begin{proof}
Since $f$ is not hyperelliptic, $\tau$ is a closed embedding over $p\in C$ general.

For each $p\in C$, let $B_p$ be the unique irreducible component of $X_p$ such that $B_p\cap \Gamma \ne \emptyset$.
Then $J = \tau(B_p)$ is a linearly non-degenerate curve in $W_p\cong \PP^2$. And since $\deg Y_p \le 4$, $\tau$ maps $B_p$ either birationally or $2$-to-$1$ onto $J$.

If $\tau$ maps $B_p$ birationally onto $J$, then $p_a(J)\ge 3$ by
Lemma \ref{TRIGCFSLEMGENUS}. And since $\deg J\le 4$, we conclude that $J$ is an integral quartic curve on $W_p$. Hence $J$ is the only component of $Y_p$, $Y_p$ is an integral quartic curve and $\tau$ is regular along $X_p$.

Suppose that $\tau$ maps $B_p$ $2$-to-$1$ onto $J$. Then $J$ is a smooth conic curve on $W_p$. Since $\deg Y_p\le 4$ and $Y_p$ contains $J$ with multiplicity $\ge 2$, we conclude that $J$ is the only component of $Y_p$ and $\tau$ is regular along $X_p$. In conclusion, $\tau$ is regular. 

In addition, by Lemma \ref{TRIGCFSLEMDOUBLE}, the existence of $\Lambda\in |\OO_W(1)|$ with $\tau^* \Lambda = 4\Gamma$ implies that $\tau: B_p\to J$ cannot be $2$-to-$1$. So $Y_p$ is an integral quartic curve for all $p\in C$. 
\end{proof}

\begin{proof}[Proof of Theorem \ref{TRIGCFSTHMG36} in the case \eqref{TRIGCFSTHMG36E000}]
By Proposition \ref{TRIGCFSPROP001}, $Y$ is smooth in codimension one. And since $Y$ is a divisor of $W$, it is Cohen-Macaulay.
Then Serre's criterion tells us that $Y$ is normal. And since $\tau: X\to Y$ is finite along $\Gamma$, it is a local isomorphism along $\Gamma$.
It follows that $Y_p$ is smooth at $Y_p\cap \Lambda$ for all $p\in C$, where $\Lambda$ is the unique section in $|\OO_W(1)| \cong |\OO_X(4\Gamma)|$.

Let us consider the rational map
$$
\begin{tikzcd}
X\ar[dashed]{r}{\alpha} & S = \PP (f_* \OO_X(3\Gamma))^\vee.
\end{tikzcd}
$$
Clearly, $\alpha$ factors through $\tau$ with the diagram
$$
\begin{tikzcd}
X\ar[dashed]{dr}[below]{\alpha} \ar{r}{\tau} & W = \PP (f_* \OO_X(4\Gamma))^\vee \ar[dashed]{d}{\beta}\\ 
& S = \PP (f_* \OO_X(3\Gamma))^\vee.
\end{tikzcd}
$$
Geometrically, $\beta$ is the linear projection from the point $Y_p\cap \Lambda$ over every point $p\in C$. And since $Y_p$ is smooth at the point $Y_p\cap \Lambda$ for all $p\in C$, $\beta$ is regular along $Y$. It follows that $\alpha$ is regular.

Since $S$ is a ruled surface, $\tau^* \Gamma_S = 3\Gamma$ for $\Gamma_S = \tau(\Gamma)$ and $\deg \tau = 3$,
\begin{equation}\label{TRIGCFSTHMG36E004}
\Gamma^2 = \dfrac{1}3 \Gamma_S^2 \le \dfrac{1}{3}\left(-d + \dfrac{\deg K_C}2\right).
\end{equation}
Therefore,
\begin{equation}\label{TRIGCFSTHMG36E003}
\begin{aligned}
K_X^2 &= K_X(f^* D + 4\Gamma + V) \ge K_X(f^* D + 4\Gamma)\\
&= 4d + 4K_X \Gamma = 4d + 4\deg K_C - 4\Gamma^2\\
&\ge 4d + 4\deg K_C - \dfrac{4}{3}\left(-d + \dfrac{\deg K_C}2\right)\\
&= \dfrac{16}3 d + \dfrac{10}3 \deg K_C.
\end{aligned}
\end{equation}
\end{proof}

Next, let us treat the case \eqref{TRIGCFSTHMG36E001}.

\begin{Proposition}\label{TRIGCFSPROP004}
Let $f: X\to C$ be a canonically fibered surface satisfying C1-C5 
without the hypothesis that $K_X$ is relatively nef over $C$.
Suppose that \eqref{TRIGCFSTHMG36E001} holds with $\Gamma_1$ 
and $\Gamma_2$ two disjoint sections of $f$. If $X_p$ is not hyperelliptic for $p\in C$ general, then the rational map
$$
\begin{tikzcd}
X\ar[dashed]{r}{\alpha} & S = \PP (f_* \OO_{X}(3 \Gamma_2))^\vee.
\end{tikzcd}
$$
is regular.
\end{Proposition}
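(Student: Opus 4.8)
The plan is to imitate the treatment of case \eqref{TRIGCFSTHMG36E000}: realize $\alpha$ as the composite of the relative canonical map with a linear projection, and then reduce the regularity of $\alpha$ to a smoothness statement for the canonical image along the section $\Gamma_1$. Since $X_p$ is non-hyperelliptic of genus $3$ for general $p$, its canonical model is a smooth plane quartic, and adjunction applied to \eqref{TRIGCFSTHMG36E001} gives $K_{X_p} = q_1 + 3 q_2$ on a general fibre, where $q_i = \Gamma_i \cap X_p$. First I would introduce the generically rank-$3$ sheaf $\calF = f_* \OO_X(\Gamma_1 + 3\Gamma_2)$ together with the inclusion $f_* \OO_X(3\Gamma_2) \hookrightarrow \calF$ obtained by multiplying with the section cutting out $\Gamma_1$; on a general fibre this is the inclusion $H^0(3 q_2) = H^0(K_{X_p} - q_1) \hookrightarrow H^0(K_{X_p})$ of the canonical sections vanishing at $q_1$. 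Consequently the relative canonical map $\tau \colon X \dashrightarrow W = \PP(\calF)^\vee$ has $W_p \cong \PP^2$, and $\alpha$ is the composite $\beta \circ \tau$, where $\beta \colon W \dashrightarrow S$ is the relative linear projection away from the section $\tau(\Gamma_1)$.

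The second step is to prove the analogue of Proposition \ref{TRIGCFSPROP001} for $\tau$: that $\tau$ is regular and that $Y_p := \tau(X_p)$ is an integral plane quartic for every $p \in C$. Letting $B_p$ be the component of $X_p$ meeting the two sections, its image $J_p = \tau(B_p)$ is linearly non-degenerate in $W_p \cong \PP^2$ and has degree at most $4$, so $\tau$ maps $B_p$ either birationally or $2$-to-$1$ onto $J_p$. In the birational case Lemma \ref{TRIGCFSLEMGENUS} gives $p_a(J_p) \ge 3$; since a plane curve of degree $\le 4$ can have arithmetic genus $3$ only when it is an integral quartic, this forces $J_p = Y_p$ to be an integral quartic and shows $\tau$ is regular along $X_p$. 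The $2$-to-$1$ case, in which $J_p$ would be a smooth conic, is excluded by Lemma \ref{TRIGCFSLEMDOUBLE} applied to the canonical section $\Lambda \in |\OO_W(1)|$ with $\tau^* \Lambda = \Gamma_1 + 3 \Gamma_2$. Here the disjointness of $\Gamma_1$ and $\Gamma_2$ is essential: it forces $\tau^* \Lambda = 3\Gamma_2$ in a neighbourhood of $q_2$, so that $(\tau^* \Lambda . X_0)_{q_2} = 3$ is odd, whereas a degree-$2$ map totally ramified at $q_2$ would make this local intersection number even—this is precisely the obstruction quantified in \eqref{TRIGCFSLEMDOUBLEE003}.

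With $Y_p$ an integral quartic for all $p$, I would then argue that $Y$ is normal, exactly as in the genus-$3$ treatment of case \eqref{TRIGCFSTHMG36E000}: $Y$ is a divisor in the $\PP^2$-bundle $W$, hence Cohen-Macaulay, and it is smooth in codimension one because its general fibre is a smooth quartic, so Serre's criterion applies. Since $\tau \colon X \to Y$ is finite along the sections, it is a local isomorphism there, and therefore $Y_p$ is smooth at $\tau(q_1)$ for every $p$. As $\tau(q_1)$ is exactly the centre $\tau(\Gamma_1) \cap Y_p$ of the projection $\beta$, the projection $\beta$ is regular along $Y$, and hence $\alpha = \beta \circ \tau$ is regular everywhere. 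I would emphasize that this chain of reasoning never invokes relative nefness of $K_X$: only the decomposition \eqref{TRIGCFSTHMG36E001}, the disjointness of the two sections, and the non-hyperellipticity of the general fibre enter, which is why the nefness hypothesis can be dropped.

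The main obstacle is the second step, and specifically the exclusion of the double-conic degeneration of $Y_p$ at special fibres. This is the one point where the local analysis of Lemma \ref{TRIGCFSLEMDOUBLE} is genuinely needed rather than a soft genus estimate, and the delicate issues are to verify that the canonical section $\Lambda$ with $\tau^* \Lambda = \Gamma_1 + 3\Gamma_2$ behaves as claimed on every fibre and that the local intersection computation at $q_2$ is insensitive to the possible failure of $\tau$ to be regular there (so that one must resolve the indeterminacy and apply the lemma to the resulting morphism). I also expect some care to be needed in checking that $\tau$ is finite, and not merely quasi-finite, along $\Gamma_1$, so that the local-isomorphism argument giving smoothness of $Y_p$ at the projection centre holds uniformly in $p$.
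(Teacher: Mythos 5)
There is a genuine gap. Your entire argument presumes that a single irreducible component of each fibre $X_p$ meets both sections: you speak only of ``the component $B_p$ of $X_p$ meeting the two sections.'' But $\Gamma_1$ and $\Gamma_2$ are disjoint, and on a reducible special fibre they may well meet \emph{different} components $A_p\ne B_p$. This is exactly the situation to which the paper devotes most of its proof, and it cannot be absorbed into your scheme: when $\tau$ contracts $A_p$ (which can happen, since $\tau$ is only a rational map and need not be regular at $q_1=A_p\cap\Gamma_1$), the fibre $Y_p$ need not be an integral quartic at all --- the paper's analysis produces the case $Y_p=L_1+J$ with $L_1$ a line and $J$ an integral cubic. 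So your intermediate claim, that $Y_p$ is an integral plane quartic for every $p$, is false in general, and the subsequent steps collapse with it: $\tau$ is not finite (indeed not even regular) along $\Gamma_1$ over such a $p$, so the ``local isomorphism along the section'' argument does not give smoothness of $Y_p$ at the projection centre, and in the line-plus-cubic case the regularity of $\alpha$ along $X_p$ has to be established by a different mechanism (the paper shows the projection centre lies on $L_1$ but off $J$, so that $\beta\circ\tau$ restricts to a finite degree-$3$ map $B_p\to S_p$). The case $A_p\ne B_p$ also contains a further delicate subcase ($\tau$ contracts $A_p$ and $J$ is a conic) that the paper excludes only via a normalization and arithmetic-genus computation; nothing in your outline addresses it.

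On the parts you do treat, your route agrees with the paper's: the factorization $\alpha=\beta\circ\tau$ through the relative canonical image, Lemma \ref{TRIGCFSLEMGENUS} to force $p_a(J_p)\ge 3$ in the birational case, and Serre's criterion for normality of $Y$. For the exclusion of the $2$-to-$1$ degeneration when $A_p=B_p$, your parity observation --- that $(\tau^*\Lambda.X_p)_{q_2}=3$ is odd while a double cover would produce an even local multiplicity over $\tau(q_2)$ --- is essentially the paper's argument (which compares $\tau^*\Lambda.X_p=q_1+3q_2$ with the possible pullbacks of $\Lambda\cap J$ under a degree-$2$ map) and does not really need Lemma \ref{TRIGCFSLEMDOUBLE}; note that to invoke that lemma literally you would first have to know the double cover is totally ramified at $q_2$. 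But none of this repairs the missing $A_p\ne B_p$ analysis, which is the substantive content of the proposition.
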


\begin{proof}
Let $\tau$ be the rational map 
$$
\begin{tikzcd}
X\ar[dashed]{r}{\tau} & W = \PP (f_* \OO_X(\Gamma_1 + 3\Gamma_2))^\vee
\end{tikzcd}
$$
and let $Y = \tau(X)$ be the proper transform of $X$.
Clearly, $\alpha$ factors through $\tau$ with the diagram
$$
\begin{tikzcd}
X\ar[dashed]{dr}[below]{\alpha} \ar[dashed]{r}{\tau} & W = \PP (f_* \OO_{X}(\Gamma_1 + 3\Gamma_2))^\vee \ar[dashed]{d}{\beta}\\
& S = \PP (f_* \OO_{X}(3\Gamma_2))^\vee.
\end{tikzcd}
$$
Geometrically, $\beta$ is the linear projection
$$
\begin{tikzcd}
W_p \cong \PP^2 \ar[dashed]{r} & S_p \cong \PP^1
\end{tikzcd}
$$
from the point $Y_p\cap \tau(\Gamma_1)$ over every point $p\in C$. 

Since $X_p$ is not hyperelliptic for $p\in C$ general and $\tau: X_p\to W_p$ is the canonical map of $X_p$, it is a closed embedding.
So $Y_p$ is a smooth quartic curve in $W_p\cong \PP^2$. In particular, it is smooth
at the point $Y_p\cap \tau(\Gamma_1)$. So $h^0(\OO_{X_p}(3\Gamma_2)) = 2$ and $S$ is a ruled surface over $C$.

Let $A_p$ and $B_p$ be the components of $X_p$ such that $A_p\cap \Gamma_1\ne\emptyset$ and $B_p\cap \Gamma_2\ne\emptyset$, respectively. If $A_p = B_p$, then $J = \tau(B_p)$ is a linearly non-degenerate curve in $W_p\cong \PP^2$. So $\deg J \ge 2$ and $\tau$ maps $B_p$ either birationally or $2$-to-$1$ onto $J$.

If $\tau$ maps birationally onto $J$, then $p_a(J) \ge 3$ by Lemma \ref{TRIGCFSLEMGENUS}. Hence $\deg J = 4$ and $Y_p = J$. So $\tau$ is regular along $X_p$ and $Y_p$ is reduced. By Serre's criterion, $Y$ is normal along $Y_p$. Hence
$$\tau^{-1}(Y_p\cap \tau(\Gamma_1)) = X_p\cap \Gamma_1$$
since $\tau$ has connected fibers over $Y_p$. So $\tau$ is finite over $Y_p\cap \tau(\Gamma_1)$ and hence a local isomorphism. Consequently, $Y_p$ is smooth at $Y_p\cap \tau(\Gamma_1)$ and the linear projection $\beta$ is regular along $Y_p$. The regularity of $\alpha$ along $X_p$ then follows.

Suppose that $\tau$ maps $2$-to-$1$ onto $J$. Then $J$ is a smooth conic curve and $Y_p = 2J$. So $\tau$ is regular along $X_p$. Let $\Lambda\in |\OO_W(1)| \cong |\OO_X(\Gamma_1 + 3\Gamma_2)|$ be the unique section. Since $\Lambda$ and $J$ meet properly, we have
\begin{equation}\label{TRIGCFSTHMG36E011}
\tau^* \Lambda = \Gamma_1 + 3\Gamma_2
\end{equation}
in an open neighborhood of $X_p$. Therefore,
\begin{equation}\label{TRIGCFSTHMG36E012}
\tau^*\Lambda . X_p = q_1 + 3 q_2
\end{equation}
for $q_1 = X_p\cap \Gamma_1$ and $q_2 = X_p\cap \Gamma_2$. Since $\Gamma_1$ and $\Gamma_2$ are disjoint, $q_1\ne q_2$.

If $\Lambda$ meets $J$ at two distinct points, either $\tau^{-1}(\Lambda) \cap X_p$ consists of more than two points or $\tau^* \Lambda . X_p = 2q_1 + 2q_2$. Otherwise, $\Lambda$ is tangent to $J$ at a unique point. Then either $\tau^{-1}(\Lambda) \cap X_p$ consists of a unique point or $\tau^* \Lambda . X_p = 2q_1 + 2q_2$. In either case, \eqref{TRIGCFSTHMG36E012} cannot hold. Therefore, $\tau$ cannot map $B_p$ $2$-to-$1$ onto $J$. This proves the proposition when $A_p = B_p$.

Suppose that $A_p \ne B_p$. If $\tau$ is regular at $q_1 = A_p\cap \Gamma_1$, then $I = \tau(A_p)$ is a line on $W_p\cong \PP^2$. Let $J = \tau(B_p)$ be the proper transform of $B_p$. Since $I\cup J$ is linearly non-degenerate, $J \ne \emptyset, I$. Let $M$ be the scheme-theoretic proper transform of $X_p$ under $\tau$, as defined in Lemma \ref{TRIGCFSLEMGENUS}. Then $p_a(M)\ge 3$. Hence $\deg M = 4$, $Y_p = M$ and $\tau$ is regular. If $Y_p$ is smooth at $\tau(q_1)$, then the linear projection $\beta$ is regular along $Y_p$ and we are done. Otherwise, $\tau(q_1) = I\cap J$. Then $\tau^{-1}(\tau(q_1))$ contains at least two connected components: one is $q_1$ and the other is a union $\Sigma$ of components of $X_p$ such that $\tau(\Sigma) = \tau(q_1)$, $\Sigma\cap A_p\ne \emptyset$ and $\Sigma\cap B_p \ne \emptyset$.
The map $\tau: A_p \to I$ is given by a linear system of $\OO_{A_p}(q_1)$. So $A_p\cong \PP^1$ and $\tau: A_p\to I$ is an isomorphism. But $\tau$ maps two distinct points $q_1$ and $\Sigma\cap A_p$ to the same point, which is a contradiction. This proves the proposition when $A_p\ne B_p$ and $\tau$ is regular at $q_1$.

Suppose that $\tau$ is not regular at $q_1$. Then $\tau$ contracts $A_p$ and $J = \tau(B_p)$ is linearly non-degenerate. So $2\le \deg J \le 3$ and $J$ is either a conic or a cubic.

Suppose that $J$ is a conic. Then $\tau$ is not regular at $q_2 = B_p\cap \Gamma_2$. The map $\tau: B_p\to J$ is given by a linear system of $\OO_{B_p}(2q_2)$. Hence $B_p\cong \PP^1$ and $\tau: B_p\to J$ is an isomorphism. Let
\begin{equation}\label{TRIGCFSTHMG36E013}
\begin{tikzcd}
\widehat{X} \ar{rd}{\widehat{\tau}} \ar{d}{g}\\
X \ar[dashed]{r}{\tau} & W
\end{tikzcd}
\end{equation}
be the minimal resolution of the rational map $\tau$. Let $E_i = g^{-1}(q_i)$ for $i=1,2$. Since $\widehat{\tau}_* E_i \ne 0$ and $\deg Y_p = 4$, $Y_p = J + L_1 + L_2$ for two lines $L_i = \widehat{\tau}_* E_i$. Clearly, $\widehat{\tau}^{-1}(L_1\cup L_2)$ consists of two connected components: one is a union $\Sigma$ of components of $\widehat{X}_p$ such that $\Sigma\supset E_1$ and the other is $E_2$. If $L_1\ne L_2$, $Y_p$ is reduced and hence $Y$ is normal along $Y_p$. Then $\widehat{\tau}$ has connected fibers over $Y_p$. But $\widehat{\tau}^{-1}(L_1\cup L_2)$ is not connected, which is a contradiction. So $L_1 = L_2$.
 
Let $\widehat{B}_p\subset \widehat{X}$ be the proper transform of $B_p$ under $g$. Since $\Sigma\cap E_1 = \emptyset$, $s_1 = \Sigma\cap \widehat{B}_p$ and $s_2 = E_2\cap \widehat{B}_p$ are two distinct points. And since $\widehat{\tau}:\widehat{B}_p\to J$ is an isomorphism, $s_1$ and $s_2$ map to two distinct points by $\widehat{\tau}$. So $L_i$ and $J$ meet at two distinct points $\widehat{\tau}(s_i)$ for $i=1,2$. Let $\nu: \widehat{Y} \to W$ be the normalization of $Y$. We have the diagram
$$
\begin{tikzcd}
\widehat{X} \ar{rd}{\widehat{\tau}} \ar{d}{g} \ar{r}{\phi} & \widehat{Y} \ar{d}{\nu}\\
X \ar[dashed]{r}{\tau} & W
\end{tikzcd}
$$
Let $V\subset W$ be an analytic open neighborhood of $\widehat{\tau}(s_1)$. Since $\widehat{\tau}^{-1}(\widehat{\tau}(s_1))$ has two connected components, $\widehat{\tau}^{-1}(V)$ has two connected components, correspondingly. And since $\phi: \widehat{X}\to \widehat{Y}$ has connected fibers, $\nu^{-1}(V) = U_1\sqcup U_2$ also has two connected components with
$$
U_1 \cap \widehat{Y}_p \cong V\cap (L_1\cup J) \hspace{12pt}\text{and}\hspace{12pt}
U_2 \cap \widehat{Y}_p \cong V\cap L_2
$$
In particular, $\widehat{Y}_p$ has normal crossings in $\nu^{-1}(V)$. The same holds over the point $\widehat{\tau}(s_2)$ and also over all points of $Y_p$. In conclusion, $\widehat{Y}_p$ has normal crossings and
$$
\widehat{Y}_p = \widehat{L}_1 \cup \widehat{J} \cup \widehat{L}_2
$$
where $\nu$ maps $\widehat{L}_1$, $\widehat{J}$ and $\widehat{L}_2$ isomorphically onto $L_1, J$ and $L_2$, respectively, $\widehat{L}_1 \cap \widehat{L}_2 = \emptyset$, $\widehat{L}_1 \cap \widehat{J} = \{1\text{ point}\}$ and $\widehat{L}_2 \cap \widehat{J} = \{1\text{ point}\}$. Then $p_a(\widehat{Y}_p) = 0$. But $p_a(\widehat{Y}_t) = 3$ for $t\in C$ general, which is a contradiction. This proves the proposition when $A_p \ne B_p$ and $\tau$ is regular at neither of $q_1$ and $q_2$.

Suppose that $J$ is a cubic. Then $\tau$ is regular at $q_2 = B_p\cap \Gamma_2$. Let
$g: \widehat{X}\to X$ be the minimal resolution of $\tau$ given by the diagram \ref{TRIGCFSTHMG36E013}. Then $Y_p = L_1 + J$ for a line $L_1 = \widehat{\tau}_* E_1$. Since $Y_p$ is reduced, $Y$ is normal along $Y_p$. So $\widehat{\tau}$ has connected fibers. In particular, $\widehat{\tau}^{-1}(\tau(q_2))$ contains a unique point. Hence
$\tau(q_2') \ne \tau(q_2)$ for all $q_2' \ne q_2\in B_p$. So $\Lambda . J = 3 \tau(q_2)$.

Also $\tau(q_2)\not\in L_1$; otherwise, $\widehat{\tau}^{-1}(\tau(q_2))$ will have at least two connected components. So the two lines $L_1$ and $\Lambda_p$ are distinct and
the intersection $L_1\cap \Lambda = Y_p\cap \tau(\Gamma_1)$ does not lie on the cubic curve $J$. Therefore, the finite map $\alpha: B_p \to S_p\cong \PP^1$ has degree $3$ as it is given by
the diagram
$$
\begin{tikzcd}
B_p \ar{r}{\tau} \ar{rd}{\alpha} & J\ar{d}{\beta}\\
& S_p
\end{tikzcd}
$$
where $\beta$ is the linear projection $W_p\dasharrow S_p$ from a point not lying on $J$. Consequently, $\alpha$ is regular on $X_p$. 
\end{proof}

\begin{proof}[Proof of Theorem \ref{TRIGCFSTHMG36} in the case \eqref{TRIGCFSTHMG36E001}]
Suppose that $\Gamma_1 \Gamma_2 = \delta$. Then there exists a birational morphism $\pi: \widehat{X}\to X$, consisting of $\delta$ blowups at points, such that the proper transforms $\widehat{\Gamma}_i$ of $\Gamma_i$ are disjoint for $i=1,2$. Note that
\begin{equation}\label{TRIGCFSTHMG36E005}
\Gamma_i^2 = \widehat{\Gamma}_i^2 + \delta
\end{equation}
for $i=1,2$.

By Proposition \ref{TRIGCFSPROP004}, the rational map
$$
\begin{tikzcd}
\widehat{X}\ar[dashed]{r}{\alpha} & \widehat{S} = \PP (f_* \OO_{\widehat{X}}(3\widehat{\Gamma}_2))^\vee.
\end{tikzcd}
$$
is regular. From the regularity of $\alpha$, we obtain
$$
\widehat{\Gamma}_2^2 \le \dfrac{1}{3}\left(-d + \dfrac{\deg K_C}2\right)
$$
similar to \eqref{TRIGCFSTHMG36E004}. Together with \eqref{TRIGCFSTHMG36E005}, we have
\begin{equation}\label{TRIGCFSTHMG36E006}
\Gamma_2^2 \le \dfrac{1}{3}\left(-d + \dfrac{\deg K_C}2\right) + \delta.
\end{equation}

For $K_X\Gamma_1$, we have
$$
\begin{aligned}
K_X\Gamma_1 &= (f^* D + \Gamma_1 + 3\Gamma_2 + V)\Gamma_1\\
&\ge (f^* D + \Gamma_1 + 3\Gamma_2)\Gamma_1 = d + \Gamma_1^2 + 3\delta\\
&= d + (\deg K_C - K_X\Gamma_1) +3\delta.
\end{aligned}
$$
Therefore,
\begin{equation}\label{TRIGCFSTHMG36E007}
K_X\Gamma_1 \ge \dfrac{d}2 + \dfrac{1}2\deg K_C + \dfrac{3}2\delta.
\end{equation}

For $K_X\Gamma_2$, we have
$$
\begin{aligned}
K_X\Gamma_2 &= (f^* D + \Gamma_1 + 3\Gamma_2 + V)\Gamma_2\\
&\ge (f^* D + \Gamma_1 + 3\Gamma_2)\Gamma_2 = d + \delta + 3\Gamma_2^2\\
&= d + \delta + 3(\deg K_C - K_X\Gamma_2).
\end{aligned}
$$
Therefore,
\begin{equation}\label{TRIGCFSTHMG36E008}
K_X\Gamma_2 \ge \dfrac{d}4 + \dfrac{3}4\deg K_C + \dfrac{\delta}4.
\end{equation}
Also from \eqref{TRIGCFSTHMG36E006}
\begin{equation}\label{TRIGCFSTHMG36E009}
K_X\Gamma_2 = \deg K_C - \Gamma_2^2 \ge \dfrac{d}3 + \dfrac{5}{6} \deg K_C - \delta.
\end{equation}

Combing \eqref{TRIGCFSTHMG36E007}-\eqref{TRIGCFSTHMG36E009}, we obtain
\begin{equation}\label{TRIGCFSTHMG36E010}
\begin{aligned}
K_X^2 &= K_X(f^* D + \Gamma_1 + 3\Gamma_2 + V)\\
&\ge K_X(f^* D + \Gamma_1 + 3\Gamma_2) = 4d + K_X\Gamma_1 + 3K_X\Gamma_2\\
&\ge 4d + \left(\dfrac{d}2 + \dfrac{1}2\deg K_C + \dfrac{3}2\delta\right) + \dfrac{6}5\left(\dfrac{d}4 + \dfrac{3}4\deg K_C + \dfrac{\delta}4\right)\\
&\hspace{24pt} + \dfrac{9}5 \left(\dfrac{d}3 + \dfrac{5}{6} \deg K_C - \delta\right)\\
&= \dfrac{27}5 d + \dfrac{29}{10} \deg K_C.
\end{aligned}
\end{equation}
\end{proof}

So we have three lower bounds for $K_X^2$:
In the case \eqref{TRIGCFSTHMG36E000}, we have \eqref{TRIGCFSTHMG36E003}; in the case \eqref{TRIGCFSTHMG36E001}, we have \eqref{TRIGCFSTHMG36E010}; otherwise, we have
\eqref{TRIGCFSTHMG36E002}. In conclusion, we obtain \eqref{TRIGCFSE408} for $g=3$.

\subsection{Canonically fibered surfaces of relative genus $4$}

Here we will prove Theorem \ref{TRIGCFSTHMG36} when $g=4$.
We just have to treat the case
\begin{equation}\label{TRIGCFSTHMG36E020}
K_X = f^*D + 6 \Gamma + V.
\end{equation}
For all other configurations of $N$, we already have
\begin{equation}\label{TRIGCFSTHMG36E021}
K_X^2 \ge \dfrac{22}3 d + \dfrac{14}3 \deg K_C
\end{equation}
by \eqref{TRIGCFSE404}.

\begin{Proposition}\label{TRIGCFSPROP002}
Let $f: X\to C$ be a canonically fibered surface satisfying C1-C5 and \eqref{TRIGCFSTHMG36E020} with $\Gamma$ a section of $f$
and let $\tau$ be the rational map 
$$
\begin{tikzcd}
X\ar[dashed]{r}{\tau} & W = \PP (f_* \OO_X(6\Gamma))^\vee.
\end{tikzcd}
$$
If $X_p$ is not hyperelliptic for $p\in C$ general, then
\begin{itemize}
\item $\tau$ is regular;
\item $\tau: X_p \to W_p$ is a closed embedding for $p\in C$ general;
\item $Y = \tau(X)\subset Q$ for a hypersurface $Q\subset W$ such that $Q_p$ is an integral quadratic surface in $W_p\cong \PP^3$ for all $p\in C$;
\item $Y_p$ is an integral curve in $|\OO_W(3)\otimes \OO_{Q_p}|$ for all $p\in C$.
\end{itemize}
\end{Proposition}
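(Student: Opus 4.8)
The plan is to transpose the proof of Theorem~\ref{TRIGCFSTHMPCM} to the genus~$4$ setting, where the net of quadrics is replaced by a single quadric. Since $X_p$ is a non-hyperelliptic curve of genus $4$ for general $p$, its canonical map $\tau\colon X_p\to W_p\cong\PP^3$ is a closed embedding onto a curve of degree $6$ and arithmetic genus $4$, which gives the second bullet immediately. To produce $Q$, I would use the exact sequence
\begin{equation*}
0\longrightarrow \CQ\longrightarrow \operatorname{Sym}^2 f_*\LL\longrightarrow f_*(\LL^{\otimes 2}),\qquad \LL=\OO_X(6\Gamma),
\end{equation*}
exactly as in \eqref{TRIGCFSTHMPCME518}. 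Because the multiplication map $\operatorname{Sym}^2 H^0(K_{X_t})\to H^0(2K_{X_t})$ of a non-hyperelliptic genus $4$ curve is surjective with one-dimensional kernel, $\CQ$ is a line bundle on $C$; for each $p$ its fibre $\CQ_p\subset H^0(I_{Y_p}(2))$ singles out the unique quadric $Q_p\supset Y_p$, and these fit together into a divisor $Q\subset W$, flat over $C$, with $Y\subset Q$.

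Next I would prove regularity of $\tau$ and integrality of $Q_p$ simultaneously, fibre by fibre, following the regularity part of Theorem~\ref{TRIGCFSTHMPCM}. For each $p$ let $B_p$ be the component of $X_p$ meeting $\Gamma$ and put $J_p=\tau(B_p)$. The injectivity of $f_*\LL\otimes\OO_p\hookrightarrow H^0(X_p,\LL|_p)$ shows $J_p$ is linearly non-degenerate in $\PP^3$, so $\deg J_p\ge 3$ and, since $\deg Y_p=6$, the map $\tau\colon B_p\to J_p$ is either birational or $2$-to-$1$. Because $J_p$ is irreducible and non-degenerate it cannot lie on a reducible or double quadric, so $Q_p$ has rank $\ge 3$ and is either a smooth quadric or a quadric cone, which is the third bullet. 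If $\tau$ is birational, then $p_a(J_p)\ge 4$ by Lemma~\ref{TRIGCFSLEMGENUS}; identifying $Q_p$ with $\FF_0$ or its resolution $\FF_2$ and computing the arithmetic genus from the Hilbert polynomial as in \eqref{TRIGCFSTHMPCME012}--\eqref{TRIGCFSTHMPCME013} (equivalently, by Castelnuovo's bound in $\PP^3$), the constraints $p_a(J_p)\ge4$ and $\deg J_p\le 6$ force $\deg J_p=6$, hence $J_p=Y_p$, so $\tau$ is regular and $Y_p$ is integral along $X_p$. If $\tau$ is $2$-to-$1$, then $J_p$ is a twisted cubic and $Y_p$ is supported on $J_p$ with multiplicity $2$; here $\tau$ is still regular along $X_p$, but reducedness of $Y_p$ must be established separately.

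The crux, as in Theorem~\ref{TRIGCFSTHMPCM}, is excluding the non-reduced case $Y_p=2J_p$ with $J_p$ a twisted cubic. When $Q_p$ is smooth this is quick: $J_p$ has bidegree $(1,2)$, so $2J_p$ has bidegree $(2,4)$ and arithmetic genus $3\ne 4$, contradicting the constancy of $p_a(Y_t)$; equivalently, applying Lemma~\ref{TRIGCFSLEMDOUBLE} inside the smooth threefold $Q$ with $\tfrac12(\tau^*\Lambda.X_0)_q=\deg J_p=3$ forces $\operatorname{mult}_\Gamma\tau^*\Lambda<5$, contradicting $\tau^*\Lambda=6\Gamma$. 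The hard case is a cone fibre, where the class computation gives $\widehat{J}_p\in|B+3F|$ on the resolution $\FF_2$ with $\widehat{J}_p\cdot B=1$, so $J_p$ passes through the vertex and Lemma~\ref{TRIGCFSLEMDOUBLE} does not apply directly. Here I expect the bulk of the work to lie, and I would repeat the argument of Proposition~\ref{TRIGCFSPROP003} with $\FF_3/\FF_1$ replaced by $\FF_2/\FF_0$ and the classes $|3B+5F|$, $|B+4F|$, $8\Gamma$ replaced by $|3B+3F|$, $|B+3F|$, $6\Gamma$: after locally resolving $Q$ near the (finitely many) cone fibres, use Lemma~\ref{TRIGCFSLEMDOUBLE} to force $\Lambda$ to meet $J_p$ at the vertex, reduce to the normal form $Z_0=B+2\widehat{J}_p$, and then invoke the local Lemmas~\ref{TRIGCFSLEM000}, \ref{TRIGCFSLEM001} and~\ref{TRIGCFSLEM002} to contradict $\psi^*Q=6\Gamma$, the transversality of the ruling $G$ and $J_p$ supplying the hypotheses \eqref{TRIGCFSLEM001E002} and \eqref{TRIGCFSLEM002E002}.

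Once $Y_p$ is known to be reduced for every $p$, it is an integral curve of degree $6$ and arithmetic genus $4$ lying on the integral quadric $Q_p$, hence cut out on $Q_p$ by a cubic. By the flatness of $Y\subset Q$ over $C$ the relative divisor class is constant, so $Y_p\in|\OO_W(3)\otimes\OO_{Q_p}|$ for all $p$, which is the last bullet and completes the proof.
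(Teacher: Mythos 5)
Your proposal is correct and follows essentially the same route as the paper's proof: canonical embedding for general $p$, the unique quadric $Q_p$ from the kernel of $\operatorname{Sym}^2 f_*\LL \to f_*(\LL^{\otimes 2})$, Lemma \ref{TRIGCFSLEMGENUS} plus the Hilbert-polynomial computations on $\FF_0$ and $\FF_2$ to force $J_p = Y_p$ in the birational case, a divisor-class/genus contradiction to exclude $Y_p = 2J_p$ on a smooth quadric, and Lemma \ref{TRIGCFSLEMDOUBLE} followed by a resolution of the family of quadrics (via the relative Fano variety of lines, as in Lemma \ref{TRIGCFSLEMCUBICFIB}) and the local Lemmas \ref{TRIGCFSLEM000}--\ref{TRIGCFSLEM002} to exclude it on a cone. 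The only organizational difference is that the paper also extracts a cubic $R$ from the kernel of $\operatorname{Sym}^3 f_*\LL \to f_*(\LL^{\otimes 3})$ so that $Y = Q\cap R$ locally, obtaining $\deg Y_p = 6$ and $Y_p\in|\OO_{Q_p}(3)|$ directly, whereas you recover the last bullet a posteriori from the degree and integrality of $Y_p$, which also works.
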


\begin{proof}
For $p\in C$ general, since $X_p$ is not hyperelliptic, the map
$$
\begin{tikzcd}
\operatorname{Sym}^2 H^0(K_{X_p}) \ar{r} & H^0(2K_{X_p})
\end{tikzcd}
$$
is surjective and hence its kernel has dimension
$$
\dim \operatorname{Sym}^2 H^0(K_{X_p}) - \dim H^0(2K_{X_p}) = 10 - 9 = 1.
$$
And since $Y_p = \tau(X_p)$ is the canonical embedding of $X_p$, there exists a unique quadratic surface $Q_p\subset W_p$ containing $Y_p$. We let
$$
Q = \overline{\bigcup_{X_p \text{ smooth}} Q_p}.
$$
Clearly, $Q$ is a flat family of quadrics over $C$ containing $Y$.

For each $p\in C$, let $B_p$ be the unique irreducible component of $X_p$ such that $B_p\cap \Gamma \ne \emptyset$.
Then $J = \tau(B_p)\subset Y_p\subset Q_p$ is a linearly non-degenerate curve in $W_p\cong \PP^3$. So the quadric $Q_p$ must be integral. That is, it is either a smooth quadric or a cone over a smooth conic curve.

Let $\LL = \OO_X(6\Gamma)$ and let us consider the map
\begin{equation}\label{TRIGCFSE824}
\begin{tikzcd}
\operatorname{Sym}^3 f_* \LL \ar{r} & f_* \LL^{\otimes 3}
\end{tikzcd}
\end{equation}
At a general point $p\in C$, the above map is
$$
\begin{tikzcd}
\operatorname{Sym}^3 f_* \LL \Big|_p \ar{r} \ar[equal]{d}& f_* \LL^{\otimes 3} \Big|_p \ar[equal]{d}\\
\operatorname{Sym}^3 H^0(K_{X_p}) \ar{r} & H^0(3K_{X_p})
\end{tikzcd}
$$
and it is surjective with kernel of dimension
$$
\dim \operatorname{Sym}^3 H^0(K_{X_p}) - \dim H^0(3K_{X_p}) = 20 - 15 = 5.
$$
Then the kernel of \eqref{TRIGCFSE824}, denoted by $\VV$, is a vector bundle of rank $5$ on $C$. At a general point $p\in C$, we can identify
$$
\VV_p \cong H^0(I_{Y_p}(3))
$$
where $I_{Y_p}$ is the ideal sheaf of $Y_p$ in $W_p$. For arbitrary $p\in C$, we have an inclusion
$$
\VV_p \subset H^0(I_{Y_p}(3))
$$
That is, $\PP \VV_p$ is a sub-linear system of cubic surfaces containing $Y_p$. And since $\dim \VV_p = 5$, there exists a cubic surface $R_p\in \PP \VV_p$ such that $Q_p \not \subset R_p$.

Over an affine open set $U\subset C$, we can find $R\in \PP H^0(U, \VV)$ such that $Q_p \not\subset R_p$ for all $p\in U$. We have $Y\subset Q\cap R$ over $U$. And since $Y_p$ is a smooth curve of degree $6$ in $W_p$ for $p\in U$ general, we must have $Y_p = Q_p\cap R_p$ for $p\in U$ general. That is, $Y = Q\cap R$ over $U$. This proves that $Y$ is a local complete intersection and $Y_p \in |\OO_{Q_p}(3)|$ for all $p\in C$.

For each $p\in C$, by the same argument as before,
$\tau$ maps $B_p$ either birationally or $2$-to-$1$ onto $J$.

Suppose that $\tau$ maps $B_p$ birationally onto $J$. Then $p_a(J)\ge 4$ by Lemma \ref{TRIGCFSLEMGENUS}. 

If $Q_p$ is smooth, then $Q_p \cong \PP^1\times \PP^1$. Suppose that $J$ has a curve of type $(a,b)$. Then $p_a(J) = (a-1)(b-1)\ge 4$. And since $J\subset Q_p\cap R_p$, $a,b\le 3$. We must have $a=b=3$. Hence
$J = Y_p$ and $\tau$ is regular along $X_p$.

If $Q_p$ is not smooth, then $Q_p$ is a cone over a smooth conic curve since it is integral. Let $\pi: \widehat{Q}_p\cong \FF_2\to Q_p$ be the minimal resolution of $Q_p$ and let $\widehat{J}\subset \widehat{Q}_p$ be the proper transform of $J$. Suppose that $\widehat{J}\in |aB + bF|$, where $B$ and $F$ are effective generators of $\Pic(\widehat{Q}_p)$ satisfying that $B^2=-2$, $BF=1$ and $F^2 = 0$. Then the Hilbert polynomial of $J$ is
$$
\begin{aligned}
\chi(\OO_J(n)) &= h^0(\OO_{Q_p}(n)) - h^0(\OO_{Q_p}(n) \otimes I_J)\\
&= h^0(\OO_{\widehat{Q}_p}(nB + 2nF)) - h^0(\OO_{\widehat{Q}_p}((n-a)B + (2n-b) F))\\
&= bn - \left(b - \left\lceil \dfrac{b}2 \right\rceil-1\right)
\left(\left\lceil \dfrac{b}2 \right\rceil - 1\right) + 1
\end{aligned}
$$
for $n\gg0$, where $I_J$ is the ideal sheaf of $J$ in $Q_p$. So we have
$$
p_a(J) = \left(b - \left\lceil \dfrac{b}2 \right\rceil-1\right)
\left(\left\lceil \dfrac{b}2 \right\rceil - 1\right)
$$
and hence $b\ge 6$. And since $\deg J = b \le \deg Y_p = 6$, we conclude that $b=6$. Hence $\deg J = \deg Y_p$ and $p_a(J) = p_a(Y_p)$. 
It follows that $J = Y_p$ and $\tau$ is regular along $X_p$.

Suppose that $\tau$ maps $B_p$ $2$-to-$1$ onto $J$. Then $J$ is a cubic rational normal curve in $W_p$. Since $Y_p = Q_p\cap R_p$ is a complete intersection of degree $6$ and it contains $J$ with multiplicity $\ge 2$, we conclude that $J$ is the only component of $Y_p$ and $\tau$ is regular along $X_p$. In conclusion, $\tau$ is regular. 

We claim that this case $Y_p = 2J$ cannot occur, similar to the situation of $g = 5$.

If $Q_p$ is smooth, then $J$ is a curve on $Q_p\cong \PP^1\times \PP^1$ of type $(1,2)$ or $(2,1)$. On the other hand, $Y_p = 2J = Q_p\cap R_p$ is a curve on $Q_p$ of type $(3,3)$. This is obviously impossible.

Suppose that $Q_p$ is singular. Then it is a cone over a smooth conic curve. Let $o$ be its vertex. Clearly, $o\in J$.

Let $\Lambda\in |\OO_W(1)|\cong |\OO_X(6\Gamma)|$ be the unique section. Then $\tau^* \Lambda = 6\Gamma$. So $\Lambda$ meets $J$ at a unique point $q$ such that $\tau: B_p\to J$ is totally ramified over $q$.

If $\Lambda$ does not pass through $o$, then $q\ne o$ and $\Lambda$ is smooth at $q$.
This is impossible by Lemma \ref{TRIGCFSLEMDOUBLE}. So $q=o\in \Lambda$.

Let us ``resolve'' the vertex $o$ of $Q_p$ using the Fano variety of lines on $Q$.
In the same way as we resolve the singularities of a family of rational normal scrolls in Lemma \ref{TRIGCFSLEMCUBICFIB}, we let
$$
S = \big\{(p, [L]): p\in C, [L]\in {\mathbb{G}\text{r}}(W_p, 1), L \subset Q_p\big\}
$$
be the relative Fano variety of lines on $Q$ over $C$ and let
$$
V = \{(p, [L], s): (p, [L])\in S, s\in L\}\subset S\times_C W
$$
be the universal family over $S$.

If all fibers $Q_p$ of $Q/C$ are singular, then $S_p\cong \PP^1$ and $V_p\cong \FF_2$ for all $p\in C$. We have the diagram
\begin{equation}\label{TRIGCFSTHMG36E017}
\begin{tikzcd}
& V\ar{d}{\xi} \ar{r}{\phi} & S \ar{d}{\pi}\\
X \ar[dashed]{ur}{\psi}\ar{r}{\tau} & W \ar{r} & C
\end{tikzcd} 
\end{equation}
where we let $Z = \psi(X)$ be the proper transform of $X$ under $\psi$ and $D = \xi^* \Lambda$.

For $p\in C$ general, $Y_p = Q_p \cap R_p$ is smooth and hence $Y_p$ does not pass through the vertex of $Q_p$. So $Z_p \in |3B + 6F|$, where $B$ and $F$ are effective generators of $\Pic(Q_p) = \Pic(\FF_2)$ satisfying $B^2 = -2$, $BF=1$ and $F^2 = 0$.

For $p\in C$ such that $Y_p = 2J$, we have $\widehat{J} \in |B + 3F|$ for the proper transform 
$\widehat{J}\subset Z$ of $J$ and
\begin{equation}\label{TRIGCFSTHMG36E014}
Z_p = B + 2\widehat{J}.
\end{equation}
Since $\Lambda$ passes through the vertex of $Q_p$, $B\subset D_p$. We must have
\begin{equation}\label{TRIGCFSTHMG36E015}
D_p = B + 2G
\end{equation}
where $G\in |F|$ is the curve passing through the point $B\cap \widehat{J}$. Otherwise, if $D_p$ contains a component $G'\in |F|$ such that $B\cap \widehat{J}\cap G' = \emptyset$, then $\Lambda$ and $J$ will meet at two distinct points.

Note that $B$, $\widehat{J}$ and $G$ intersect transversely pairwise on $V_p$ and
\begin{equation}\label{TRIGCFSTHMG36E016}
Z.D = dB + 6\Gamma_Z
\end{equation}
in $V$ for some $d\in \ZZ^+$, where $\Gamma_Z\subset Z$ is the proper transform of $\Gamma$.
So Lemma \ref{TRIGCFSLEM001} can be applied here to conclude that there is no such $D$.

Suppose that the general fibers of $Q/C$ are smooth. Then $S_p$ is a disjoint union of two smooth rational curves for $p\in C$ general. Replacing $S$ and $V$ by their normalization, we may assume that both $S$ and $V$ are normal. Then 
\begin{itemize}
\item either $S$ is a disjoint union of two components, each being a ruled surface over $C$,
\item or the Stein factorization of $S\to C$ factors through a smooth curve $\widehat{C}\to C$, which is a degree $2$ morphism.
\end{itemize}

If it is the former, we replace $S$ and $V$ each by one of its two components. Then $V$ is a family of ruled surfaces over $C$ with general fibers $\FF_0 = \PP^1\times \PP^1$ and special fibers $\FF_2$. We still have the diagram \eqref{TRIGCFSTHMG36E017} and \eqref{TRIGCFSTHMG36E014}-\eqref{TRIGCFSTHMG36E016} continue to hold. Lemma \ref{TRIGCFSLEM001} still applies.

If it is the latter, $V$ is a family of ruled surfaces over $\widehat{C}$ with general fibers $\FF_0 = \PP^1\times \PP^1$ and special fibers $\FF_2$. We replace $X$, $W$ and $C$ by
$\widehat{X} = X\times_C \widehat{C}$, $\widehat{W} = W\times_C \widehat{C}$ and $\widehat{C}$
in the diagram \eqref{TRIGCFSTHMG36E017}:
\begin{equation}\label{TRIGCFSTHMG36E018}
\begin{tikzcd}
& V\ar{d}{\xi} \ar{r}{\phi} & S \ar{d}{\pi}\\
\widehat{X} \ar[dashed]{ur}{\psi}\ar{r}{\tau} & \widehat{W} \ar{r} & \widehat{C}
\end{tikzcd} 
\end{equation}
We let $Z$ and $D\subset V$ be defined accordingly. Then \eqref{TRIGCFSTHMG36E014}-\eqref{TRIGCFSTHMG36E016} continue to hold and Lemma \ref{TRIGCFSLEM001} still applies.

In conclusion, the existence of $\Lambda\in |\OO_W(1)|$ with $\tau^* \Lambda = 6\Gamma$ implies that $Y_p\ne 2J$. So $Y_p = Q_p\cap R_p$ is an integral curve for all $p\in C$. 
\end{proof}

\begin{proof}[Proof of Theorem \ref{TRIGCFSTHMG36} in the case \eqref{TRIGCFSTHMG36E020}]
By Proposition \ref{TRIGCFSPROP002}, $Y$ is a local complete intersection smooth in codimension one.
So Serre's criterion tells us that $Y$ is normal. And since $\tau: X\to Y$ is finite along $\Gamma$, it is a local isomorphism along $\Gamma$. Therefore, $Y_p$ is smooth at the point $Y_p\cap \Lambda$ for all $p\in C$, where $\Lambda\in |\OO_W(1)| \cong |\OO_X(6\Gamma)|$ is the unique section. And since $Y_p$ is a complete intersection of a quadric and a cubic in $W_p\cong \PP^3$,
\begin{equation}\label{TRIGCFSTHMG36E022}
\omega_{Y_p} = \OO_W(1)\otimes \OO_{Y_p} = \OO_{Y_p}(6q)
\end{equation}
for all $p\in C$, where $q = Y_p\cap \Lambda$. Hence
$$
H^0(\OO_{X_p}(6\Gamma))\cong H^0(\OO_{W_p}(1)) \cong H^0(\OO_{Y_p}(6q))
$$
and we have the identification
\begin{equation}\label{TRIGCFSTHMG36E024}
\begin{aligned}
H^0(\OO_{X_p}(a\Gamma)) &= \big\{A\in H^0(\OO_{W_p}(1)): (A.Y_p)_q \ge 6-a\big\}\\
&= H^0(\OO_{Y_p}(aq))
\end{aligned}
\end{equation}
for all $p\in C$ and $0\le a \le 6$.

Let us consider the rational map
\begin{equation}\label{TRIGCFSTHMG36E025}
\begin{tikzcd}
X\ar[dashed]{r}{\alpha} & S = \PP (f_* \OO_X(a\Gamma))^\vee.
\end{tikzcd}
\end{equation}
We claim that $\alpha$ is a regular generically finite map for either $a = 3$ or $a=4$. This follows from a similar argument for Corollary \ref{TRIGCFSCORPCM}.

First, let us prove
\begin{equation}\label{TRIGCFSTHMG36E023}
h^0(\OO_{X_p}(2\Gamma)) = 1
\end{equation}
for all $p\in C$. Otherwise, by \eqref{TRIGCFSTHMG36E024}, there exists a hyperplane $A\ne \Lambda$ in $W_p$ such that $(A.Y_p)_q \ge 4$ for $q= Y\cap \Lambda$.

If $Q_p$ is smooth, $\Lambda \cap Q_p$ is a curve of type $(1,1)$ in $Q_p \cong \PP^1\times \PP^1$. If $G = \Lambda . Q_p$ is not integral, then
$G = G_1 + G_2$ with $G_1$ and $G_2$ meeting transversely at $q$. Since $G.Y_p = 6q$ on $Q_p$, $G_1.Y_p = 3q$ and $G_2.Y_p = 3q$. This is impossible since $Y_p$ is smooth at $q$ and $G_1$ and $G_2$ meeting transversely at $q$. So $G$ is integral. Hence $A, \Lambda$ and $Q_p$ meet properly in $W_p$. And since $(A.Y_p)_q \ge 4$ and $(\Lambda.Y_p)_q = 6$,
$$
(A.\Lambda.Q_p)_q \ge 4
$$
which is a contradiction since $A\Lambda Q_p = 2$.

Suppose that $Q_p$ is not smooth. Since it is integral, it is a cone over a smooth conic curve. Let $o$ be its vertex. Since $Y_p$ is a Cartier divisor on $Q_p$ and $Y_p$ is smooth at $q$, we see that $q\ne o$.

If $\Lambda$ does not pass through $o$, then $\Lambda\cap Q_p$ is an integral curve. So $A, \Lambda$ and $Q_p$ meet properly in $W_p$. Then the same argument as before shows that
$$
(A.\Lambda.Q_p)_q \ge 4
$$
which is a contradiction since $A\Lambda Q_p = 2$.

Suppose that $\Lambda$ passes through $o$. Then $G = \Lambda.Q_p = L_1 + L_2$ is a union of two lines $L_1$ and $L_2$ passing through $o$. Since $L_1\cap Y_p = L_2\cap Y_p = q\ne o$, $L_1 = L_2 = L$. So $G = 2L$ and $L.Y_p = 3q$ on $Q_p$.

Let $F = A.Q_p$. If $A$ passes through $o$, then $F = L + L'$ for a line $L'\ne L$. But $q\not\in L'$ and hence $(F.Y_p)_q = (L.Y_p)_q = 3$, which contradicts the fact that $(A.Y_p)_q \ge 4$. So $A$ does not pass through $o$ and $L\not\subset F$. Hence $F$ and $L$ meet properly on $Q_p$. And since $(L.Y_p)_q = 3$, $(F.Y_p)_q \ge 4$ and $Y_p$ is smooth at $q$, we must have
$$
(F.L)_q \ge 3
$$
on $Q_p$, which is a contradiction since $FL = 2$ on $Q_p$.

This proves \eqref{TRIGCFSTHMG36E023}. It follows that
\begin{equation}\label{TRIGCFSTHMG36E026}
h^0(\OO_{X_p}(3\Gamma)) = h^0(\OO_{Y_p}(3q)) \le 2 \hspace{12pt} \text{for all } p\in C.
\end{equation}

If $f_* \OO_X(3\Gamma)$ has rank $2$, then \eqref{TRIGCFSTHMG36E023} implies that the map $\alpha$ in \eqref{TRIGCFSTHMG36E025} is a regular generically finite map for $a = 3$.

Suppose that $f_* \OO_X(3\Gamma)$ has rank $1$. Then
\begin{equation}\label{TRIGCFSTHMG36E027}
h^0(\OO_{X_p}(3\Gamma)) = h^0(\OO_{Y_p}(3q)) = 1 \hspace{12pt}
\text{for } p\in C \text{ general}.
\end{equation}

Combining \eqref{TRIGCFSTHMG36E026} and \eqref{TRIGCFSTHMG36E027}, we conclude 
\begin{equation}\label{TRIGCFSTHMG36E028}
h^0(\OO_{X_p}(3\Gamma)) = h^0(\OO_{Y_p}(3q)) = 1 \hspace{12pt} \text{for all } p\in C,
\end{equation}
by Harris' theorem on theta characteristic.
It follows that the map $\alpha$ in \eqref{TRIGCFSTHMG36E025} is a regular generically finite map for $a = 4$.

In conclusion, $\alpha$ is regular and $S$ is a ruled surface for either $a=3$ or $a=4$. Then by the same argument as before,
$$
\Gamma^2 \le \dfrac{1}a \left(-d + \dfrac{\deg K_C}2\right).
$$
Therefore,
\begin{equation}\label{TRIGCFSTHMG36E029}
\begin{aligned}
K_X^2 &= K_X(f^* D + 6\Gamma + V) \ge K_X(f^* D + 6\Gamma)\\
&= 6d + 6K_X \Gamma = 6d + 6\deg K_C - 6\Gamma^2\\
&\ge 6d + 6\deg K_C - \dfrac{6}{a}\left(-d + \dfrac{\deg K_C}2\right)\\
&= \dfrac{6a+6}a d + \dfrac{6a-3}a \deg K_C \ge \dfrac{15}{2} d + \dfrac{21}{4} \deg K_C.
\end{aligned}
\end{equation}
\end{proof}

So we have two lower bounds for $K_X^2$:
In the case \eqref{TRIGCFSTHMG36E020}, we have \eqref{TRIGCFSTHMG36E029}; otherwise, we have
\eqref{TRIGCFSTHMG36E021}. In conclusion, we obtain \eqref{TRIGCFSE408} for $g=4$.


\end{document}